\def \qed { $\hfill \square$ \vskip 8 pt}
\newcommand{\RN}{\mathbb{R}^N}
\newcommand{\R}{\mathbb{R}}
\newcommand{\longto}{\longrightarrow}
\newcommand{\G}{\mathbb{G}}
\newcommand{\g}{\mathfrak{g}}
\newcommand{\e}{\varepsilon }
\newcommand{\LL}{\mathcal{L}}
\renewcommand{\d}{\mathrm{d}}
\newcommand{\de}{\partial}
\newcommand{\lan}{\langle}
\newcommand{\ran}{\rangle}
\renewcommand{\H}{\mathcal{H}}
\newtheorem{theorem}{Theorem}[section]
\newtheorem{lemma}[theorem]{Lemma}
\newtheorem{corollary}[theorem]{Corollary}
\theoremstyle{remark}
\newtheorem{remark}[theorem]{Remark}
\theoremstyle{remark}
\newtheorem{example}[theorem]{Example}
\theoremstyle{theorem}
\theoremstyle{theorem}
\theoremstyle{definition}
\newtheorem{definition}[theorem]{Definition}
\numberwithin{equation}{section}
\begin{document}
\title[Harnack inequality for subelliptic operators]
 {The Strong Maximum Principle and \\ the Harnack inequality for a class of\\
 hypoelliptic divergence-form operators}
 \author{Erika Battaglia}
 \address{Dipartimento di Matematica,
         Universit\`{a} degli Studi di Bologna\\
         Piazza di Porta San Donato, 5 - 40126 Bologna, Italy\\
         Fax.: +39-51-2094490.}
          \email{erika.battaglia3@unibo.it}
 \author{Stefano Biagi}
 \address{Dipartimento di Matematica,
         Universit\`{a} degli Studi di Bologna\\
         Piazza di Porta San Donato, 5 - 40126 Bologna, Italy\\
         Fax.: +39-51-2094490.}
 \email{stefano.biagi3@unibo.it}
 \author{Andrea Bonfiglioli}
 \address{Dipartimento di Matematica,
         Universit\`{a} degli Studi di Bologna\\
         Piazza di Porta San Donato, 5 - 40126 Bologna, Italy\\
         Tel.: +39-51-2094498, Fax.: +39-51-2094490.}
 \email{andrea.bonfiglioli6@unibo.it}

\begin{abstract}
 In this paper we consider a class
 of hypoelliptic second-order partial differential operators $\LL$
 in divergence form on $\RN$, arising from CR geometry and Lie group theory, and we prove
 the Strong and Weak Maximum Principles and the Harnack Inequality for $\LL$.
 The involved operators are not assumed to belong to the H\"ormander hypoellipticity class,
 nor to satisfy subelliptic estimates, nor Muckenhoupt-type estimates on the degeneracy of the
 second order part; indeed our results
 hold true in the infinitely-degenerate case and for operators which are not
 necessarily sums of squares.
 We use a Control Theory result on hypoellipticity in order to
 recover a meaningful geometric information on connectivity and maxima propagation, yet
 in the absence of any H\"ormander condition.
 For operators $\LL$ with $C^\omega$ coefficients, this control-theoretic result
 will also imply a Unique Continuation property for the $\LL$-harmonic functions.
 The (Strong) Harnack Inequality is obtained via the Weak Harnack Inequality
 by means of a Potential Theory argument, and by a crucial use of the Strong Maximum Principle
 and the solvability of the Dirichlet problem for $\LL$ on a basis of the
 Euclidean topology.
\end{abstract}

 \keywords{Degenerate-elliptic operators; Maximum principles; Harnack inequality; Unique continuation; Divergence form operators.}
 \subjclass[2010]{Primary: 35B50, 35B45, 35H20; Secondary: 35J25, 35J70, 35R03}


\maketitle

\section{Introduction and main results}\label{sec:introductionAB}
 Throughout the paper, we shall be concerned with linear second order
 partial differential operators (PDOs, in the sequel), possibly degenerate-elliptic, of the form
\begin{equation}\label{mainLL}
    \LL:=\frac{1}{V(x)}\sum_{i,j=1}^N\frac{\de}{\partial {x_i}}\Big(V(x)\,a_{i,j}(x)\,\frac{\de}{\partial {x_j}}\Big),\qquad
    x\in\RN,
\end{equation}
 where $V$ is a $C^\infty$
 positive function on $\RN$, the matrix $A(x):=(a_{i,j}(x))_{i,j}$
 is symmetric and \emph{positive semi-definite} at every point
 $x\in\RN$, and it has real-valued $C^\infty$ entries. In particular,  $\LL$ is formally self-adjoint
 on $L^2(\RN,\d\nu)$ with respect to the measure $\d\nu(x)=V(x)\,\d x$, which clarifies the r\^ole of $V$.
 We tacitly understand these structural assumptions on $\LL$ throughout.
 The literature on divergence-form operators like \eqref{mainLL} in
 the strictly-elliptic case is so vast that we do no attempt to collect the related references.
 Instead, we mention some papers (relevant for the topics of the present paper) in the degenerate case.

 Degenerate-elliptic operators of the form \eqref{mainLL} were
 extensively studied by Jerison and S\'anchez-Calle in the paper \cite{JerisonSanchez-Calle}
 (under a suitable subelliptic assumption), where it is also described how these PDOs naturally intervene in the study of function theory
 of several complex variables and CR Geometry (see also \cite{FollandStein, Kohn65, RothschildStein}).
 Prototypes for the PDOs
 \eqref{mainLL} also arise in the theory of sub-Laplace operators on real Lie groups
 (e.g., for Carnot groups, \cite{BLUlibro}), as well as in Riemannian Geometry (e.g., the
 Laplace-Beltrami operator has the form ${\sqrt {|g|^{-1}}}\sum
 \partial_i (\sqrt{|g|} g^{ij} \partial_j )$).
 Re\-gu\-la\-ri\-ty issues for degenerate-elliptic divergence-form operators
 comprising the Harnack Ine\-qua\-li\-ty and the Maximum Principles (to which this paper is devoted)
 trace back to the 80's, with the deep investigations by:
 Fabes, Kenig, Serapioni \cite{FabesKenigSerapioni};
 Fabes, Jerison, Kenig \cite{FabesJerisonKenig, FabesKenigJerison};
 Gutiérrez \cite{Gutierrez}.
 In these papers, operators as in \eqref{mainLL} are considered (with $V\equiv 1$) with low regularity assumptions on the coefficients,
 under the hypothesis that the degeneracy of $A(x)$ be controlled on both sides by some Muckenhoupt weight.

 Recent investigations on the Harnack inequality for variational operators, comprising \eqref{mainLL} as a special case,
 also assume Muckenhoupt weights on the degeneracy; see \cite{DeCiccoVivaldi, Zamboni}.
 Very recently, a systematic study of the Potential Theory for the harmonic/subharmonic functions related to operators $\LL$
 as in \eqref{mainLL}
 has been carried out in the series of papers \cite{AbbondanzaBonfiglioli, BattagliaBonfiglioli, BonfLancJEMS, BonfLancTommas},
 under the assumption that $\LL$ possesses a (smooth) global positive fundamental solution.\medskip

 We remark that in the present paper we do not require $\LL$ to be a H\"ormander operator,
 our results holding true in the infinitely-degenerate case as well, nor we make
 any assumption of subellipticity or Muckenhoupt-weighted degeneracy
 (see Example \ref{exa.Lie2}); furthermore, we do not assume the existence of a global
 fundamental solution for $\LL$. Hence our results are not contained in any of the
 aforementioned papers.\medskip

 We now describe the main results of this paper concerning $\LL$, namely the \emph{Strong Maximum Principle}
 and the \emph{Harnack Inequality} for $\LL$; gradually as we need to specify them,
 we introduce the three assumptions under which our theorems are proven.
 As we shall see in a moment, the main hypothesis is a hypoellipticity assumption.\medskip

 In obtaining our main results we are much indebted to the ideas in the pioneering paper
 by Bony, \cite{Bony}, where H\"ormander operators are considered. The main novelty of our
 framework is that we have to renounce to the geometric information encoded in H\"ormander's Rank Condition:
 the latter
 implies a connectivity/propagation property (leading to the Strong Maximum Principle),
 as well as it implies hypoellipticity, due to the well-known H\"ormander's theorem \cite{Hormander}.
 In our setting, the approach is somewhat reversed: hypoellipticity is the main assumption,
 and we need to derive from it some appropriate connectivity and propagation features, even in the absence
 of a maximal rank condition. This will be made possible by exploiting a Control Theory result by Amano \cite{Amano} on hypoelliptic PDOs, as we shall
 describe in detail. Once the Strong Maximum Principle is established, the path to the (Strong) Harnack Inequality
 is traced in \cite{Bony}: we pass through the solvability of the Dirichlet problem, the relevant Green kernel and a
 Weak Harnack Inequality. Finally, the gap between the Weak and Strong Harnack Inequalities is filled by
 an abstract Potential Theory result, due to Mokobodzki and Brelot, \cite{Brelot}.\medskip

 In order to describe our results more closely, we first fix some notation and definition: we say that a linear second order PDO on $\RN$
\begin{equation}\label{Lgenerale}
 L:=\sum_{i,j=1}^N\alpha_{i,j}(x)\frac{\de^2}{\de x_i\de x_j}+
  \sum_{i=1}^N \beta_i(x)\frac{\de}{\de x_i}+\gamma(x)
\end{equation}
  is \emph{non-totally degenerate} at a point $x\in\RN$ if the matrix $(\alpha_{i,j}(x))_{i,j}$ (which will be referred to
 as the principal matrix of $L$)
 is non-vanishing. We observe that the principal matrix of an operator
 $\LL$ of the form \eqref{mainLL} is precisely $A(x)=(a_{i,j}(x))_{i,j}$.
 We also recall that $L$ is said to be
 ($C^\infty$-)hypoelliptic in an open set $\Omega\subseteq\RN$ if, for every $u\in \mathcal{D}'(\Omega)$,
 every open set $U\subseteq \Omega$ and every $f\in C^\infty(U,\R)$, the equation $L u =f$ in $U$
 implies that $u$ is (a function-type distribution associated with) a
 $C^\infty$ function on $U$.

 In the sequel, if $\Omega\subseteq\RN$ is open, we say that $u$
 is \emph{$L$-harmonic} (resp., \emph{$L$-subharmonic})
 in $\Omega$ if $u\in C^2(\Omega,\R)$ and $L u=0$ (resp., $L u\geq 0$) in $\Omega$.
 The set of the $L$-harmonic functions in $\Omega$ will be denoted by $\H_{L}(\Omega)$.
 We observe that, if $L$ is hypoelliptic on every open subset of $\RN$, then
 $\H_{L}(\Omega)\subset C^\infty(\Omega,\R)$; under this hypoellipticity  assumption,
 $\H_{L}(\Omega)$ has important topological properties, which will be crucially used in the sequel (Remark \ref{rem.topolocinfyt}).\medskip

 In order to introduce our first main result we assume the following hypotheses on $\LL$:
 \begin{description}
   \item[(NTD)]
   $\LL$ is \emph{non-totally degenerate at every point of $\RN$}, or equivalently (recalling that $A(x)$ is symmetric
 and positive semi-definite),
\begin{equation}\label{NTD}
 \textrm{trace}(A(x))> 0,\quad \text{for every $x\in \RN$.}
\end{equation}

   \item[(HY)]
   $\LL$ is $C^\infty$-\emph{hypoelliptic in every open subset of $\RN$}.
 \end{description}
 Under these two assumptions we shall prove the \emph{Strong Maximum Principle for $\LL$}.\medskip

 Condition (NTD), if compared with the above mentioned Muckenhoupt-type weights on the degeneracies of $A(x)$,
 does not allow a \emph{simultaneous} vanishing of the eigenvalues of $A(x)$, but it has the advantage
 of permitting a very fast vanishing of the smallest eigenvalue (see Example
 \ref{exa.Lie2}) together with a very fast growing of the largest one (see Example
 \ref{exa.Lie}); both phenomena can happen at an exponential rate (e.g.,
 like $e^{-1/x^2}$ as $x\to 0$ in the first case, and like
 $e^{x}$ as $x\to \infty$ in the second case), which is not allowed when Muckenhoupt weights are involved.

 Meaningful examples of operators satisfying hypotheses (NTD) and (HY), providing prototype PDOs to which our theory applies and a motivation for our
 investigation, are now described
 in the following two examples.
\begin{example}\label{exa.Lie}
 The following PDOs satisfy the assumptions (NTD) and (HY).\medskip

  (a.)\,\, If $\RN$ is equipped with a Lie group structure $\G=(\RN,*)$,
  and if we fix a set $X:=\{X_1,\ldots,X_m\}$ of Lie-generators for the Lie algebra $\g$ of $\G$
  (this means that the smallest Lie algebra containing $X$ is equal to $\g$), then a direct computation shows that
 \begin{equation}\label{subla}
   \LL_X:=-\sum_{j=1}^m X_j^*\, X_j
 \end{equation}
  is of the form \eqref{mainLL}, where $V(x)$ is the density of the Haar measure $\nu$ on $\G$,
  and $(a_{i,j})_{i,j}$ is equal to $S\,S^T$, where $S$ is the $N\times m$ matrix whose columns
  are given by the coefficients of the vector fields $X_1,\ldots,X_m$;
   here $X_j^*$ denotes the (formal) adjoint of $X_j$ in the Hilbert space $L^2(\R^N,\d\nu)$.
  Most importantly, $\LL_X$ in \eqref{subla} satisfies the assumptions (NTD) and (HY) above. Indeed:
  \begin{itemize}
    \item The non-total-degeneracy is a consequence of $X$ being a set of Lie-generators of $\g$.

    \item
   $\LL_X$ is a H\"ormander operator, of the form
    $\sum_{j=1}^m X_j^2+X_0$,
    where $X_0$ is a linear combination (with smooth coefficients) of $X_1,\ldots,X_m$.
    Therefore $\LL_X$ is hypoelliptic due to H\"ormander's Hypoellipticity Theorem, \cite{Hormander},
   jointly with the cited fact that $X$ is a set of Lie-generators of $\g$.
  \end{itemize}
  The density $V$ need not be identically $1$ as for example for the Lie group $(\R^2,*)$, where
  $$(x_1,x_2)*(y_1,y_2)=(x_1+y_1e^{x_2},x_2+y_2), $$
  since in this case $V(x)=e^{-x_2}$. The left-invariant PDO associated with the set of generators
  $X=\{e^{x_2}\frac{\de}{\de x_1},\frac{\de}{\de x_2}\}$ has fast-growing coefficients:
  $$\LL_X=e^{2x_2}\frac{\de^2}{\de x_1^2}+\frac{\de^2}{\de x_2^2}-\frac{\de}{\de x_2}. $$
 Note that the eigenvalues of the principal matrix of $\LL_X$ are $e^{2x_2}$ and $1$, so that the largest eigenvalue
 cannot be controlled (for $x_2>0$) by any integrable weight.\medskip

  (b.)\,\, More generally (arguing as above), if $X=\{X_1,\ldots,X_m\}$ is a family of smooth vector fields in $\RN$
  satisfying H\"ormander's Rank Condition, if $\d\nu(x)=V(x)\,\d x$ is the Radon measure associated
  with any positive smooth density $V$ on $\RN$, then the operator
  $-\sum_{j=1}^m X_j^*\, X_j$
    is of the form \eqref{mainLL} and it satisfies (NTD) and (HY).
    Here $X_j^*$ denotes the formal adjoint of $X_j$ in $L^2(\R^N,\d\nu)$.
    As already observed, PDOs of this form naturally arise in CR Geometry and in the function theory of
    several complex variables (see \cite{JerisonSanchez-Calle}).
\end{example}
 The above examples show that geometrically meaningful
 PDOs belonging to the class of our concern actually fall in the hypoellipticity class
 of the H\"ormander operators. Nonetheless, hypotheses (NTD) and (HY) are
 general enough to comprise \emph{non-H\"ormander} and \emph{non-subelliptic} PDOs, as it is shown in the next example.
 Applications to this kind of \emph{infinitely-degenerate} PDOs
 also furnish one of the main motivation for our study.
\begin{example}\label{exa.Lie2}
 Let us consider the class of operators in $\R^2$ defined by
\begin{subequations}
\begin{equation}\label{fediiOP}
 \LL_a=\frac{\de^2}{\de x_1^2}+\Big(a(x_1)\,\frac{\de}{\de x_2}\Big)^2,
\end{equation}
 with $a\in C^\infty(\R,\R)$, $a$ even, nonnegative, nondecreasing on $[0,\infty)$ and vanishing only at $0$. Then
 $\LL_a$ satisfies (NTD) (obviously) and (HY), thanks to a result
 by Fedi$\breve{\textrm{\i}}$, \cite{Fedii}. Note that $\LL_a$ does not satisfy H\"ormander's Rank Condition
 at $x_1=0$ if all the derivatives of $a$ vanish at $0$, as for
 $a(x_1)=\exp(-1/x_1^2)$. Other examples of operators satisfying our assumptions
 (NTD) and (HY) but failing to be H\"ormander operators
  can be found, e.g., in the following papers: Bell and Mohammed \cite{BellMohammed}; Christ \cite[Section 1]{Christ};
  Kohn \cite{Kohn}; Kusuoka and Stroock \cite[Theorem 8.41]{KusuokaStroock};
  Morimoto \cite{Morimoto}. Explicit examples are, for instance,
  \begin{align}
   &\frac{\de^2}{\de x_1^2}+\Big(\exp(-1/|x_1|)\,\frac{\de}{\de x_2}\Big)^2+ \Big(\exp(-1/|x_1|)\,\frac{\de}{\de x_3}\Big)^2&&\quad \text{in $\R^3$,}\label{christOP}\\
   &\frac{\de^2}{\de x_1^2}+\Big(\exp(-1/\sqrt{|x_1|})\,\frac{\de}{\de x_2}\Big)^2+ \frac{\de^2}{\de x_3^2}&&\quad \text{in $\R^3$,}\label{kusuokastroockOP}\\
   &\frac{\de^2}{\de x_2^2}+\Big(x_2\,\frac{\de}{\de x_1}\Big)^2+
   \frac{\de^2}{\de x_4^2}+\Big(\exp(-1/\sqrt[3]{|x_1|})\,\frac{\de}{\de x_3}\Big)^2&&\quad \text{in $\R^4$}\label{morimotoOP}.
 \end{align}
\end{subequations}
 For the hypoellipticity of \eqref{christOP} see \cite{Christ}; for \eqref{kusuokastroockOP} see \cite{KusuokaStroock}; for \eqref{morimotoOP} see \cite{Morimoto}.
 Later on, in proving the Harnack Inequality, we shall add another hypothesis to (NTD) and (HY) and, as we shall show,
 the operators from \eqref{fediiOP} to \eqref{morimotoOP} (and those in Example \ref{exa.Lie}) will fulfil this assumption as well. Hence the main results of this paper
 (except for the Unique Continuation result in Section \ref{sec:Harnack_analytic}, proved for operators with $C^\omega$ coefficients) fully apply to
 these PDOs.

 Moreover, since the PDOs \eqref{fediiOP}-to-\eqref{morimotoOP} \emph{are not subelliptic} (see Remark \ref{rem.equivHYeps}),
 they do not fall in the class considered by Jerison and S\'anchez-Calle in \cite{JerisonSanchez-Calle}.
 Finally, note that the smallest eigenvalue in all the above examples vanishes
 very quickly (like $\exp(-1/|x|^\alpha)$ for $x\to 0$, with positive $\alpha$) and it cannot be bounded
 from below by any weight $w(x)$ with locally integrable reciprocal function.
 \end{example}
 Our first main result under conditions (NTD) and (HY) is the following one.
\begin{theorem}[\textbf{Strong Maximum Principle for $\LL$}]\label{th:SMP}
 Suppose that $\LL$ is an operator of the form
 \eqref{mainLL}, with $C^\infty$ coefficients $V>0$ and $(a_{i,j})_{i,j}\geq 0$,
 and that it satisfies \emph{(NTD)} and \emph{(HY)}. Let
 $\Omega\subseteq\RN$ be a connected open set. Then,
 the following facts hold.
\begin{enumerate}
  \item[\emph{(1)}]  Any function
 $u\in C^2(\Omega,\R)$ satisfying $\LL u\geq 0$ on $\Omega$
 and attaining a maximum in $\Omega$ is constant
 throughout $\Omega$.\medskip

  \item[\emph{(2)}]
  If $c\in C^\infty(\RN,\R)$ is nonnegative on $\RN$, and if we set
 \begin{equation}\label{LC}
    \LL_c:=\LL-c,
 \end{equation}
 then any function
 $u\in C^2(\Omega,\R)$ satisfying $\LL_c u\geq 0$ on $\Omega$ and
 attaining a nonnegative maximum in $\Omega$ is constant
 throughout $\Omega$.
\end{enumerate}
\end{theorem}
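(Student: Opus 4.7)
I would follow the Bony-type strategy, replacing the (absent) H\"ormander rank condition with the control-theoretic consequence of hypoellipticity due to Amano \cite{Amano}. I treat (1) in detail; part (2) will follow by the same scheme with a minor adjustment exploiting $M\geq 0$ and $c\geq 0$. Set $M:=\max_{\Omega}u$ and $F:=\{x\in\Omega:u(x)=M\}$. Since $F$ is nonempty and closed in the connected open set $\Omega$, it suffices to show that $F$ is open. I fix $x_0\in F$ and aim at producing an Euclidean neighbourhood of $x_0$ sitting inside $F$.

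The first ingredient is a local \emph{propagation principle} in the spirit of \cite{Bony}: if $Y=\sum y_i\,\partial_{x_i}$ is a smooth vector field on $\RN$ which is subunit for $\LL$ near $x_0$ (meaning $y(x)\,y(x)^T\leq C\,A(x)$ for some $C>0$ and all $x$ in a neighbourhood of $x_0$), then the integral curve $\gamma$ of $Y$ issuing from $x_0$ is contained in $F$ as long as it stays in $\Omega$. The proof would reduce to a one-dimensional Hopf-type argument for $v(t):=u(\gamma(t))\leq M=v(0)$, comparing $v''(t)$ (which can be written through $y\,y^T\cdot\nabla^2 u$) with the positive-semi-definite principal part $A\cdot\nabla^2 u$ of $\LL u\geq 0$; the subunit inequality lets one absorb the former into the latter. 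The first-order drift generated by the divergence form \eqref{mainLL} contributes a linear term in $v'(t)$ that is handled by the same argument, while for part (2) the additional zero-order term $-c(\gamma(t))\,v(t)$ has the right sign thanks to $M\geq 0$ and $c\geq 0$, so that the Hopf argument applies unchanged.

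The second and central ingredient is \emph{Amano's theorem}: under (NTD) and (HY) on $\LL$, the accessibility orbit of $x_0$ obtained by concatenating integral curves (in positive and negative time) of the smooth vector fields which are subunit for $\LL$ contains an open Euclidean neighbourhood of $x_0$. This is the substitute for the H\"ormander rank condition: even in the absence of any bracket-generating assumption, hypoellipticity alone forces the propagation system to sweep out an open set around each point. Iterating the propagation principle along such a concatenation shows that the whole neighbourhood is contained in $F$; hence $F$ is open, and by connectedness $F=\Omega$, proving (1) and, through the sign bookkeeping mentioned above, also (2).

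The \emph{main obstacle} I foresee is the interface between Bony's propagation principle and Amano's control theorem: one must identify a precise class of admissible vector fields for which, simultaneously, (i) Amano's accessibility result applies (so that the orbit through $x_0$ contains a Euclidean neighbourhood), and (ii) the propagation principle holds along their integral curves. A subtler technical point, specific to the divergence form of $\LL$, is to ensure that the first-order drift with coefficients $V^{-1}\,\partial_j(V\,a_{i,j})$ is dominated, along the curve, by quantities controllable via $A$ and $Y$, so that the one-dimensional differential inequality for $v$ indeed closes up and the Hopf-type conclusion $v\equiv M$ is reached.
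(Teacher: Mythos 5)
Your architecture is essentially the paper's: Hopf-type barrier, propagation of the maximum set along vector fields attached to the principal matrix, and Amano's controllability theorem as the substitute for H\"ormander's rank condition, with the drift absorbed thanks to the divergence structure. However, there is a genuine gap in the step you treat as routine: the proposed proof of the propagation principle by a \emph{one-dimensional} Hopf argument for $v(t):=u(\gamma(t))$ does not work. From $\LL u\geq 0$ you only get $\mathrm{trace}\bigl(A(x)\nabla^2u(x)\bigr)+\{\text{first order}\}\geq 0$, and the subunit inequality $y\,y^T\leq C\,A$ lets you compare $\mathrm{trace}\bigl(y\,y^T\nabla^2u\bigr)$ with $\mathrm{trace}\bigl(A\,\nabla^2u\bigr)$ only when $\nabla^2u$ has a sign (as it does at an interior maximum, which is exactly how the contradiction \eqref{fine.hopf} is closed in the paper's Hopf lemma). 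Along a general point of the integral curve $\gamma$ the Hessian has no sign, the trace against $A$ involves directions transverse to $\gamma$, and no pointwise differential inequality for $v$, $v'$, $v''$ follows; so the ``absorption'' step fails and propagation along a single curve cannot be obtained by an ODE comparison on that curve alone. One needs barriers in a full neighbourhood of the maximum set, i.e.\ the exterior-ball mechanism.

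This is precisely how the paper proceeds, and it also settles the ``main obstacle'' you flag about the drift. Writing $\LL u=\sum_i\de_i(X_iu)+X_0u$ with $X_i=\sum_j a_{i,j}\de_j$ (the rows of $A$), the Hopf-type Lemma \ref{lem_Hopf} gives $\langle A(y)\nu,\nu\rangle=0$ whenever $\nu\bot F(u)$ at $y$ (see \eqref{Hopf.ort}); the Cauchy--Schwarz bound \eqref{daprovSMP1} for positive semi-definite $A$ then yields $\langle X_i(y),\nu\rangle=0$, i.e.\ tangentiality of each $X_i$ (and of any subunit field) to $F(u)$; Bony's invariance theorem (\cite[Th\'eor\`eme 2.1]{Bony}), which is the nontrivial replacement for your 1-D argument, converts tangentiality into invariance of the closed set $F(u)$ under the flows; finally Amano's controllability (\cite[Theorem 2]{Amano}) of the system generated by $X_0,X_1,\ldots,X_N$ gives $F(u)=\Omega$. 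Note that Amano's statement concerns these specific fields, so you must verify propagation exactly for them; the drift causes no extra difficulty because of the identity $X_0=\sum_i(\de_iV/V)\,X_i$ in \eqref{X0lincopmbxi}, which makes its tangentiality automatic. Your sign bookkeeping for part (2) ($c\geq0$, $M\geq0$) is correct and is carried out inside the Hopf lemma in the paper.
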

 \noindent The r\^ole of the nonnegativity of the zero-order term $c$ in the above statement (2)
 in obtaining Strong Maximum Principles is well-known (see e.g., Pucci and Serrin \cite{PucciSerrin}).
\begin{remark}\label{rem.WMPvale}
 (a.)\,\, Obviously, the Strong Maximum Principle (SMP, shortly) in Theorem \ref{th:SMP} will immediately provide
 the \emph{Weak} Maximum Principle (WMP, shortly) for operators $\LL$ and $\LL-c$, for any nonnegative zero-order term $c$ (and any bounded
 open set $\Omega$), see
 Corollary \ref{th.WMPPP} for the precise statement.\medskip

 (b.)\,\, We will show that, in order to obtain the SMP and WMP for $\LL-c$,
 it is also sufficient to replace the hypothesis on the hypoellipticity of $\LL$ with the (more natural hypothesis of the) hypoellipticity of $\LL-c$,
 still under assumption (NTD) and the divergence-form structure of $\LL$; see Remark \ref{PMFancheconc2} for the precise result.
\end{remark}
 Our proof of the SMP in Theorem \ref{th:SMP} follows a rather classical scheme, in that it rests
 on a Hopf Lemma for $\LL$ (see Lemma \ref{lem_Hopf}). However, the passage
 from the Hopf Lemma to the SMP is, in general, non-trivial and the same is true in our framework.
 For example, in the paper \cite{Bony} by Bony, where
 H\"ormander operators are considered, this passage is accomplished by means of a maximum propagation
 principle, crucially based on H\"ormander's Rank Condition, the latter ensuring a connectivity property
 (the so-called \emph{Chow's Connectivity Theorem} for H\"ormander vector fields).
 The novelty in our setting is that, since hypotheses (NTD) and (HY)
 do \emph{not} necessarily imply that $\LL$ is a H\"ormander operator (see for instance Example
 \ref{exa.Lie2}), we have to supply for a lack of geometric information.
 Due to this main novelty, we describe more closely our argument in deriving the SMP.

 As anticipated, we are able to supply the lack of H\"ormander's Rank Condition by using
 a notable control-theoretic property (seemingly long-forgotten in the PDE literature), encoded in the hypoellipticity assumption (HY), proved by Amano in
 \cite{Amano}: indeed,
  thanks to the hypothesis (NTD), we are entitled to use
  \cite[Theorem 2]{Amano} which states that
  (HY) ensures the \emph{controllability} of the ODE system
\begin{equation*}
 \dot \gamma= \xi_0 X_0(\gamma)+\sum_{i=1}^N \xi_i X_i(\gamma),\qquad (\xi_0,\xi_1,\ldots,\xi_N)\in \R^{1+N},
\end{equation*}
 on every open and connected subset of $\RN$. Here $X_1,\ldots,X_N$
 denote the vector fields associated with the rows of the principal matrix of $\LL$, whereas $X_0$
 is the drift vector field obtained by writing $\LL$ (this being always possible) in the form
 $$\LL u= \sum_{i=1}^N  \frac{\de}{\de x_i}( X_iu)+ X_0u.$$
 By definition of a controllable system, Amano's controllability result provides another
 geometric \emph{con\-nec\-ti\-vi\-ty property} (a substitute for Chow's Theorem): any couple of points
 can be joined by a continuous path which is piece-wise an
 integral curve of some vector field $Y$ be\-long\-ing to $\textrm{span}_\R\{X_0,X_1,\ldots,X_N\}$.
 The SMP will then follow if we show that there is a pro\-pa\-ga\-tion
 of the maximum of any $\LL$-subharmonic function $u$ along all integral curves $\gamma_Y$ of
 every  $Y\in \textrm{span}_\R\{X_0,X_1,\ldots,X_N\}$.
 In other words, we need to show that if the set $F(u)$
 of the maximum points of $u$ intersects any such $\gamma_Y$, then $\gamma_Y$
 is wholly contained in $F(u)$: briefly, if this happens we say that $F(u)$
 is $Y$-invariant. In its turn, this $Y$-invariance property
 can be characterized (see Bony, \cite[\S 2]{Bony})
 in terms of a tangentiality property of $Y$ with respect to $F(u)$
 (the  reader is referred to Section \ref{sec:SMP} below for this notion of
 tangentiality).

 Now, the self-adjoint structure of our PDO $\LL$ in \eqref{mainLL}
 ensures that $X_0$ is a linear combination with smooth coefficients of
 $X_1,\ldots,X_N$. Hence, by the very definition of tangentiality
 (see e.g., \eqref{daprovSMP1primaa}),
 the tangentiality of $X_0$ w.r.t.\,$F(u)$
 will be inherited from the tangentiality of $X_1,\ldots,X_N$ w.r.t.\,$F(u)$.
 By means of the above argument of controllability/propagation, this allows us
 to reduce the proof of the SMP to showing that
 any of the vector fields $X_1,\ldots,X_N$ is tangent to $F(u)$.
 Luckily, this tangentiality is a consequence of the choice of $X_1,\ldots,X_N$ as deriving from the rows
 of the principal matrix of $\LL$, together with the Hopf-type Lemma \ref{lem_Hopf}
 for $\LL$. This argument is provided, in all detail, in Section \ref{sec:SMP}.\medskip

 The use of the above ideas, plus the classical Holmgren's Theorem, will allow us to prove that,
 when $\LL$ has real-analytic coefficients, a \emph{Unique Continuation} result holds true for $\LL$:
 any $\LL$-harmonic function defined on a connected open set $U$ which
 vanishes on some non-void open subset is necessarily null on the whole of $U$ (see Theorem
 \ref{th:Uniquecont}).   We observe that the $C^\omega$ assumption is satisfied, for example,
  if $\LL$ is a left invariant operator on a Lie group (e.g., a sub-Laplacian on a Carnot group, as in
  \cite{BLUlibro}), since, as it is well-know, any Lie group can be endowed with a compatible $C^\omega$ structure.\medskip
\begin{remark}
 We explicitly remark that, as it is proved by Amano in \cite[Theorem 1]{Amano}, the above
 controllability property ensures the validity of the H\"ormander Rank Condition only
 on an open \emph{dense}  subset of $\RN$
 which may fail to coincide with the whole of $\RN$. This actual possible
 lack of the H\"ormander Rank Condition is clearly exhibited in
 Example \ref{exa.Lie2} (of non-H\"ormander operators which nonetheless satisfy our assumptions
 (NTD) and (HY), and hence the SMP).

 To the best of our knowledge, Amano's controllability result for hypoelliptic
 non-totally-degenerate operators has been long forgotten in the literature; only recently,
 it has been used by the third-named author and B.\,Abbondanza \cite{AbbondanzaBonfiglioli}
 in studying the Dirichlet problem for $\LL$,
 and in obtaining Potential Theoretic
 results for the harmonic sheaf related to $\LL$.
\end{remark}

 In order to give the second main result of the paper (namely, the \emph{Harnack
 Inequality} for $\LL$), we shall need a further assumption, very similar to (HY)
 (and, indeed, equivalent to it in many important cases), together with some technical results
 on the solvability of the Dirichlet problem related to $\LL$. Our next assumption is the following one:\medskip
 \begin{description}
   \item[$\textrm{(HY)}_\e$]
   \emph{There exists $\e>0$ such that
   $\LL-\e$ is $C^\infty$-hypoelliptic in every open subset of $\RN$}.\medskip
 \end{description}
 For operators $\LL$ satisfying hypotheses (NTD), (HY) and (HY)$_\e$ we are able to prove the
 Harnack Inequality (see Theorem \ref{lem.crustimabassoTHEOFORTE}).\medskip

 We postpone the description of the relationship between assumptions (HY) and $\textrm{(HY)}_\e$
 (and their actual equivalence for large classes of operators: for subelliptic PDOs, for instance)
 in Remark \ref{rem.equivHYeps} below.
 Instead, we anticipate the r\^ole of the perturbation $\LL-\e$
 of the operator $\LL$: this is motivated by a crucial comparison argument
 (which we generalize to our setting), due to Bony \cite[Proposition 7.1, p.298]{Bony},
 giving the lower bound
\begin{equation}\label{lowboundkeps}
 u(x_0)\geq \e \int_{\Omega} u(y)\,k_\e(x_0,y)\,V(y)\,\d y\qquad \forall\,x_0\in\Omega,
\end{equation}
 for every nonnegative $\LL$-harmonic function $u$ on the open set $\Omega$ which possesses
 a Green kernel $k_{\e}(x,y)$ relative to the perturbed operator $\LL-\e$
 (see Theorem \ref{th.greeniani} for the notion of a Green kernel, and see
 Lemma \ref{lem.crustimabasso} for the proof of \eqref{lowboundkeps}).
 This lower bound, plus some topological facts on hypoellipticity, is the key ingredient for a \emph{Weak} Harnack Inequality
 related to $\LL$, as we shall explain shortly.

 Some remarks on assumption (HY)$_\e$ are now in order.
\begin{remark}\label{rem.equivHYeps}
 Hypothesis (HY)$_\e$ is implicit in hypothesis (HY) for notable classes of operators, whence
 our assumptions for the validity of the Harnack Inequality for $\LL$ reduce to (NTD) and (HY) solely: namely,
 \emph{(HY) implies (HY)$_\e$ in the following cases:}\medskip

 \begin{itemize}
 \item for H\"ormander operators, and, more generally, for second order \emph{subelliptic} operators
 (in the usual sense of fulfilling a subelliptic estimate,
 see e.g., \cite{JerisonSanchez-Calle, Kohn});
 indeed, any operator $L$ in these
 classes of PDOs is hypoelliptic (see H\"ormander \cite{Hormander}, Kohn and Nirenberg \cite{KohnNirenberg}),
 and $L$ still belongs to these classes after the addition of a smooth zero-order term;\medskip

 \item for operators with \emph{real-analytic coefficients.}
 Indeed, in the $C^\omega$ case, one can apply known results by Ole\u{\i}nik and Radkevi\v{c} ensuring that,
 for a general $C^\omega$ operator $L$ as in \eqref{Lgenerale}, hypoellipticity is equivalent
 to the verification of H\"ormander's Rank Condition for the vector fields $X_0,X_1,\ldots,X_N$ obtained by
 rewriting $L$ as $\sum_{i=1}^N \de_i(X_i)+X_0+\gamma$; this
 condition is clearly invariant under any change of the zero-order term $\gamma$ of $L$ so that
 (HY) and (HY)$_\e$ are indeed equivalent.\medskip

 \end{itemize}
 The problem of establishing, in general, whether (HY) implies (HY)$_\e$ seems non-trivial
 and it is postponed to future investigations.\footnote{It appears that having some quantitative information on the loss of derivatives
 may help in facing this question (personal communication by A. Parmeggiani).} In this regard we recall that, for example,
 in the complex coefficient case the presence of a zero-order term (even a small $\varepsilon$)
 may drastically alter hypoellipticity (see for instance the example given by Stein in \cite{Stein}).

 We explicitly remark that the operators
 \eqref{fediiOP}-to-\eqref{morimotoOP} are \emph{not} subelliptic (nor $C^\omega$), yet they satisfy
 hypotheses (NTD), (HY) and (HY)$_\e$. The lack of subellipticity  is a consequence of the
 characterization of the subelliptic PDOs due to Fefferman and Phong \cite{FeffermanPhong2, FeffermanPhong}
 (see also \cite[Prop.1.3]{Kohn} or \cite[Th.2.1 and Prop.2.1]{JerisonSanchez-Calle},
 jointly with the presence of a coefficient with a zero of infinite order
 in \eqref{fediiOP}-to-\eqref{morimotoOP}). The second assertion concerning the verification
 of (HY)$_\e$ (the other hypotheses being already discussed) derives from the following result by Kohn, \cite{Kohn}:
 any operator of the form
 $$L_1+\lambda(x)\,L_2\quad \text{in $\R^n_x\times \R^m_y$}$$
 is hypoelliptic, where $\lambda\in C^\infty(\R_x)$, $\lambda\geq 0$ has
 a zero of infinite order at $0$
 (and no other zeroes of infinite order), and $L_1$ (operating in $x\in\R^n$)
 and $L_2$ (operating in $y\in \R^m$) are general second order PDOs
 (as in \eqref{Lgenerale}) with smooth coefficients and they are assumed to be subelliptic.
 It is straightforward to recognize that by subtracting $\e$ to any PDO in
 \eqref{fediiOP}-to-\eqref{morimotoOP} we get an operator of the form
 $(L_1-\e)+\lambda(x)\,L_2$, where $\lambda$ has the required features, $L_2$ is uniformly elliptic (indeed, a classical Laplacian in all the
 examples), and $L_1-\e$ is
 a uniformly elliptic operator (cases \eqref{fediiOP}-to-\eqref{kusuokastroockOP})
 or it is a H\"ormander operator (case \eqref{morimotoOP}).
\end{remark}
  Before describing the approach to the Harnack Inequality, inspired by the ideas in \cite{Bony}, we
  state the main needed technical tools on the solvability of the Dirichlet problem
  for $\LL$ and for the perturbed operator $\LL-\varepsilon$.
\begin{lemma}\label{th.localDiri}
 Suppose that $\LL$ is an operator of the form
 \eqref{mainLL}, with $C^\infty$ coefficients $V>0$ and $(a_{i,j})\geq 0$,
 and that $\LL$ satisfies \emph{(NTD)}.
 Let $\e\geq 0$ be fixed (the case $\e=0$ being admissible).
 We set $\LL_\e:=\LL-\e$ and we assume that $\LL_\e$ is hypoelliptic on every open
 subset of $\RN$.\medskip

 Then, there exists a basis for the Euclidean topology of $\RN$, independent of $\e$,
 made of open and connected sets
 $\Omega$ (with Lipschitz boundary) with the following properties:
 for every continuous function $f$ on $\overline{\Omega}$ and for every continuous
 function $\varphi$ on $\de\Omega$, there exists one and only one solution
 $u\in C(\overline{\Omega},\R)$ of the Dirichlet problem
 \begin{gather}\label{DIRIEQ}
    \left\{
      \begin{array}{ll}
        \LL_\e u=-f & \hbox{on $\Omega$\quad  (in the weak sense of distributions),} \\
        u=\varphi & \hbox{on $\de\Omega$\quad (point-wise).}
      \end{array}
    \right.
 \end{gather}
 Furthermore, if $f,\varphi\geq 0$ then $u\geq 0$ as well. Finally, if $f$ belongs to $C^\infty(\Omega,\R)
 \cap C(\overline{\Omega},\R)$, then the same is true of $u$, and $u$
is a classical solution of \eqref{DIRIEQ}.
\end{lemma}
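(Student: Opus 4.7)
The plan is to follow closely the scheme used by Bony in \cite[Sec.\,5]{Bony}, adapting it to our divergence-form setting and to the mildly weaker assumption that only the trace of $A(x)$ (rather than the full H\"ormander condition) is positive. Uniqueness is the easy part: if $u_1,u_2$ are two solutions of \eqref{DIRIEQ}, their difference $w$ is $\LL_\e$-harmonic in $\Omega$ (in the distributional sense) and vanishes pointwise on $\partial\Omega$; by hypoellipticity of $\LL_\e$, $w\in C^\infty(\Omega)$, and then the Weak Maximum Principle for $\LL_\e$ (available under (NTD) and the hypoellipticity of $\LL_\e$, as discussed in Remark \ref{rem.WMPvale} and in particular Remark \ref{PMFancheconc2} for the perturbed operator) forces $w\equiv 0$. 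As nonnegativity $u\geq 0$ when $f,\varphi\geq 0$ is just one more application of the same WMP to $-u$, the whole content of the lemma reduces to \emph{existence} with continuous boundary attainment and interior smoothness.

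For the basis, I would take a sufficiently fine family of open Euclidean balls $B=B(x_0,r)$, which are visibly connected and have smooth (hence Lipschitz) boundary, and which form a basis of the Euclidean topology independent of $\e$. On such a $B$ I would produce a weak solution by the variational method applied to the symmetric, continuous, coercive bilinear form
\[
 a_\e(u,v):=\int_B \langle A(x)\nabla u,\nabla v\rangle\,V(x)\,\d x+\e\int_B uv\,V(x)\,\d x
\]
on the weighted Sobolev space $H^1_0(B,V\d x)$ (coercivity being automatic when $\e>0$, and following from the Poincar\'e inequality on the bounded set $B$ when $\e=0$, since $V$ is bounded above and below on $\overline{B}$). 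After subtracting off a suitable Lipschitz extension of $\varphi$ to $\overline B$, Lax--Milgram produces a unique weak solution $u$ of $\LL_\e u=-f$ with $u-\varphi\in H^1_0$. The assumption that $\LL_\e$ is hypoelliptic in every open subset of $\RN$ then immediately upgrades $u$ to $C^\infty(B)$ when $f\in C^\infty(B)\cap C(\overline B)$, and in any case yields $u\in C^\infty(B)$.

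The genuine difficulty, and the main obstacle, is the continuous attainment of the boundary datum at every $y\in\partial B$; this is where Bony's technique has to be generalized to the infinitely-degenerate, non-H\"ormander case. The plan is to construct, at every boundary point $y\in\partial B$, a local barrier $w\in C^2$ with $w(y)=0$, $w>0$ on $\overline B\setminus\{y\}$ near $y$, and $\LL_\e w\leq -1$ in a neighborhood of $y$. The key point is that, by hypothesis (NTD), $\mathrm{trace}(A(y))>0$, so some eigenvalue of $A(y)$ is strictly positive. Picking a point $z$ outside $\overline B$ close to $y$ and trying a barrier of the Bony--Hopf type, e.g.\ $w(x)=e^{-\alpha|x-x_0|^2}-e^{-\alpha r^2}$ or $w(x)=1-e^{-\alpha(|x-z|^2-|y-z|^2)}$, one gets, after expanding $\LL_\e w$, a leading term proportional to $\alpha^2\,\langle A(x)(x-z),(x-z)\rangle\, e^{-\alpha(\cdots)}$ which, by continuity of $A$ and the positivity of $\langle A(y)(y-z),(y-z)\rangle$ for a suitably chosen $z$ (dictated by an eigenvector of $A(y)$ with positive eigenvalue), beats the lower-order drift and zero-order contributions for $\alpha$ large. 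The same $\alpha$ works uniformly on a small enough piece of $\partial B$ near $y$, and a compactness argument on $\partial B$ yields a family of barriers covering all boundary points. Perron's method (or the classical \emph{comparison with barriers} procedure applied to the variational solution already produced) then gives $\lim_{x\to y}u(x)=\varphi(y)$ for every $y\in\partial B$. The choice of $r$ small enough relative to the local oscillation of $A$ is what ensures that this barrier construction succeeds on every ball of the basis and independently of the parameter $\e\geq 0$.
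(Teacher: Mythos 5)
Your reduction of the lemma to existence (uniqueness and the sign property via the WMP of Remark \ref{PMFancheconc2} after using hypoellipticity to get interior smoothness) is fine and matches the paper. The existence scheme, however, has two genuine gaps, and the first one is exactly the point the paper is careful about. You take the basis to consist of arbitrary Euclidean balls $B=B(x_0,r)$ and claim a barrier at every $y\in\de B$ by ``picking $z$ dictated by an eigenvector of $A(y)$ with positive eigenvalue''. This cannot work: for the comparison argument the barrier $w$ must be $\leq 0$ on all of $\overline B$ with equality only at $y$, which forces $\overline{B(z,|y-z|)}$ to be an exterior ball touching $\overline B$ only at $y$; for a Euclidean ball the direction $y-z$ is then necessarily the \emph{radial} direction $(y-x_0)/r$, so you have no freedom to align it with a non-degenerate eigendirection of $A(y)$. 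Hypothesis (NTD) only gives $\mathrm{trace}\,A(y)>0$, not $\lan A(y)(y-x_0),(y-x_0)\ran>0$: e.g.\ for $\LL_a=\de^2_{x_1}+(a(x_1)\de_{x_2})^2$ of Example \ref{exa.Lie2}, at the poles $y=x_0\pm r e_2$ of any ball centred on $\{x_1=0\}$ one has $\lan A(y)\nu,\nu\ran=a(0)^2=0$ in the radial direction, the $\lambda^2$ term in $\LL_\e w(y)$ vanishes, and no choice of $\lambda$ makes $w$ a barrier; such characteristic boundary points of a ball need not be regular at all. This is precisely why the paper (Lemma \ref{lem.regol}, following Bony) does \emph{not} use balls but thin lens-shaped sets $B(x_0+\e^{-1}h_0,\e^{-1}+\e^2)\cap B(x_0-\e^{-1}h_0,\e^{-1}+\e^2)$, where $h_0$ is a fixed direction with $\lan A(x_0)h_0,h_0\ran>0$: on these sets every exterior normal stays close to $\pm h_0$, so \emph{every} boundary point admits an exterior ball in a non-characteristic direction, and the Hopf-type barrier goes through uniformly.

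The second gap is the Lax--Milgram step. The form $a_\e(u,v)=\int_B\lan A\nabla u,\nabla v\ran V+\e\int_B uvV$ is \emph{not} coercive on $H^1_0(B,V\d x)$: since $A$ is only positive semi-definite (with smallest eigenvalue possibly vanishing to infinite order), $a_\e(u,u)$ does not dominate $\|\nabla u\|^2_{L^2}$, and the Poincar\'e inequality is of no help because it bounds $\|u\|_{L^2}$ by the full gradient, which the form does not control; when $\e=0$ not even the $L^2$ norm is controlled. One could pass to the abstract form-closure space, but then relating that weak solution to a function in $C(\overline\Omega)$ with pointwise boundary values (the class in which the lemma asserts uniqueness) is exactly the nontrivial part. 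A smaller issue: a merely continuous $\varphi$ on $\de\Omega$ has no Lipschitz extension in general, which is why the paper handles continuous data by a separate approximation step (Stone--Weierstrass/Tietze plus uniform WMP estimates). The paper's route avoids both problems: it transforms the operator ($\widetilde L u=wL(wu)$) to make the zero-order term strictly negative near $x_0$ (whence a local WMP and the $\e$-independence of the basis), solves the regularized uniformly elliptic problems $P_n=P+n^{-1}\Delta$ classically, gets uniform sup bounds from the WMP, passes to a weak-$*$ limit, and uses hypoellipticity for interior regularity together with the barriers available on the special domains of Lemma \ref{lem.regol} for boundary continuity. As written, your proposal would need to be repaired at both of these points, the choice of the basis being the essential one.
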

 We prove this theorem for a considerably larger
 class of operators than the $\LL_\e$ above; see Theorem \ref{th.localDiriMIGLIO}.
 We adapt to our context the well established techniques in \cite[Section 5]{Bony} used for H\"ormander operators.
 These techniques are perfectly suited to our more general case, since
 they only rely on hypoellipticity and on the Weak Maximum Principle.
 Since the proof presents no further difficulties, it is provided in the Appendix, for the sake of completeness only.

 With the existence of the weak solution of the Dirichlet problem for $\LL_\e$ on a bounded open set
 $\Omega$, we can define the associated Green operator as usual:
\begin{definition}[\textbf{Green operator and Green measure}]\label{defi.Green}
 Let $\e\geq 0$ be fixed, and let $\LL_\e$ and $\Omega$ satisfy, respectively,
 the hypothesis and the thesis of Lemma \ref{th.localDiri}. We consider the operator (depending on $\LL_\e$ and $\Omega$; we avoid
 keeping track of the dependency on $\Omega$ in the notation)
 \begin{equation}\label{greenoperator}
    G_\e: C(\overline{\Omega},\R)\longrightarrow C(\overline{\Omega},\R)
 \end{equation}
  mapping $f\in C(\overline{\Omega},\R)$ into the function $G_\e(f)$
  which is the unique distributional solution $u$ in $C(\overline{\Omega},\R)$
  of the Dirichlet problem
 \begin{gather}\label{DIRIEQGreen}
    \left\{
      \begin{array}{ll}
        \LL_\e u=-f & \hbox{on $\Omega$\quad  (in the weak sense of distributions),} \\
        u=0 & \hbox{on $\de\Omega$\quad (point-wise).}
      \end{array}
    \right.
 \end{gather}
 We call $G_\e$ \emph{the Green operator related to $\LL_\e$ and to the open set $\Omega$}.

 By the Riesz Representation Theorem (which is applicable thanks to the monotonicity pro\-per\-ties in
 Lemma \ref{th.localDiri} with respect to the function $f$), for every $x\in\overline{\Omega}$ there exists
 a (nonnegative) Radon measure $\lambda_{x,\e}$ on $\overline{\Omega}$ such that
 \begin{gather}\label{DIRIEQkernel}
  G_\e(f)(x)=\int_{\overline{\Omega}} f(y)\,\d\lambda_{x,\e}(y),\quad \text{for
  every $f \in C(\overline{\Omega},\R)$.}
 \end{gather}
 We call $\lambda_{x,\e}$ \emph{the Green measure related to $\LL_\e$ (to the open set $\Omega$ and to the point $x$)}.
\end{definition}
 Let $\LL$ be as in \eqref{mainLL}; in the rest of the paper, we set once and for all
\begin{equation}\label{VVmu}
 \d \nu(x):=V(x)\,\d x,
\end{equation}
  that is, $\nu$ is the (Radon) measure on $\RN$ associated with the (positive)
  density $V$ in \eqref{mainLL}, absolutely continuous with respect to the Lebesgue measure on $\RN$. It is clear that the
  measure $\nu$ plays the following key r\^ole:
  \begin{equation}\label{autoagg}
    \int \varphi\,\LL\psi\,\d\nu=\int \psi\,\LL\varphi\,\d\nu,\quad\text{
    for every $\varphi,\psi\in C_0^\infty(\RN,\R)$,}
  \end{equation}
 thus making $\LL$ (formally) self-adjoint in the space $L^2(\R^N,\d\nu)$.
 We observe that (in general) our operators $\LL$ in \eqref{mainLL} are not
 \emph{classically} self-adjoint; indeed the classical adjoint operator $\LL^*$ of $\LL$ is related to $\LL$
 by the following identity (a consequence of \eqref{autoagg})
 \begin{equation}\label{autoaggBISSS}
 \LL^* u=V\,\LL(u/V),\quad \text{for every $u$ of class $C^2$.}
 \end{equation}
 The possibility of dealing with non-identically $1$ densities $V$
 (as in the case of Lie groups, see Example \ref{exa.Lie}-(a))
 makes it more convenient  to decompose the Green measure $\lambda_{x,\e}$
 with respect to $\nu$ in \eqref{VVmu}, rather than w.r.t.\,Lebesgue measure.
 Hence we prove the following:
\begin{theorem}[\textbf{Green kernel}]\label{th.greeniani}
 Suppose that $\LL$ is an operator of the form
 \eqref{mainLL}, with $C^\infty$ coefficients $V>0$ and $(a_{i,j})\geq 0$,
 and that $\LL$ satisfies \emph{(NTD)}.
 Let $\e\geq 0$ be fixed.
 We set $\LL_\e:=\LL-\e$ and we assume that $\LL_\e$ is hypoelliptic on every open
 subset of $\RN$.\medskip

 Let $\Omega$ be an open set as in Lemma \ref{th.localDiri}.
 If $G_\e$ and $\lambda_{x,\e}$ are as in Definition \ref{defi.Green},
 there exists a function $k_\e:\Omega\times \Omega\to \R$, smooth and positive
 out of the diagonal of $\Omega\times \Omega$, such that the following representation holds true:
\begin{equation}\label{gprororfondam}
   G_\e(f)(x)=
   \int_\Omega f(y)\,k_\e(x,y)\,\d\nu(y), \quad
   \text{for every $x\in\Omega$,}
\end{equation}
   and for every $f \in C(\overline{\Omega},\R)$.
  We call $k_\e$ \emph{the Green kernel related to $\LL_\e$ and to the open set $\Omega$}.\medskip

  Furthermore, we have the following properties:
\begin{enumerate}
  \item[\emph{(i)}] Symmetry of the Green kernel:
\begin{equation}\label{gprororfondam2}
   k_\e (x,y)=k_\e (y,x) \quad
   \text{for every $x,y\in\Omega$.}
\end{equation}

  \item[\emph{(ii)}]
  For every fixed $x\in \Omega$, the function $k_\e (x,\cdot)$
  is $\LL_\e$-harmonic in $\Omega\setminus\{x\}$; moreover
  $G_\e(\LL_\e\varphi)=-\varphi=\LL_\e(G_\e(\varphi))$ for any $\varphi\in C_0^\infty (\Omega,\R)$, that is
\begin{gather}\label{gprororfondam2PROPR2}
\begin{split}
 -\varphi(x)&=\int_\Omega \LL_\e\varphi(y)\,k_\e (x,y)\,\d\nu(y) \\
  &=\LL_\e\Big(\int_\Omega \varphi(y)\,k_\e (x,y)\,\d\nu(y)\Big),
  \qquad
   \text{for every $\varphi\in C_0^\infty (\Omega,\R)$.}
\end{split}
\end{gather}

  \item[\emph{(iii)}]
  For every fixed $x\in \Omega$, one has
\begin{equation}\label{gprororfondam2PROPR23}
   \lim_{y\to y_0} k_\e (x,y)=0\quad \text{for any $y_0\in\de\Omega$.}
\end{equation}


  \item[\emph{(iv)}]
  For every fixed $x\in \Omega$, the functions
 $k_\e (x,\cdot)=k_\e (\cdot,x)$ are in $L^1(\Omega)$, and $k_\e \in L^1(\Omega\times \Omega)$.
\end{enumerate}
\end{theorem}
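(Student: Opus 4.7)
The plan is to extract $k_\e$ as a smooth density of the Green measure $\lambda_{x,\e}$ off the diagonal by means of hypoellipticity, read off symmetry from the $\nu$-self-adjointness of $\LL_\e$, and deduce the remaining properties from the Dirichlet-problem theory of Lemma \ref{th.localDiri} together with the Strong Maximum Principle of Theorem \ref{th:SMP}.

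First I would establish the distributional identity $\LL_\e^{\ast}\lambda_{x,\e}=-\delta_x$ on $\Omega$. The crucial observation is the Green-operator identity $G_\e(\LL_\e\varphi)=-\varphi$ for every $\varphi\in C_0^\infty(\Omega)$: indeed both functions are continuous on $\overline{\Omega}$, vanish on $\partial\Omega$, and solve the equation $\LL_\e u=-\LL_\e\varphi$, so by the uniqueness in Lemma \ref{th.localDiri} they coincide. Pairing this with the representation \eqref{DIRIEQkernel} gives $\langle\LL_\e^{\ast}\lambda_{x,\e},\varphi\rangle=G_\e(\LL_\e\varphi)(x)=-\varphi(x)$. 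Since $\LL_\e^{\ast}u=V\LL_\e(u/V)$ by \eqref{autoaggBISSS}, hypoellipticity of $\LL_\e$ is equivalent---via conjugation by the smooth positive $V$---to that of $\LL_\e^{\ast}$; hence on $\Omega\setminus\{x\}$ the distribution $\lambda_{x,\e}$ has a smooth density with respect to Lebesgue measure, which I write as $k_\e(x,\cdot)\,V(\cdot)$. This yields the representation \eqref{gprororfondam} and, back through $V\LL_\e u=\LL_\e^{\ast}(Vu)$, the $\LL_\e$-harmonicity of $k_\e(x,\cdot)$ on $\Omega\setminus\{x\}$ asserted in (ii); the second equality in \eqref{gprororfondam2PROPR2} is immediate by differentiating under the integral sign on the compact support of $\varphi$.

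The symmetry (i) is obtained by lifting the $\nu$-self-adjointness \eqref{autoagg} of $\LL_\e$ to $G_\e$. Given $f,g\in C_0^\infty(\Omega)$, the regularity assertion of Lemma \ref{th.localDiri} yields $u_f:=G_\e(f)$ and $u_g:=G_\e(g)$ in $C^\infty(\Omega)\cap C(\overline{\Omega})$, both vanishing on $\partial\Omega$. A divergence-form integration by parts (using $V\LL=\sum_{i,j}\partial_i(V a_{i,j}\partial_j)$, the symmetry of $(a_{i,j})$, and the vanishing boundary values on the Lipschitz set $\partial\Omega$, justified by a standard exhaustion $\Omega_\eta\uparrow\Omega$) produces $\int u_f\,g\,d\nu=\int u_g\,f\,d\nu$, which upon inserting the kernel representation and relabelling integration variables forces $k_\e(x,y)=k_\e(y,x)$ off the diagonal. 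Joint $C^\infty$-regularity of $k_\e$ on $(\Omega\times\Omega)\setminus\mathrm{diag}$ then follows from hypoellipticity by a standard Schwartz-kernel argument: for every pair of open sets $U,W\subset\Omega$ with $\overline{U}\cap\overline{W}=\emptyset$, hypoellipticity of $\LL_\e$ together with the closed-graph theorem shows that $G_\e$ maps $C_0^\infty(W)$ continuously into $C^\infty(U)$, and the Schwartz kernel of such a map between nuclear spaces is automatically smooth on $U\times W$; covering $(\Omega\times\Omega)\setminus\mathrm{diag}$ by such products completes the argument.

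Strict positivity of $k_\e$ off the diagonal is a direct application of Theorem \ref{th:SMP}, part (2), with the nonnegative zero-order term $c\equiv\e$: on the connected open set $\Omega\setminus\{y\}$ the nonnegative $\LL_\e$-harmonic function $w:=k_\e(\cdot,y)$ cannot touch $0$ in the interior, for otherwise $-w$---which is $\LL_\e$-subharmonic with nonnegative interior maximum $0$---would be identically zero, contradicting $\LL_\e^{\ast}\lambda_{y,\e}=-\delta_y\neq 0$. The boundary behaviour (iii) follows from the weak convergence $\lambda_{y,\e}\rightharpoonup 0$ as $y\to y_0\in\partial\Omega$ (which is immediate from $\int f\,d\lambda_{y,\e}=G_\e(f)(y)\to 0$ for every $f\in C(\overline{\Omega})$) combined with the symmetry and a standard barrier argument exploiting the Lipschitz regularity of $\partial\Omega$. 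Finally, for (iv) I take $f\equiv 1\in C(\overline{\Omega})$: the function $\int_\Omega k_\e(x,y)\,d\nu(y)=G_\e(1)(x)$ is continuous on the compact set $\overline{\Omega}$, hence bounded, so that $k_\e(x,\cdot)\in L^1(\nu)$ uniformly in $x$, and Fubini yields $k_\e\in L^1(\Omega\times\Omega)$. The principal technical obstacle I expect is precisely upgrading the separate smoothness of $k_\e(x,\cdot)$ and $k_\e(\cdot,y)$---automatic from scalar hypoellipticity---to \emph{joint} $C^\infty$-regularity off the diagonal; this is where the nuclearity/Schwartz-kernel argument above uses the full strength of hypoellipticity of $\LL_\e$ via the closed-graph theorem.
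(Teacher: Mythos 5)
Your skeleton matches the paper's (the distributional identity $(\LL_\e)^*\lambda_{x,\e}=-\mathrm{Dir}_x$, hypoellipticity of the classical adjoint via conjugation by $V$ as in \eqref{autoaggBISSS}, a smooth density off the pole, positivity via the maximum principle, integrability from $G_\e(1)$), but three of your steps are genuinely incomplete or would fail. First, you pass from ``$\lambda_{x,\e}$ has a smooth density on $\Omega\setminus\{x\}$'' directly to \eqref{gprororfondam}. This ignores that $\lambda_{x,\e}$ is a Radon measure on $\overline{\Omega}$ which could a priori charge the point $x$ and the boundary $\de\Omega$; the representation \eqref{gprororfondam} for all $f\in C(\overline{\Omega},\R)$ requires $\lambda_{x,\e}(\{x\})=0$ and $\lambda_{x,\e}(\de\Omega)=0$. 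The paper proves the first by a cutoff--Fourier transform--Riemann--Lebesgue argument, and this is exactly where (NTD) is used; it proves the second by applying the WMP of Remark \ref{PMFancheconc2} to $G_\e(f)$ on $\overline{\Omega}\setminus D_\delta$ together with $G_\e(1)=0$ on $\de\Omega$. Neither point appears in your proposal. Second, your symmetry argument integrates by parts up to $\de\Omega$ along an exhaustion: but $G_\e(f)$ and $G_\e(g)$ are only known to lie in $C^\infty(\Omega)\cap C(\overline{\Omega})$, and for these degenerate operators there is no control of their gradients near the (merely Lipschitz) boundary, so the boundary terms on $\de\Omega_\eta$ need not vanish and the ``standard exhaustion'' is not justified. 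This is precisely why the paper proves symmetry by a one-sided comparison: it sets $\Phi(x)=\int_\Omega \varphi(y)\,k_\e(y,x)\,\d\nu(y)$, shows $\LL_\e\Phi=-\varphi$ in $\DD(\Omega)$ by Fubini and \eqref{gprororfondam2PROPR2}, and then uses $\Phi\geq 0$ together with $G_\e(\varphi)=0$ on $\de\Omega$ and the WMP to get $G_\e(\varphi)\leq\Phi$, i.e. $k_\e(x,y)\leq k_\e(y,x)$, with equality by exchanging roles. No boundary regularity of solutions is ever needed.

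Third, your route to joint smoothness is wrong as stated: a continuous linear map $C_0^\infty(W)\to C^\infty(U)$ does \emph{not} automatically have a smooth Schwartz kernel, even for disjoint $U,W$ (take $K=a\otimes b$ with $a\in C^\infty(U)$ and $b\in\DD(W)$ non-smooth; the map $\varphi\mapsto a\,\lan b,\varphi\ran$ is continuous into $C^\infty(U)$ but its kernel is not smooth). Smoothness of the kernel is equivalent to the operator mapping compactly supported \emph{distributions} into $C^\infty$, and this is what has to be proved. The paper does it by first noting that $\{g_x\}_{x\in W}$ is bounded in $L^1(\Omega)$ (by $\max_{\overline{\Omega}}G_\e(1)$), hence, by the equivalence of the $L^1_{\loc}$ and $C^\infty$ topologies on the space of $(\LL_\e)^*$-harmonic functions (Remark \ref{rem.topolocinfyt}), bounded in $C^\infty(U)$ uniformly in $x\in W$ (estimate \eqref{staellatoplo}); then $F(T)(x)=\lan T,g_x\ran$ is shown to be bounded and weakly $\LL_\e$-harmonic on $W$, hence smooth, and only then is the kernel theorem invoked. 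Your proposal never uses this uniformity, and the same device is the missing ingredient in your item (iii): a barrier argument is not what is needed there; rather, by symmetry $k_\e(y_n,\cdot)$ is $\LL_\e$-harmonic near the fixed point $x$ with $L^1$-norm bounded by $\sup_\Omega(1/V)\,G_\e(1)(y_n)\to 0$ as $y_n\to y_0\in\de\Omega$, and Remark \ref{rem.topolocinfyt} upgrades this to locally uniform convergence to $0$. Finally, a citation point: in Theorem \ref{th.greeniani} only $\LL_\e$ is assumed hypoelliptic, so the strict positivity of $k_\e$ (and every WMP use) must rest on Remark \ref{PMFancheconc} (resp.\ Remark \ref{PMFancheconc2}), not on Theorem \ref{th:SMP}(2), which requires (HY) for $\LL$ itself.
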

\noindent The key ingredients in the proof of the above results are the following facts:\medskip
 \begin{itemize}
   \item the hypoellipticity of $\LL_\e$ (as assumed in the hypothesis)
  which will imply the hypoellipticity of
  the \emph{classical} adjoint of $\LL_\e$ (see Remark \ref{rem.hyopoLadj});
   \item the $C^\infty$-topology on the space of the $\LL_\e$-harmonic functions
   is the same as the $L^1_{\textrm{loc}}$-topology,
   another consequence of the hypoellipticity of $\LL_\e$ (Remark \ref{rem.topolocinfyt});
   \item the fact that $\LL$ is self-adjoint on $L^2(\RN,\d\nu)$ (see \eqref{autoagg}) so that the same is true of $\LL_\e$
   (this will be crucial in proving the symmetry of the Green kernel);
   \item the Strong Maximum Principle for the perturbed operator $\LL_\e=\LL-\e$, which we obtain as a consequence of
   our previous Strong Maximum Principle for $\LL$ in Theorem \ref{th:SMP} (see precisely Remark \ref{PMFancheconc}, where
    \emph{nonnegative} maxima are considered): this is a key step for the proof of the \emph{positivity} of $k_\e$;
   \item the Schwartz Kernel Theorem (used for the regularity of the Green kernel).\medskip
 \end{itemize}
 The difference with respect to the analogous result given in the framework of the H\"ormander operators in \cite[Théorème 6.1]{Bony}
 is the introduction of the relevant measure $\nu $ in the integral representation \eqref{gprororfondam};
 indeed, the symmetry property \eqref{gprororfondam2} of the kernel $k_\e$ is connected with
 the identity \eqref{autoagg}, which is not true (in general) if
 we consider Lebesgue measure instead of $\nu$.\medskip

 We are now ready to give the second main result of the paper:
\begin{theorem}[\textbf{Strong Harnack Inequality}]\label{lem.crustimabassoTHEOFORTE}
 Suppose that $\LL$ is an operator of the form
 \eqref{mainLL}, with $C^\infty$ coefficients $V>0$ and $(a_{i,j})\geq 0$, and
 suppose it satisfies hypotheses \emph{(NTD)}, \emph{(HY)}
 and \emph{(HY)$_\e$}.\medskip

 Then, for every connected open set $O\subseteq \RN$ and every
 compact subset $K$ of $O$,
 there exists a constant $M=M(\LL,O,K)\geq 1$ such that
\begin{equation}\label{HarnackdeboleEQ1forte}
 \sup_{ K} u \leq M\,\inf_K u,
\end{equation}
 for every nonnegative $\LL$-harmonic function $u$ in $O$.\medskip

 If $\LL$ is subelliptic or if it has $C^\omega$ coefficients, then
 assumption \emph{(HY)$_\e$} can be dropped.
\end{theorem}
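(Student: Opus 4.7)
The plan follows the path traced in the introduction, imitating the scheme of Bony \cite{Bony}: first prove a \emph{Weak Harnack Inequality} (a local version of \eqref{HarnackdeboleEQ1forte}) via the Green-kernel lower bound \eqref{lowboundkeps}, then upgrade to the strong form by invoking an abstract Potential Theory result of Mokobodzki and Brelot \cite{Brelot}. The final assertion for subelliptic or $C^\omega$ operators reduces to the general case because in those settings (HY) already implies (HY)$_\e$ by Remark \ref{rem.equivHYeps}.

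For the Weak Harnack step I would fix $x_0\in\RN$, pick an $\Omega$ from the basis of Lemma \ref{th.localDiri} with $x_0\in\Omega$, and exploit the estimate from Lemma \ref{lem.crustimabasso}, namely
$$
u(x)\geq \e \int_\Omega u(y)\,k_\e(x,y)\,V(y)\,\d y\qquad(x\in\Omega),
$$
valid for every nonnegative $\LL$-harmonic $u$ on $\Omega$. The strict positivity and continuity of $k_\e$ off the diagonal (Theorem \ref{th.greeniani}) yield a neighborhood $U$ of $x_0$ and a compact $K_0\subset\Omega\setminus\overline{U}$ with $k_\e(x,y)\geq c>0$ on $U\times K_0$; hence $u(x)\geq c' \int_{K_0} u\,\d\nu$ on $U$. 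The target local inequality is $\sup_U u\leq C\,u(x_0)$. I would argue by contradiction and compactness: if $u_n$ is a sequence of nonnegative $\LL$-harmonic functions on $\Omega$ violating it, after normalization we may assume $u_n(x_0)\to 0$ while $\sup_U u_n \equiv 1$. The hypoellipticity hypothesis (HY) ensures that on $\H_\LL(\Omega)$ the $L^1_{\loc}$ topology coincides with the $C^\infty_{\loc}$ one (Remark \ref{rem.topolocinfyt}), so a subsequence converges in $C^\infty_{\loc}$ to some $u_\infty\in\H_\LL(\Omega)$ with $u_\infty\geq 0$ and $u_\infty(x_0)=0$. The Strong Maximum Principle of Theorem \ref{th:SMP} then forces $u_\infty\equiv 0$ on the component of $\Omega$ through $x_0$, contradicting $\sup_U u_n\equiv 1$.

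To pass from the Weak to the Strong Harnack, I would verify that the sheaf $\H_\LL$ fits the axioms of an abstract Brelot harmonic space on the basis of regular open sets produced by Lemma \ref{th.localDiri} --- the ingredients being the solvability of the Dirichlet problem (Lemma \ref{th.localDiri}), the Strong Maximum Principle (Theorem \ref{th:SMP}, see also Remark \ref{PMFancheconc} for $\LL_\e$), the properties of the Green kernel (Theorem \ref{th.greeniani}), and the just-proven local Harnack property --- and then invoke the Mokobodzki--Brelot theorem, which upgrades the local inequality to \eqref{HarnackdeboleEQ1forte} on every compact $K$ inside any connected open $O$. As an alternative, one can chain Weak Harnack by hand: cover $K$ by finitely many Weak-Harnack neighborhoods $U_1,\dots,U_n$ with $U_i\cap U_{i+1}\neq\emptyset$, where connectedness of $O$ and compactness of $K$ control the chain length $n$ in terms of $\LL$, $O$ and $K$, and iterate to obtain $M=M(\LL,O,K)$.

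The main obstacle is the Weak Harnack step, and within it the delicate interplay between: separating the singularity of $k_\e$ on the diagonal so as to extract a pointwise-to-integral lower bound with a \emph{single} compact $K_0$ independent of $x\in U$; using hypoellipticity to compactify a normalized sequence of $\LL$-harmonic functions that is a priori only uniformly bounded on the small set $U$ (one has to propagate this bound to a neighborhood of $\overline{U}$, e.g.\ by applying the same Green-kernel argument on a slightly larger set from the basis); and triggering the SMP on the limit so as to reach the contradiction. Once the Weak Harnack is secured, either chaining strategy yields the Strong Harnack with essentially no further work.
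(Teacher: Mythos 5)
Your overall architecture is the paper's: the final assertion via Remark \ref{rem.equivHYeps}, the Strong Harnack via the Mokobodzki--Brelot Theorem \ref{th.mokobre} once (Regularity) (Lemma \ref{th.localDiri} with $f=0$) and a Weak Harnack inequality are verified, and the Weak Harnack built on the lower bound of Lemma \ref{lem.crustimabasso}, the positivity of $k_\e$, and the topology equivalence of Remark \ref{rem.topolocinfyt}. Where you genuinely differ is inside the Weak Harnack step. The paper argues directly and quantitatively: reducing to $y_0\notin K$, for $x_0\in K$ it gets $u(y_0)\geq \mathbf{c}_1\int_W u$ from $\inf_{z\in W}k_\e(z,y_0)\geq\mathbf{c}>0$ (using the symmetry $k_\e(y_0,z)=k_\e(z,y_0)$), and then bounds $\sup_U u$ --- in fact all derivatives, Theorem \ref{lem.crustimabassoTHEO} --- by $\mathbf{c}_2\int_W u$ over the \emph{same} $W$, via the open-mapping consequence of (HY); no limit functions appear. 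You instead run a normal-families contradiction argument: normalize violating functions, extract a $C^\infty_\loc$ limit through the $L^1_\loc$--$C^\infty$ equivalence, and kill the limit with the SMP (applied to $-u_\infty$, which attains the maximum $0$ at $x_0$). This is workable and softer, at the price of the step you yourself flag: the normalization bounds $u_n$ only on $U$, and to extract a limit you need $\sup_n\int_{K'}u_n\,\d\nu<\infty$ for all compacts $K'$ of a fixed open set containing $\overline{U}$. This does follow from \eqref{crustimabasso.EQ1}, e.g.\ by using two distinct centers $x_1,x_2\in U$ (where $u_n\leq 1$), so that every compact of $\Omega$ splits into a piece away from $x_1$ and a piece away from $x_2$, on each of which the relevant kernel is bounded below; note also that Lemma \ref{lem.crustimabasso} requires $u$ harmonic on a neighborhood of $\overline{\Omega}$, not merely on $\Omega$, which is harmless since $u$ lives on $O\supset\overline{\Omega}$.

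One claim should be corrected: your ``alternative'' of reaching the \emph{Strong} Harnack inequality by merely chaining the local estimates does not work as stated. Your local estimate is one-sided and centered, $\sup_U u\leq C\,u(x_0)$ with $x_0$ the center of $U$; chaining such estimates (arranging each center to lie in the next neighborhood, ending at a prescribed point $y_0$) bounds $\sup_K u$ by $C(y_0)\,u(y_0)$ --- which is exactly the (Weak Harnack Inequality) axiom, with constant depending on $y_0$ --- but it does not give $\sup_K u\leq M\,\inf_K u$, because $\inf_K u$ is attained at an arbitrary point $y\in K$ and the centered estimate provides no bound of $u(\mathrm{center})$ in terms of $u(y)$ for other $y$ in the same neighborhood. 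Making the constant uniform in the base point is precisely the nontrivial content of Theorem \ref{th.mokobre} (the equicontinuity of the family $\Phi_{x_0}$), which is why the paper invokes Mokobodzki--Brelot rather than chaining. The chaining route could be repaired by upgrading your local estimate to a two-sided one, $\sup_{\overline{U}}u\leq C\,\inf_{\overline{U}}u$, by the same compactness argument, but that is genuine additional work, not ``essentially no further work''. Since your primary route goes through Theorem \ref{th.mokobre}, the proof as a whole stands once the $L^1_\loc$-boundedness step above is written out.
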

 \noindent The last assertion follows from Remark \ref{rem.equivHYeps}.

 We now present the spine of the proof of Theorem \ref{lem.crustimabassoTHEOFORTE}.\medskip

 The main step towards the Strong Harnack Inequality
 is the following Theorem \ref{th.mokobre} from Potential Theory.
 A proof of a more general abstract version of this useful result,
 in the framework of axiomatic harmonic spaces,
 can be found in the survey notes \cite[pp.20--24]{Brelot} by Brelot,
 where this theorem is attributed to G. Mokobodzki.
 (See also a further improvement to harmonic spaces
 which are not necessarily second-countable, by Loeb and Walsh, \cite{LoebWalsh}).
 Instead of appealing to an abstract Potential-Theoretic statement,
 we prefer to formulate the result under the following more specific form.
\begin{theorem}\label{th.mokobre}
 Let $L$ be a second order linear PDO in $\RN$ with smooth coefficients. Suppose the following conditions are satisfied.
\begin{description}
  \item[(Regularity)] There exists a basis $\mathcal{B}$ for the Euclidean topology of $\RN$
  (consisting of bounded open sets)
  such that, for every $\Omega\in \mathcal{B}\setminus\{\varnothing\}$
 and for every $\varphi\in C(\de\Omega,\R)$, there exists a unique
 $L$-harmonic function $H_\varphi^\Omega\in C^2(\Omega)\cap C(\overline{\Omega})$
 solving the Dirichlet problem
 $$\left\{
     \begin{array}{ll}
       L u=0 & \hbox{in $\Omega$} \\
       u=\varphi & \hbox{on $\de\Omega$,}
     \end{array}
   \right.
 $$
  and satisfying $H_\varphi^\Omega\geq 0$ whenever $\varphi\geq0$.

  \item[(Weak Harnack Inequality)]
 For every connected open set $O\subseteq \RN$, every
 compact subset $K$ of $O$ and every $y_0\in O$,
 there exists a constant $C(y_0)=C(L,O,K,y_0)>0$ such that
\begin{equation*}
 \sup_{ K} u \leq C(y_0)\,u(y_0),
\end{equation*}
 for every nonnegative $L$-harmonic function $u$ in $O$.
\end{description}
 Then, the following \emph{Strong Harnack Inequality} for $L$ holds:
 for every connected open set $O$ and every
 compact subset $K$ of $O$
 there exists a constant $M=M(L,O,K)\geq 1$ such that
\begin{equation}\label{SHIvera}
 \sup_{K} u \leq M\,\inf_{K} u,
\end{equation}
 for every nonnegative $L$-harmonic function $u$ in $O$.
\end{theorem}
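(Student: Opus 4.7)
The plan is to follow the classical Mokobodzki-Brelot strategy: combine the Weak Harnack Inequality with the Regularity hypothesis to endow the convex cone $\H^+_L(O)$ of nonnegative $L$-harmonic functions on $O$ with a compact metric structure (after normalization at one point), and then apply the Baire category theorem to upgrade the pointwise bound $\sup_K u\leq C(y_0)\,u(y_0)$ to the uniform bound $\sup_K u\leq M\,\inf_K u$.

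First I would derive a Strong Minimum Principle from Weak Harnack: if $u\in\H^+_L(O)$ vanishes at some $y_0\in O$, then for any compact $K\ni y_0$ in $O$ one has $\sup_K u\leq C(y_0)\,u(y_0)=0$, and exhausting $O$ by such $K$ forces $u\equiv 0$. Next, since Weak Harnack at any fixed $y_0\in K$ already gives $\sup_K u\leq C(y_0)\,u(y_0)$, it suffices to prove $u(y_0)\leq M\,\inf_K u$ uniformly in $u\in\H^+_L(O)$; normalizing $u(y_0)=1$, this reduces to the uniform lower bound $\inf_K u\geq 1/M$ on the convex set $\mathcal{K}:=\{u\in\H^+_L(O):u(y_0)=1\}$. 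The heart of the argument is to show that $\mathcal{K}$ is compact in the topology of locally uniform convergence on $O$. Local uniform boundedness of $\mathcal{K}$ comes from Weak Harnack applied at $y_0$; equicontinuity is produced by the Regularity hypothesis, which via uniqueness of the Dirichlet solution identifies $u|_{\overline{\Omega}}$ with the Poisson integral $u(y)=\int_{\de\Omega}u\,\d\mu_y^\Omega$ on every $\Omega\in\mathcal{B}$ with $\overline{\Omega}\subset O$ (using the Riesz theorem to produce the harmonic measure $\mu_y^\Omega$), and makes $y\mapsto\mu_y^\Omega$ weak-$\ast$ continuous as a consequence of the continuity of $y\mapsto H_\varphi^\Omega(y)$. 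Ascoli-Arzel\`a together with closedness of $\H^+_L(O)$ under locally uniform limits then yields that $\mathcal{K}$ is compact and metrizable. Now the closed subsets $F_n:=\{u\in\mathcal{K}:\inf_K u\geq 1/n\}$ cover $\mathcal{K}$ by the Strong Minimum Principle, so Baire's theorem produces some $F_{n_0}$ with nonempty interior in $\mathcal{K}$. A Mokobodzki-style convex-cone argument (spelled out in Brelot's notes \cite{Brelot}) promotes this nonemptiness of the interior into a uniform bound $\mathcal{K}\subset F_N$ for some $N$, and the proof is complete.

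The principal obstacle is the compactness step: extracting equicontinuity of $\mathcal{K}$ from only Regularity and Weak Harnack, in the absence of any Schauder-type interior estimate that would be unavailable in the degenerate framework targeted by the paper. The harmonic-measure Poisson representation furnishes the right bridge, since the weak-$\ast$ continuity of $y\mapsto\mu_y^\Omega$ is a free consequence of Dirichlet solvability. A secondary, more technical obstacle is the passage from ``$F_{n_0}$ has nonempty interior'' to ``$\mathcal{K}\subset F_N$'', for which the convex-cone structure of $\H^+_L(O)$ is essential; a clean alternative to reproducing Mokobodzki's argument here is simply to verify that the abstract hypotheses of his theorem, in the axiomatic version stated in \cite{Brelot}, hold in our concrete setting, thereby reducing Theorem \ref{th.mokobre} to a direct citation.
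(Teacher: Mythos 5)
The step on which everything hinges is exactly the one you declare ``free'': the equicontinuity of the normalized family $\mathcal{K}$. Weak-$\ast$ continuity of $y\mapsto\mu^\Omega_y$ only says that $y\mapsto\int_{\de\Omega}\varphi\,\d\mu^\Omega_y$ is continuous for each \emph{fixed} $\varphi\in C(\de\Omega,\R)$, with a modulus of continuity depending on $\varphi$; all you know about the boundary data $\{u|_{\de\Omega}:u\in\mathcal{K}\}$ is a uniform sup-bound (from the Weak Harnack Inequality), and uniform boundedness of the data plus weak-$\ast$ continuity of the measures does \emph{not} give equicontinuity of the Poisson integrals (abstract counterexample: $\mu_t=\delta_t$ on $[0,1]$ is weak-$\ast$ continuous in $t$, yet $\{t\mapsto\int\varphi\,\d\mu_t=\varphi(t):\ \|\varphi\|_\infty\le 1\}$ is the whole unit ball of $C[0,1]$, far from equicontinuous). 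To extract equicontinuity from the Poisson representation one needs a much stronger continuity of $x\mapsto\mu^\Omega_x$, essentially a domination of harmonic measures of the type $\mu^\Omega_{x_1}\le M\,\mu^\Omega_{x_2}$ (with constants close to $1$ as $x_1\to x_2$), and deriving this from the Weak Harnack Inequality is precisely the deep functional-analytic content of Mokobodzki's theorem. The paper does not reprove this: it cites Brelot \cite[pp.\,20--24]{Brelot} for the statement that, under (Regularity) and (Weak Harnack Inequality), the set $\Phi_{x_0}=\{h\in\H_L(O): h\ge 0,\ h(x_0)=1\}$ is equicontinuous at $x_0$, and then concludes by an elementary chaining argument (cover a connected compact $K$ by finitely many balls on which every such $u$ oscillates between $\tfrac12 u(x_i)$ and $\tfrac32 u(x_i)$, chained so that consecutive unions intersect, giving $M=3^p$). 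So your ``self-contained'' route has a genuine gap at its core; the only sound version of it is your fallback, namely verifying Brelot's axioms and citing the Mokobodzki--Brelot result, which is in substance what the paper does.

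Two further points confirm that the gap is the whole content. If compactness of $\mathcal{K}$ in the locally uniform topology were really available, your Baire/convex-cone machinery would be superfluous: $u\mapsto\inf_K u$ is continuous for that topology and strictly positive on $\mathcal{K}$ by your strong minimum principle, so on a compact $\mathcal{K}$ it attains a positive minimum $1/M$ directly; thus all the difficulty sits in the compactness claim you did not establish. Moreover, the theorem is stated with no hypoellipticity hypothesis, so even the closedness of $\H^+_L(O)$ (classical $C^2$ solutions) under locally uniform limits, which you invoke for Ascoli--Arzel\`a, is not automatic; in the axiomatic framework this is the r\^ole of the convergence axioms recalled in Remark \ref{rem.precisiamo}, not a consequence of (Regularity) alone.
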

 See also Remark \ref{rem.precisiamo}
 for some equivalent assumptions that can replace
 the above (Weak Harnack Inequality) to get the Strong
 Harnack Inequality.
 The proof of Theorem \ref{th.mokobre} is given in Section \ref{sec:Harnackvera},
 starting from a result by Mokobodzki and Brelot in \cite[Chapter I]{Brelot}:
 in the latter it is shown that if the axioms (Regularity) and (Weak Harnack Inequality) are fulfilled then,
  for any connected open set $O\subseteq\RN$ and any
 $x_0\in O$, the set
 \begin{equation}\label{equicontinuosa}
    \Phi_{x_0}:=\Big\{h\in \mathcal{H}_L(O)\,:\,h\geq 0,\quad h(x_0)=1\Big\}
 \end{equation}
 is equicontinuous at $x_0$. The proof of this fact rests on
 some deep results of Functional Analysis concerning the family
 of the so-called harmonic measures $\{\mu^\Omega_x\}_{x\in \de\Omega}$
 related to $L$ (and to a regular set $\Omega$ for the Dirichlet problem), jointly with some basic properties of the harmonic sheaf
 associated with the operator $L$.

 As observed by Bony in \cite[Remarque 7.1, p.300]{Bony},
 the Strong Harnack Inequality classically relies on two-sided estimates of the ratios
 $h(x_1,\cdot)/h(x_2,\cdot)$, where $h(x,y)$ is the relevant Poisson kernel; these estimates were unavailable in the
 setting considered in \cite{Bony}, as they are (to the best of our knowledge) in our setting too.
 However, like in \cite{Bony}, the unavailability of these estimates can be overcome
 by the use of the Green kernel for the perturbed operator $\LL-\e$ and by the Strong Maximum Principle, as they jointly lead
 to the Weak Harnack Inequality.
 It is interesting to observe that, once the Weak Harnack Inequality
 is available, the equicontinuity of \eqref{equicontinuosa} (an equivalent version of the Strong Harnack Inequality)
 is derived by Mokobodzki and Brelot by the comparison (in the sense of measures) $\mu^\Omega_{x_1}\leq M\,\mu^\Omega_{x_2}$ for harmonic measures:
 this comparison seems to be the core substitute for the mentioned pointwise estimates with Poisson kernels centered at different points $x_1,x_2$.\medskip

 Due to Theorem \ref{th.mokobre}, the focus on the Strong Harnack Inequality is now shifted to the Weak Harnack Inequality, which is easier to establish.
 As already anticipated, the latter is based on the lower bound
 \eqref{lowboundkeps} as we now briefly describe.
 First, we remark that the proof of \eqref{lowboundkeps} is a two-line comparison argument:
 it suffices to apply $\LL-\e$ on both sides of \eqref{lowboundkeps} to see that they produce the same result, namely $-\e\,u$;
 then one uses the Weak Maximum Principle, since the right-hand side is null on $\de\Omega$ whereas the left-hand side
 is nonnegative. Secondly, with inequality \eqref{lowboundkeps} at hands and the \emph{strict positivity} of $k_\e$ (a consequence
 of the SMP), it is not difficult to prove that $u(x_0)$ dominates the $L^1_\textrm{loc}$-norm of $u$, on suitable compact sets.
 Then, due to the equivalence of the $L^1_{\textrm{loc}}$ and $C^\infty$ topologies
 on the space of the $\LL$-harmonic functions (this fact deriving from (HY)), one can infer the following:
\begin{theorem}[Weak Harnack inequality for derivatives]\label{lem.crustimabassoTHEO}
 Let $\LL$ satisfy \emph{(NTD)}, \emph{(HY)}
 and \emph{(HY)}$_\e$.
 Then, for every connected open set $O\subseteq \RN$, every
 compact subset $K$ of $O$, every $m\in \mathbb{N}\cup\{0\}$ and every $y_0\in O$,
 there exists a positive $C(y_0)=C(\LL,\e,O,K,m,y_0)$ such that
\begin{equation}\label{HarnackdeboleEQ1}
 \sum_{|\alpha|\leq m}
 \sup_{x\in K} \Big| \frac{\de^\alpha u(x)}{\de x^\alpha}\Big|
\leq C(y_0)\,u(y_0),
\end{equation}
 for every nonnegative $\LL$-harmonic function $u$ in $O$.
\end{theorem}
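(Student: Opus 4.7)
The plan is to combine three ingredients from the earlier parts of the paper: (i) the pointwise lower bound \eqref{lowboundkeps}, proven in Lemma~\ref{lem.crustimabasso}, which controls $u(x_0)$ from below by an integral of $u$ against the Green kernel $k_\e$ of $\LL_\e = \LL - \e$; (ii) the strict positivity and smoothness of $k_\e$ off the diagonal on a basic regular set (Theorem~\ref{th.greeniani}); and (iii) the topological fact recorded in Remark~\ref{rem.topolocinfyt}, namely that assumption (HY) forces the $L^1_\loc$ and $C^\infty$ topologies to coincide on the space $\H_\LL(\Omega)$. The proof will split naturally into a \emph{local} step (proving the inequality when $K$ is a small compact neighbourhood of $y_0$) and a \emph{global} step, which upgrades it to arbitrary $K$ via a chain-of-neighbourhoods argument.

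For the local step, fix $y_0\in O$ and, via Lemma~\ref{th.localDiri}, choose a bounded connected basic open set $\Omega$ with $y_0\in\Omega$ and $\overline{\Omega}\subset O$, on which the Green kernel $k_\e$ of $\LL_\e$ is available. Evaluating \eqref{lowboundkeps} at $x_0=y_0$ and restricting the integral to a fixed compact set $K'\subset \Omega\setminus\{y_0\}$, the continuity and strict positivity of $k_\e(y_0,\cdot)\,V$ on $K'$ (from Theorem~\ref{th.greeniani}) furnish a constant $c(K')>0$ with
\begin{equation*}
 u(y_0)\;\geq\; c(K')\,\|u\|_{L^1(K')}
\end{equation*}
for every nonnegative $u\in\H_\LL(O)$. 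By Remark~\ref{rem.topolocinfyt}, for any compact $K''$ contained in the interior of $K'$ and any $m\in\N\cup\{0\}$, the $L^1_\loc$--$C^\infty$ topological equivalence on $\H_\LL(\Omega)$ yields a constant $C_1$ with
\begin{equation*}
 \sum_{|\alpha|\leq m}\sup_{x\in K''}\Big|\frac{\de^\alpha h(x)}{\de x^\alpha}\Big|
 \;\leq\; C_1\,\|h\|_{L^1(K')}\qquad \forall\,h\in\H_\LL(\Omega).
\end{equation*}
Composing the two inequalities proves \eqref{HarnackdeboleEQ1} in the special case $K=K''$.

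For the global step, let $K\subset O$ be an arbitrary compact set. By connectedness of $O$ one constructs a finite chain of basic sets $\Omega_0,\ldots,\Omega_n$ (each of the type supplied by Lemma~\ref{th.localDiri}) with inner compacta $K_0,\ldots,K_n$ arising from the local step, such that $y_0\in K_0$, consecutive pairs satisfy $K_j\cap K_{j+1}\neq\emptyset$, and $K\subseteq K_0\cup\cdots\cup K_n$. Applying the local step with $m=0$ at each link, and using a point of $K_j\cap K_{j+1}$ as the ``base point'' for $\Omega_{j+1}$, one iteratively bounds $\sup_{K_j}u\leq M_j\,u(y_0)$. A final application of the local step with general $m$ on each $\Omega_j$ upgrades these $\sup$-bounds into bounds on $\sum_{|\alpha|\leq m}\sup_{K_j}|\de^\alpha u|$, delivering \eqref{HarnackdeboleEQ1}.

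The main obstacle is the qualitative input (iii): the equivalence of the $L^1_\loc$ and $C^\infty$ topologies on $\H_\LL(\Omega)$. This is where hypoellipticity (HY) is genuinely used, via a closed-graph argument (distributional $\LL$-harmonic functions are smooth by (HY), and $L^1_\loc$-limits of distributional $\LL$-harmonic functions remain distributional $\LL$-harmonic, hence belong to $\H_\LL(\Omega)$). Once that equivalence is in hand, the rest---inequality \eqref{lowboundkeps}, the strict positivity of $k_\e$, and the finite chain covering---is essentially routine. Note that assumption (HY)$_\e$ is used only indirectly, through the existence and regularity of $k_\e$ granted by Theorem~\ref{th.greeniani}.
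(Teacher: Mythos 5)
Your argument is correct and is essentially the paper's own proof: it combines the same three ingredients (the lower bound of Lemma \ref{lem.crustimabasso}, the strict positivity and smoothness of $k_\e$ off the diagonal from Theorem \ref{th.greeniani}, and the $L^1_{\loc}$--$C^\infty$ topological equivalence of Remark \ref{rem.topolocinfyt}) in the same way, followed by the same compactness/chain covering argument. The only loosely stated point is the first link of your chain when $y_0\in K$ (your local step only controls compacta avoiding the base point), which is repaired exactly as in the paper by relaying through an auxiliary point $y_0'\notin K$ and applying the $m=0$ estimate to the singleton $\{y_0'\}$.
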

 We remark that topological properties similar to those
 mentioned above for the space of the $\LL$-harmonic functions are also valid when $\LL$ in \eqref{mainLL}
 is \emph{not necessarily hypoelliptic}, provided that it possesses a
 positive global fundamental solution: see e.g., \cite{BattagliaBonfiglioli} by
 the first and third named authors, where Montel-type results are proved (in the sense of \cite{Montel}), jointly with the equivalence of the topologies
 induced on $\mathcal{H}_\LL(\Omega)$ by $L^1_{\textrm{loc}}$ and by $L^\infty_{\textrm{loc}}$,
under no hypoellipticity assumptions.\medskip

 \textbf{Acknowledgements.}
 We wish to thank Alberto Parmeggiani for many helpful discussions on hypoellipticty,
 leading to an improvement of the manuscript.

\section{The Strong Maximum Principle for $\LL$}\label{sec:SMP} 
 The aim of this section is to prove the Strong Maximum Principle for $\LL$
 in Theorem \ref{th:SMP}. Clearly, a fundamental step is played by a suitable Hopf-type lemma, furnished
 in Lemma \ref{lem_Hopf}.
 (For a recent interesting survey on maximum principles and Hopf-type results for 
 uniformly elliptic operators, see L\'opez-G\'omez \cite{lopez-Gomez}.)

 First the relevant definition and notation: given an open set $\Omega\subseteq \RN$
 and a relatively closed set $F$ in $\Omega$, we say that $\nu$ is \emph{externally orthogonal to $F$ at $y$}, and we write
 \begin{equation}\label{estnorm}
  \textrm{$\nu\bot\,F$ at $y$,}
 \end{equation}
 if: $y\in \Omega \cap \de F$; $\nu\in \RN\setminus\{0\}$; $\overline{B(y+\nu,|\nu|)}$
 is contained in $\Omega$ and it intersects $F$ only at $y$. Here and throughout $B(x_0,r)$ is the Euclidean
 ball in $\R^N$ of centre $x_0$ and radius $r> 0$; moreover $|\cdot|$ will denote the Euclidean norm on $\RN$.
 The notation \eqref{estnorm} does not explicitly refer to externality, 
 but this will not create any confusion in the sequel. It is not difficult to recognize that if $\Omega$ is connected and if $F$
 is a proper (relatively closed) subset of $\Omega$, then there always exist couples $(y,\nu)$
 such that $\nu\bot\,F$ at $y$.

 Finally, throughout the paper we write $\de_i$ for $\dfrac{\de}{\de x_i}$.
\begin{lemma}[\textbf{of Hopf-type for $\LL$}]\label{lem_Hopf}
 Suppose that $\LL$ is an operator of the form
 \eqref{mainLL} with $C^1$ coefficients $V>0$ and $a_{i,j}$,
 and let us set $A(x):=(a_{i,j}(x))_{i,j}$.
 (We recall that $A(x)\geq 0$ for every $x\in\RN$.)
 Let
 $\Omega\subseteq\RN$ be a connected open set. Then,
 the following facts hold.
\begin{enumerate}
  \item[\emph{(1)}]  Let $u\in C^2(\Omega,\R)$ be such that $\LL u\geq 0$ on $\Omega$;
  let us suppose that
 \begin{equation}\label{Hopf.ortFUU}
 F(u):=\Big\{x\in \Omega :\,u(x)=\max_\Omega u \Big\}
 \end{equation}
  is a proper subset of $\Omega$.
 Then
 \begin{equation}\label{Hopf.ort}
    \langle A(y)\nu,\nu\rangle=0\quad
    \text{whenever $\nu\bot\,F(u)$ at $y$}.
 \end{equation}

  \item[\emph{(2)}]
  Suppose $c\in C(\RN,\R)$ is nonnegative on $\RN$, and let us set
 $\LL_c:=\LL-c$. Let $u\in C^2(\Omega,\R)$ be such that $\LL_c u\geq 0$ on $\Omega$;
  let us suppose that $F(u)$ in \eqref{Hopf.ortFUU}
  is a proper subset of $\Omega$ and that $\max_\Omega u\geq 0$.
 Then \eqref{Hopf.ort} holds true.
\end{enumerate}
\end{lemma}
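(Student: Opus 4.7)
My plan is to argue by contradiction via a Hopf-type barrier, adapting to our degenerate setting the classical scheme for uniformly elliptic operators. Suppose, towards a contradiction, that $\nu\bot\,F(u)$ at $y$ while $\langle A(y)\nu,\nu\rangle>0$. Set $x_0:=y+\nu$, $R:=|\nu|$ and $B:=B(x_0,R)$, so that by hypothesis $\overline B\subset\Omega$ and $\overline B\cap F(u)=\{y\}$. I would introduce the standard exponential barrier
\[
 \phi_\alpha(x):=e^{-\alpha|x-x_0|^2}-e^{-\alpha R^2},
\]
which is positive on $B$, vanishes on $\de B$, and satisfies $\nabla\phi_\alpha(y)=2\alpha\,\nu\,e^{-\alpha R^2}$. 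Next I rewrite $\LL$ in non-divergence form $\LL=\sum_{i,j}a_{i,j}\,\de_i\de_j+\sum_{i} b_i\,\de_i$, where $b=(b_i)$ is continuous by the $C^1$ regularity of $V$ and $A$. A direct computation yields
\[
 \LL\phi_\alpha(x)=\bigl[4\alpha^2\,\langle A(x)(x-x_0),x-x_0\rangle+O(\alpha)\bigr]\,e^{-\alpha|x-x_0|^2},
\]
whose leading coefficient at $y$ equals $4\alpha^2\langle A(y)\nu,\nu\rangle>0$ under the contradiction hypothesis. Continuity of $A$ and $b$ then lets me fix $\alpha$ large enough and $r>0$ small enough so that $\LL\phi_\alpha>0$ on the lens-shaped region $U:=B(y,r)\cap B$.

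The comparison step now closes the argument. Since $F(u)\cap\overline B=\{y\}$, the compact set $\de B(y,r)\cap\overline B$ stays bounded away from $F(u)$, so there is $\delta>0$ with $u\leq M-\delta$ on it, where $M:=\max_\Omega u=u(y)$. Choosing $\varepsilon>0$ so small that $\varepsilon\,\sup_{\overline U}\phi_\alpha\leq\delta$, the function $w:=u+\varepsilon\phi_\alpha$ satisfies $w\leq M$ on $\de U$ with equality at $y$, and $\LL w\geq\varepsilon\,\LL\phi_\alpha>0$ on $U$. The semi-definiteness $A\geq 0$ then forbids any interior maximum of $w$: at a would-be interior max $p$ one has $\nabla w(p)=0$ and $D^2w(p)\leq 0$, so $\LL w(p)=\mathrm{tr}(A(p)D^2w(p))\leq 0$, a contradiction. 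Hence $y\in\de U$ is a maximum of $w$ on $\overline U$. Since $y$ is an interior extremum of $u$ on $\Omega$, we have $\nabla u(y)=0$, whence the derivative of $w$ at $y$ in the direction $\nu/|\nu|$ (which points from $y=x_0-\nu$ into $U$) equals $\varepsilon\,\nabla\phi_\alpha(y)\cdot\nu/|\nu|=2\alpha\varepsilon|\nu|\,e^{-\alpha R^2}>0$, contradicting the maximality of $w$ at $y\in\de U$.

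For part (2) the construction is identical; only the interior-maximum remark is replaced by its variant for $\LL_c=\LL-c$ with $c\geq 0$. At any candidate interior maximum $p$ of $w$, the inequality $\LL_c w(p)>0$ combined with $\LL w(p)\leq 0$ forces $c(p)\,w(p)<0$, hence $w(p)<0$; the hypothesis $M\geq 0$ then precludes an interior maximum and the rest of the argument goes through verbatim. The main technical obstacle is the localization: in the uniformly elliptic setting one may run the barrier on the full annulus $B\setminus\overline{B(x_0,R/2)}$, using uniform ellipticity to secure $\LL\phi_\alpha>0$ throughout; here the degeneracy of $A$ away from $y$ rules out any such global choice and forces me to work on a small set $U$ concentrated around the single point $y$, on which the hypothesis $\langle A(y)\nu,\nu\rangle>0$ propagates only by continuity, so the quantitative balance between the size of $r$, the magnitude of $\alpha$, and the constants $\delta$, $\varepsilon$ has to be organized carefully.
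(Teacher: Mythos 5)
Your proof is correct and follows essentially the same Hopf-barrier scheme as the paper: the same exponential barrier, the same use of $A\geq 0$ (trace of a positive semi-definite times a negative semi-definite matrix) to exclude an interior maximum of the perturbed function $u+\varepsilon\phi_\alpha$, and the same treatment of the zero-order term in part (2) via $\phi_\alpha(y)=0$ together with $M\geq 0$. The only real difference is cosmetic: the paper runs the comparison on a small ball \emph{centered at} $y$, so that $y$ is an interior maximum point of the perturbed function and the contradiction with strict subharmonicity is immediate, whereas you work on the classical lens $B(y,r)\cap B$ and close with the directional-derivative step at the boundary point $y$, which is legitimate precisely because $\nabla u(y)=0$ at the interior maximum point $y$ of $u$ — both routes are sound.
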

\begin{proof}
 We begin by proving part (1) in the statement of the Lemma, from which we also inherit
 the notation and hypotheses on $u$ and $F(u)$. Notice that  the assumption ensures that
 $M:=\max_\Omega u\in \R$.
 To this aim, let us assume by contradiction that
 \begin{equation} \label{lem_Hopf.EQ1}
  \langle A(y)\nu,\nu\rangle > 0, \quad \text{for some $\nu\bot\,F(u)$ at $y$.}
 \end{equation}
  We define a smooth function $w: \RN \longto \R$ as follows
 $$w(x) := e^{-\lambda|x - (y+\nu)|^2} - e^{-\lambda|\nu|^2},$$
 where $\lambda$ is a positive real number chosen in a moment.
 We set $b_j:=\sum_{i=1}^N\de_i(V\,a_{i,j})/V$, so that
 $\LL=\sum_{i,j}a_{i,j}\de_{i,j}+\sum_j b_j\de_j$.
 A simple computation shows that
 \begin{equation} \label{lem_Hopf.EQ1ccc}
 \LL w(y) = \lambda^2e^{-\lambda|\nu|^2}\left(4\langle A(y)\nu,\nu\rangle
 - \frac{2}{\lambda}\sum_{j = 1}^N\big(a_{j,j}(y) - b_j(y)\nu_j\big)\right),
 \end{equation}
  and thus, by \eqref{lem_Hopf.EQ1}, it is possible to choose $\lambda > 0$
 in such a way that $\LL w(y) > 0.$

 By the continuity of $\LL w$, we can then find a positive real number
 $\delta$ such that $V:= B(y,\delta)$ is compactly contained in
 $\Omega$ and $\LL w > 0$ on $V$.
 We now define, for $\e > 0$,
 a function $v_{\e}: \overline{V}\to\R$ by setting
 $v_{\e}:= u + \e \,w$.
 Clearly, $v_{\e} \in C^2(V,\R)\cap C(\overline{V},\R)$, and we claim that
 the maximum of $v_\e$ on $\overline{V}$ is attained in $V$.

 Indeed, let us consider the splitting of $\partial V$ given by the two sets
 $K_1 := \partial V\cap \overline{B(y+\nu,|\nu|)}$ and $K_2:=\partial V\setminus K_1.$
 For every $x \in K_2$, one has
 $$v_\e(x) = u(x) + \e w(x) < u(x) \leq M.$$
 On the other hand, for all $x \in K_1$,
 we have
 $$v_\e(x) \leq \max_{K_1}u + \e\max_{K_1}w,$$
 and since $\max_{K_1}u < M$
 (observe that $u<M$ outside $F(u)$ and that
 $K_1$ is a compact set contained in $\Omega \setminus F(u)$),
 it is possible to choose $\e>0$ so small that
 $v_\e < M$ on $K_1$.
 By gathering together these facts we see that,
 for every $x\in \de V$ (note that $y \in F(u)$ and $w(y) = 0$)
 $$v_\e(x) < M = u(y) = v_\e(y) \leq \max_{\overline{V}}v_\e, $$
 and this proves the claim.
 From $\LL v_\e=\LL u+\e\,\LL w\geq \e\,\LL w$ (and the latter is $>0$ on $V$) the function $v_\e$ is a
 \emph{strictly} $\LL$-subharmonic function on $V$, that is, $\LL v_\e > 0$ on $V$, admitting
 a maximum point on the open set $V$, say $p_0$.
 Then we have (recall that $A(p_0)\geq 0$ and notice that
 $\nabla v_\e(p_0)=0$ and  $H(p_0):=(\de_{i,j} {v_\e}(p_0))_{i,j} \leq 0$)
 \begin{align}\label{fine.hopf}
  0<\LL v_\e(p_0) =
  \sum_{i,j}a_{i,j}(p_0)\de_{i,j}v_\e(p_0)=
  \text{trace}\big(A(p_0)\cdot H(p_0)\big)
  \leq 0,
 \end{align}
 which is clearly a contradiction.
 \medskip

 Part (2) in the statement of the Lemma can be proved in a totally analogous way:
 we replace $\LL$ with $\LL_c$ and we
 notice that $w(y)=0$ so that $\LL_c w(y)=\LL w(y)$, and
 \eqref{lem_Hopf.EQ1ccc} is left unchanged.
 Arguing as above, we let again $p_0\in V$ be such that $v_\e(p_0)=\max_{\overline{V}} v_\e$.
 This gives $v_\e(p_0)\geq v_\e(y)=u(y)=M$. Hence \eqref{fine.hopf} becomes
 \begin{align*}
  0<\LL_c v_\e(p_0) =
  \text{trace}\big(A(p_0)\cdot H(p_0)\big) -c(p_0)\,v_\e(p_0)
  \leq -c(p_0)\,M,
 \end{align*}
 where in the last inequality we used the assumption $c\geq0$ and the fact that
 $v_\e(p_0)\geq M$.
 By the assumption $M\geq 0$ (and again by the assumption on the sign of $c$), we have 
 $-c(p_0)\,M\leq 0$, and we obtain another contradiction.
\end{proof}
 We are now in a position to provide the
\begin{proof}[Proof (of Theorem \ref{th:SMP})]
 Let $\LL$ be as in the statement of Theorem \ref{th:SMP};
 suppose that $\Omega\subseteq\RN$ is a connected open set
 and that $u\in C^2(\Omega,\R)$ satisfies $\LL u\geq 0$ on $\Omega$
 and $u$ attains a maximum in $\Omega$. We set
 $$F(u):=\Big\{x\in \Omega : \, u(x)=\max_\Omega u \Big\}.$$
 By assumption $F(u)\neq \varnothing$, say $\xi\in F(u)$. We show that $F(u)=\Omega$.

 To this aim, let us rewrite $\LL$ as follows:
 $$\LL=
 \frac{1}{V}\sum_{i,j} \de_i \Big(V\,a_{i,j}\,\de_j\Big)=
 \frac{1}{V} \sum_{i,j}  V\,\de_i( a_{i,j}\de_j)+
 \sum_{i,j} \frac{\de_i V}{V}\, a_{i,j}\de_j=
 \sum_{i,j}  \de_i( a_{i,j}\de_j)+
 \sum_{j} b_j\,\de_j,
 $$
 where $b_j:=\frac{1}{V}\sum_{i=1}^N\de_i V\, a_{i,j}$ (for $j=1,\ldots,N$).
 Let us consider the vector fields
 \begin{align}\label{CampiXX}
   X_i:= \sum_{j=1}^N  a_{i,j}\,\de_j,\quad i=1,\ldots,N,\qquad
   X_0:=\sum_{j=1}^N b_j\,\de_j.
 \end{align}
 We explicitly remak the following useful fact: $X_0$
 is a linear combination (with smooth-coefficients) of $X_1,\ldots,X_N$; indeed
\begin{equation}\label{X0lincopmbxi}
 X_0=\sum_{j=1}^N b_j\,\de_j=\sum_{j=1}^N \frac{1}{V}\sum_{i=1}^N\de_i V\, a_{i,j}\,\de_j
  =\sum_{i=1}^N\frac{\de_i V}{V}\,\sum_{j=1}^N  a_{i,j}\,\de_j =\sum_{i=1}^N\frac{\de_i V}{V}\,X_i.
\end{equation}
 Summing up, we have written $\LL$ as follows
 $$\LL u =\sum_{i=1}^N\de_i(X_iu)+\sum_{i=1}^N\frac{\de_i V}{V}\,X_iu,\quad \forall\,\,u\in C^2. $$
 Thanks to the assumption (NTD) of non-total degeneracy of $\LL$
   and due to the smoothness of its coefficients, we are entitled to use
   a notable result \cite[Theorem 2]{Amano} by Amano, which states that
  the hypoellipticity assumption (HY) ensures the controllability of the ODE system
\begin{equation}\label{X0lincopmbxiamano}
 \dot \gamma= \xi_0 X_0(\gamma)+\sum_{i=1}^N \xi_i\,X_i(\gamma),\quad (\xi_0,\xi_1,\ldots,\xi_N)\in \R^{1+N},
\end{equation}
 on every open and connected subset of $\RN$
 (see e.g., \cite[Chapter 3]{Jurdjevic} for the notion of controllability).
 Since $\Omega$ is open and connected, this implies that
 any point of $\Omega$ can be joined to $\xi$ by a continuous curve $\gamma$
 contained in $\Omega$ which is piecewise an integral curve of
 a vector field belonging to $\mathcal{V}:=\textrm{span}_\R\{X_0,X_1,\ldots,X_N\}$.
 It then suffices to prove that if $\gamma$ is
 an integral curve of a vector field $X\in \mathcal{V}$ starting at a point of $F(u)$ (which is non-empty), then
 $\gamma(t)$ remains in $F(u)$ for every admissible time $t$. In this case
 we say that $F(u)$ is \emph{$X$-invariant}.

 By a result of Bony, \cite[Théorème 2.1]{Bony}, the $X$-invariance of $F(u)$
 is equivalent to the tangentiality of $X$ to $F(u)$: this latter condition means that
\begin{equation}\label{daprovSMP1primaa}
 \lan X(y),\nu\ran =0\quad \text{whenever $\nu\bot\,F(u)$ at $y$}.
\end{equation}
 Hence, by all the above arguments, the proof of the SMP is complete if we show that 
 \eqref{daprovSMP1primaa} is fulfilled by any $X\in \mathcal{V}$.
  Since $X$ is a linear combination of $X_0,X_1,\ldots,X_N$ and due to \eqref{X0lincopmbxi}, it suffices to prove
  this identity when $X$ is replaced by any element of $\{X_1,\ldots,X_N\}$.
  Due to identity  \eqref{Hopf.ort} in the Hopf-type Lemma \ref{lem_Hopf}, it is therefore
  sufficient to show that for every $i\in\{1,\ldots,N\}$ and every $x\in \RN$, there exists
  $\lambda_i(x)>0$ such that
  \begin{equation}\label{daprovSMP1}
   \lan X_i(x),\nu\ran^2 \leq \lambda_i(x)\,\langle A(x)\nu,\nu\rangle\quad
    \text{for every $\nu\in\RN$}.
  \end{equation}
 Indeed, \eqref{daprovSMP1} together with \eqref{Hopf.ort} implies that 
 the left-hand side of \eqref{daprovSMP1} is null whenever $\nu\bot F(u)$ at $y$, which is precisely 
 \eqref{daprovSMP1primaa} for $X\in\{X_1,\ldots,X_N\}$.
 Due to the very definition of $X_i$, inequality \eqref{daprovSMP1} boils down to proving that, given
 a real symmetric positive semidefinite matrix $A=(a_{i,j})$, for every $i$ there exists $\lambda_i>0$ such that
 $$\Big(\sum_{j}a_{i,j}\nu_j\Big)^2\leq \lambda_i\,\sum_{i,j} a_{i,j}\,\nu_i\,\nu_j
 \quad \text{for every $\nu\in\RN$},$$
 which is a consequence of the Cauchy-Schwarz inequality and the characterization
 (for a symmetric $A\geq 0$)
 of $\textrm{ker}(A)$ as $\{x\in\RN:\lan Ax,x\ran=0\}$.\medskip

 This proves part (1) of Theorem \ref{th:SMP}. As for part (2), let
 $c$ be smooth and nonnegative on $\RN$, and let us set $\LL_c:=\LL-c$.
 Suppose $u\in C^2(\Omega,\R)$ satisfies $\LL_c u\geq 0$ on $\Omega$
 and that it attains a nonnegative maximum in $\Omega$. For $F(u)\neq \varnothing$ as above, we show again that
 $F(u)=\Omega$. The hypoellipticity and non-total degeneracy of $\LL$
 ensure again (by Amano's cited result for $\LL$) the controllability of
 system \eqref{X0lincopmbxiamano}. This again grants a connectivity property of $\Omega$
 by means of continuous curves, piecewise integral curves of elements in the above vector space $\mathcal{V}$.
 By Bony's quoted result on invariance/tangentiality, the needed identity $F(u)=\Omega$ follows if we show again that
  \eqref{daprovSMP1primaa} is fulfilled when $X$ is replaced by $X_i$, for $i=1,\ldots,N$ (the case $i=0$ deriving as above from
  \eqref{X0lincopmbxi}).

 Now, by part (2) of Lemma \ref{lem_Hopf},
 it is at our disposal a Hopf-type Lemma for operators
 of the form $\LL_c$, and for functions $u$ such that $\LL_c u\geq 0$
 and attaining a \emph{nonnegative} maximum. In other words, we know that
 \eqref{Hopf.ort} holds true, again as in the previous case (1).  The validity of \eqref{daprovSMP1} allows us to end the proof,
 as in the previous part.
\end{proof}
 A close inspection to the above proof shows that we have indeed
 demonstrated the following result as well (replacing the hypothesis of hypoellipticity 
 of $\LL$ by that of $\LL-c$), since Amano's results on hypoellipticity/controllobility
 are independent of the presence of a zero-order term:
\begin{remark}\label{PMFancheconc}
 \emph{Suppose that $\LL$ is an operator of the form
 \eqref{mainLL}, with $C^\infty$ coefficients $V>0$ and $(a_{i,j})\geq 0$,
 and that it satisfies \emph{(NTD)}.
 Let $c\in C^\infty(\RN,\R)$ be nonnegative and
 suppose that the operator $\LL_c:=\LL-c$ is hypoelliptic
 on every open subset of $\RN$.}

 \emph{If
 $\Omega\subseteq\RN$ is a connected open set, then any function
 $u\in C^2(\Omega,\R)$ satisfying $\LL_c u\geq 0$ on $\Omega$ and
 attaining a nonnegative maximum in $\Omega$ is constant
 throughout $\Omega$.}
\end{remark}
 As a Corollary of Theorem \ref{th:SMP} we immediately get the following result.
\begin{corollary}[\textbf{Weak Maximum Principle for $\LL$}]\label{th.WMPPP}
  Suppose that $\LL$ is an operator of the form
 \eqref{mainLL}, with $C^\infty$ coefficients $V>0$ and $(a_{i,j})\geq 0$,
 and that it satisfies \emph{(NTD)} and \emph{(HY)}. Suppose also that
 $c\in C^\infty(\RN,\R)$ is nonnegative on $\RN$ (the case $c\equiv 0$ is allowed),
 and let us set $\LL_c:=\LL-c$.
 Then,
 $\LL_c$ satisfies the \emph{Weak Maximum Principle} 
 on every bounded open set
   $\Omega\subseteq\RN$, that is:
\begin{equation}\label{WMP}
 \left\{
   \begin{array}{ll}
     u\in C^2(\Omega,\R)\\
     \LL_c u\geq 0\,\,\text{on $\Omega$} \\
     \limsup\limits_{x\to x_0} u(x)\leq 0\,\,\text{for every $x_0\in\de\Omega$}
   \end{array}
 \right.
 \qquad\Longrightarrow\qquad
 u\leq 0\,\,\text{on $\Omega$.}
\end{equation}
 As a consequence, if $\Omega\subseteq\RN$ is bounded, and if
 $u\in C^2(\Omega)\cap C(\overline{\Omega})$ is nonnegative and
 such that $\LL_c u\geq 0$ on $\Omega$, then one has
 $ \sup_{\overline{\Omega}} u=\sup_{\de\Omega} u. $
\end{corollary}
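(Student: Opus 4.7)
The plan is to derive both assertions of the corollary directly from part (2) of Theorem \ref{th:SMP} (the Strong Maximum Principle for $\LL_c$ with a nonnegative maximum). I will argue by contradiction for the implication \eqref{WMP}, then deduce the final assertion by the usual trick of subtracting the boundary supremum.

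First I would reduce to the case of a connected $\Omega$, since the hypotheses and the conclusion of \eqref{WMP} are inherited by each connected component of a bounded open set. So assume $\Omega$ is connected and bounded, and suppose by contradiction that there exists $x^{\ast}\in\Omega$ with $u(x^{\ast})>0$. Extend $u$ to an upper semi-continuous function $\tilde{u}$ on the compact set $\overline{\Omega}$ by setting
\[
\tilde{u}(x_0):=\limsup_{\Omega\ni x\to x_0}u(x)\leq 0\qquad\text{for every $x_0\in\de\Omega$,}
\]
where the inequality is precisely the boundary hypothesis. Since $\tilde{u}$ is upper semi-continuous on the compactum $\overline{\Omega}$, it attains its maximum $M$ there; because $\tilde{u}\leq 0$ on $\de\Omega$ and $\tilde u(x^{\ast})=u(x^{\ast})>0$, every maximizer must lie in $\Omega$, and $M>0$.

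Thus $u\in C^{2}(\Omega,\R)$ satisfies $\LL_{c}u\geq 0$ on the connected open set $\Omega$ and attains the \emph{nonnegative} maximum $M$ at some interior point. Part (2) of Theorem \ref{th:SMP} then forces $u\equiv M$ throughout $\Omega$. But then $\limsup_{x\to x_0}u(x)=M>0$ for every $x_0\in\de\Omega$, contradicting the boundary hypothesis in \eqref{WMP}. This proves the implication.

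For the final assertion, let $u\in C^{2}(\Omega)\cap C(\overline{\Omega})$ be nonnegative with $\LL_{c}u\geq 0$, and set $S:=\sup_{\de\Omega}u\geq 0$ (nonnegative because $u\geq 0$). The function $v:=u-S$ lies in $C^{2}(\Omega)\cap C(\overline{\Omega})$ and
\[
\LL_{c}v=\LL_{c}u-\LL_{c}(S)=\LL_{c}u+c\,S\geq 0,
\]
since $c\geq 0$ and $S\geq 0$. Moreover $v\leq 0$ on $\de\Omega$ by construction, so the already-established implication \eqref{WMP} yields $v\leq 0$ on $\Omega$, i.e.\ $u\leq S$ on $\Omega$. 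Combined with the trivial inequality $\sup_{\overline{\Omega}}u\geq \sup_{\de\Omega}u=S$, this gives $\sup_{\overline{\Omega}}u=\sup_{\de\Omega}u$, completing the proof.

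There is no real obstacle here; the only minor care needed is in the reduction-to-connected-component step (so that Theorem \ref{th:SMP} applies) and in justifying the trick $v=u-S$, where nonnegativity of both $c$ and $S$ is exactly what makes $\LL_{c}v\geq 0$ propagate from $\LL_{c}u\geq 0$.
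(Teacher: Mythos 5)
Your proof is correct and follows essentially the same route as the paper: both arguments use compactness of $\overline{\Omega}$ to locate an interior nonnegative maximum (handling connected components so that part (2) of Theorem \ref{th:SMP} applies), invoke the Strong Maximum Principle to force constancy and contradict the boundary condition, and then obtain the final assertion via the same subtraction $u-\sup_{\de\Omega}u$ with $\LL_c(u-S)\geq cS\geq 0$. The only cosmetic difference is that you phrase the first step as a contradiction via an upper semicontinuous extension of $u$ to $\overline{\Omega}$, whereas the paper directly picks a point realizing $\sup_\Omega u$ as a boundary or interior limsup; the substance is identical.
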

\begin{proof}
 Suppose that the open set $\Omega\subset\RN$ is bounded and $u$ is as in the left-hand side of
 \eqref{WMP}. Let $x_0\in \overline{\Omega}$ be such that
\begin{equation}\label{weier.max}
 \limsup_{x\to x_0} u(x)=\sup_{\Omega}u.
\end{equation}
 If $x_0\in\de\Omega$, then \eqref{WMP} ensures that $\limsup_{x\to x_0} u(x)\leq 0$, so that (due to
 \eqref{weier.max}) $\sup_\Omega u\leq0$, proving the right-hand side of \eqref{WMP}.
 If $x_0\in\Omega$, then \eqref{weier.max} gives $u(x_0)=\max_\Omega u$.
 If $u(x_0)<0$, we conclude as above that $\max_\Omega u=u(x_0)<0$. If $u(x_0)\geq 0$,
 we consider $\Omega_0\subseteq\Omega$ the connected component of $\Omega$ containing $x_0$, and,
 thanks to part (2) of the Strong Maximum Principle in Theorem \ref{th:SMP},
 the existence of an interior maximum point of $u$ on $\Omega \supseteq \Omega_0$
 (and the fact that $u(x_0)\geq 0$)
 ensures that $u\equiv u(x_0)$ on $\Omega_0$. Let us take any $\xi_0\in \de \Omega_0$; we have
 $$\max_{\Omega}u=u(x_0)=\limsup_{\Omega_0\ni x\to \xi_0}u(x)\leq
 \limsup_{\Omega\ni x\to \xi_0}u(x)\leq  0,  $$
 where the last inequality follows from $\de \Omega_0\subseteq \de\Omega$
 and from the assumption in \eqref{WMP}.

 We remark that when $c\equiv 0$ the proof is slightly simpler, as an interior maximum of
 $u$ propagates up to the boundary, regardless of the sign of this maximum.

 Finally we prove the last assertion of the corollary.
 Let $\Omega\subseteq\RN$ be bounded and let $u\in C^2(\Omega)\cap C(\overline{\Omega})$ 
 be nonnegative on $\overline{\Omega}$ and satisfying $\LL_c u\geq 0$ on $\Omega$; then we set $M:=\sup_{\de\Omega} u$
 and we observe that $M\geq 0$ since this is true of $u$. 
 We have (recall that $c\geq 0$)
 $$\LL_c(u-M)=\LL_c u-\LL_c M\geq -\LL_c M=-(\LL -c)M=cM\geq 0.  $$
 Since (by definition of $M$) we have $u-M\leq 0$ on $\de\Omega$ (and $u-M$ is continuous up to $\de\Omega$),
 we can apply \eqref{WMP} to get $u-M\leq 0$, that is $u\leq \sup_{\de\Omega} u$ on $\Omega$.
 This clearly proves the needed $ \sup_{\overline{\Omega}} u=\sup_{\de\Omega} u $.
\end{proof}
 Arguing as in the previous proof (and exploiting Remark \ref{PMFancheconc}
 instead of Theorem \ref{th:SMP}-(2)) we also get the following result,
 where we alternatively replace the hypothesis of hypoellipticity
 of $\LL$ by that of $\LL-c$:
\begin{remark}\label{PMFancheconc2}
 \emph{Suppose that $\LL$ is an operator of the form
 \eqref{mainLL}, with $C^\infty$ coefficients $V>0$ and $(a_{i,j})\geq 0$,
 and that it satisfies \emph{(NTD)}.
 Let $c\in C^\infty(\RN,\R)$ be nonnegative and
 suppose that the operator $\LL_c:=\LL-c$ is hypoelliptic
 on every open subset of $\RN$.}

 \emph{Then $\LL_c$ satisfies the Weak Maximum Principle
 on every bounded open set $\Omega\subseteq\RN$.}

 \emph{As a consequence, if $\Omega\subseteq\RN$ is bounded, and if
 $u\in C^2(\Omega)\cap C(\overline{\Omega})$ is nonnegative and
 such that $\LL_c u\geq 0$ on $\Omega$, then one has}
 $ \sup_{\overline{\Omega}} u=\sup_{\de\Omega} u. $
\end{remark}
\section{Analytic coefficients: A Unique Continuation result for $\LL$}\label{sec:Harnack_analytic}
 In this short section, by means of the ideas of controllability/propagation introduced in the previous section,
 we prove the following result.
\begin{theorem}[\textbf{Unique Continuation for $\LL$}]\label{th:Uniquecont}
 Suppose that $\LL$ is an operator of the form
 \eqref{mainLL} satisfying assumptions \emph{(NTD)} and \emph{(HY)}.
 Suppose that $\LL$ has $C^\omega$ coefficients $a_{i,j}$ and $V$.

  Let
 $\Omega\subseteq\RN$ be a connected open set.
 Then any $\LL$-harmonic function
 on $\Omega$ vanishing on some non-empty open subset of $\Omega$ is identically zero on $\Omega$.
\end{theorem}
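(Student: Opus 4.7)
The plan is to adapt the tangentiality/controllability scheme used in the proof of Theorem~\ref{th:SMP}, replacing the Hopf-type Lemma~\ref{lem_Hopf} with the classical Holmgren--John uniqueness theorem; the latter becomes available precisely because of the $C^\omega$ assumption on the coefficients of $\LL$, and it plays, for vanishing sets, the r\^ole that the Hopf Lemma played for maximum sets.

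By hypoellipticity (HY), the $\LL$-harmonic function $u$ lies in $C^\infty(\Omega,\R)$. I would introduce $Z\subseteq\Omega$, the interior of the zero set $\{u=0\}$, and set $F:=\Omega\setminus Z$, which is relatively closed in $\Omega$; by hypothesis $Z\neq\varnothing$, so the claim $u\equiv 0$ on $\Omega$ amounts to $F=\varnothing$. Arguing by contradiction, assume $F\neq\varnothing$: since $\Omega$ is connected and $F$ is a proper, relatively closed subset, the remark following \eqref{estnorm} produces a pair $(y,\nu)$ with $\nu\bot\,F$ at $y$. Then $B(y+\nu,|\nu|)\subset Z$, so by continuity $u$ vanishes on the whole closed ball $\overline{B(y+\nu,|\nu|)}$.

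Here is the key step. The sphere $\Sigma:=\de B(y+\nu,|\nu|)$ is a real-analytic hypersurface through $y$ with conormal direction $\nu$, and $u$ vanishes on the inside of $\Sigma$ in a neighborhood of $y$. If $\Sigma$ were non-characteristic for $\LL$ at $y$, that is $\lan A(y)\nu,\nu\ran\neq 0$, the Holmgren--John uniqueness theorem (applicable since the coefficients of $\LL$ and the hypersurface $\Sigma$ are both real-analytic) would force $u\equiv 0$ in a full neighborhood of $y$, contradicting $y\in F$. Hence $\lan A(y)\nu,\nu\ran=0$, and the assumption $A(y)\geq 0$ yields $A(y)\nu=0$; in view of the definition \eqref{CampiXX} of $X_1,\dots,X_N$ as the rows of $A$, this is exactly $\lan X_i(y),\nu\ran=0$ for every $i=1,\ldots,N$, and the same tangentiality for $X_0$ is inherited from \eqref{X0lincopmbxi}. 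Consequently every $X\in\mathcal{V}:=\Span_\R\{X_0,X_1,\ldots,X_N\}$ satisfies the tangentiality condition \eqref{daprovSMP1primaa} at $y$.

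The conclusion is now word-for-word the final portion of the proof of Theorem~\ref{th:SMP}: Bony's invariance criterion \cite[Th\'eor\`eme~2.1]{Bony} shows that $F$ is $X$-invariant for every $X\in\mathcal{V}$, and hence so is $Z$; Amano's controllability theorem, available under (NTD) and (HY), joins any point of $\Omega$ to a point of $Z$ by a continuous path piecewise tangent to vector fields in $\mathcal{V}$, and invariance forces the whole path to lie in $Z$, so $Z=\Omega$, contradicting $F\neq\varnothing$. The step I expect to be the main obstacle is the correct invocation of Holmgren--John for the \emph{degenerate} operator $\LL$: one must verify that mere non-characteristicity of the analytic sphere at $y$, a condition bearing only on the principal symbol $\lan A(y)\nu,\nu\ran$, suffices despite $A$ being only positive semi-definite (and vanishing arbitrarily elsewhere). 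This is classical and requires no ellipticity, Holmgren--John being a purely local uniqueness statement that hinges only on analyticity of the coefficients and of the hypersurface.
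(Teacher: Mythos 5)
Your proposal is correct and follows essentially the same route as the paper's own proof: your set $F=\Omega\setminus Z$ is exactly the support of $u$ used there, the non-characteristic case is excluded via Holmgren exactly as in the paper, and the characteristic case is handled by the same linear-algebra observation plus the Bony tangentiality/invariance criterion and Amano's controllability. The only point worth making explicit is that your pair $(y,\nu)$ is arbitrary, so the tangentiality condition \eqref{daprovSMP1primaa} holds at \emph{every} externally orthogonal pair, which is what Bony's invariance theorem actually requires.
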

\begin{proof}
 Let $u\in \mathcal{H}_\LL(\Omega)$ be vanishing on the open set $U\subseteq \Omega$, $U\neq\varnothing$.
 Let $F\subseteq \Omega$ be the support of $u$.
 We argue by contradiction, by assuming that $F\neq \varnothing$.
 Let us fix any $y\in \de F\cap \Omega$
 and any $\nu\in \RN\setminus\{0\}$ such that $\nu \bot F$ at $y$ (see the notion of exterior
 orthogonality at the beginning of Section \ref{sec:SMP}; the assumption $F\neq \varnothing$
 ensures the existence of such a couple $(y,\nu)$). We consider the Euclidean open ball
 $B:=B(y+\nu,|\nu|)$ which is completely contained in $\Omega\setminus F$, so that $u\equiv 0$ on $B$.
 We observe that $B$
 is the sub-level set $\{f(x)<0\}$, where
 $$f(x)=|x-(y+\nu)|^2-|\nu|^2. $$
 There are only two cases:\medskip

 (a) The boundary of $B$ is \emph{non-characteristic} for $\LL$ at $y$, that is,
 $\lan A(y)\,\nabla f(y),\nabla f(y)\ran \neq 0$.
 Due to the $C^\omega$ assumption we are allowed to use the classical Holmgren's Theorem
 (see e.g., \cite[Theorem 8.6.5]{HormanderLibro}),
 ensuring that
 $u$ vanishes in a neighborhood of $y$, so that $y\in\Omega\setminus F$.
 Since $F$ is relatively closed in $\Omega$, this is in contradiction with $y\in \de F\cap \Omega$.
 Hence it is true that:\medskip

 (b) The boundary of $B$ is \emph{characteristic} for $\LL$ at $y$, that is,
 $\lan A(y)\,\nabla f(y),\nabla f(y)\ran = 0$.
 Since $\nabla f(x)=2\,(x-y-\nu)$, this condition boils down to
 $\lan A(y)\nu,\nu\ran=0$.
 Let $X_0,X_1,\ldots,X_N$ be the vector fields introduced
 in \eqref{CampiXX}.
 The same Linear Algebra argument
 leading to \eqref{daprovSMP1} shows that
 $\lan A(y)\nu,\nu\ran=0$ implies
 $$\lan X_i(y),\nu\ran=0\quad \text{for every $i=1,\ldots,N$.} $$
 Identity \eqref{X0lincopmbxi} guarantees that the same holds for $i=0$ as well.
 Therefore, one has  $\lan X(y),\nu\ran=0$ for every $X\in \mathcal{V}:=\textrm{span}\{X_0,X_1,\ldots,X_N\}$.
 Arguing as in Section \ref{sec:SMP}, by means of the result by Bony \cite[Théorème 2.1]{Bony},
 this geometric condition (holding true for arbitrary $\nu \bot F$ at $y$) implies that the closed set $F$ is $X$-invariant
 for any $X\in \mathcal{V}$ (that is $F$ contains the trajectories of the integral curves
 of $X$ touching $F$).

 On the other hand, the hypoellipticity assumption (HY) on $\LL$ ensures
 (due to the recalled result by Amano \cite[Theorem 2]{Amano}) that
 any pair of points of the connected open set $\Omega$ can be joined by a continuous curve which
 is piecewise an integral curve of some vector fields $X$ in  $\mathcal{V}$. Gathering together all the mentioned
 results, the fact that $F\neq \varnothing$ implies that any point of
 $\Omega$ belongs to $F$, contradicting the assumption that $U\subseteq \Omega\setminus F$.
\end{proof}

 \section{The Green function and the Green kernel for $\LL-\e$}\label{sec:Green}
 The aim of this section is to prove
 Theorem \ref{th.greeniani}. In the first part of the proof
 (Steps I--III) we follow the classical scheme by Bony (see \cite[Théorème 6.1]{Bony}),
 hence we skip many details; it is instead in Step IV that a slight difference is presented,
 in that we exploit the measure $\d\nu(x)=V(x)\,\d x$ in order to obtain the symmetry
 property of the Green kernel even when our operator $\LL$ is not (classically) self-adjoint.
 The problem of the behavior of the Green kernel along the diagonal is more subtle, as it is
 shown by Fabes, Jerison and Kenig in \cite{FabesJerisonKenig} who proved that, for divergence-form
 operators as in \eqref{mainLL} (when $V\equiv 1$ and, roughly put, when the degeneracy of $A(x)$
 is controlled by a suitable weight) the limit of the 
 Green kernel along the diagonal need not be infinite; we plan to investigate this behavior in a future study,
 since our assumption (NTD) prevents the existence of any vanishing Muckenhoupt-type weight.\medskip

 Throughout this section, we fix an operator $\LL$ of the form
 \eqref{mainLL}, with $C^\infty$ coefficients $V>0$ and $(a_{i,j})\geq 0$,
 and we assume that $\LL$ satisfies (NTD).
 Moreover, we also fix $\e\geq 0$ (note that the case $\e=0$ is allowed)
 and we set $\LL_\e:=\LL-\e$; we assume that $\LL_\e$ is hypoelliptic on every open
 subset of $\RN$. Finally, $\Omega$ is a fixed open set as
 in Lemma \ref{th.localDiri}, such that the Dirichlet problem
 \eqref{DIRIEQ} is (uniquely) solvable.

 From Lemma \ref{th.localDiri}, we know that there exists a monotone operator $G_\e$
 (which we called the Green operator related to $\LL_\e$ and $\Omega$); since $\e\geq0$ is fixed, in all this section 
 we drop the subscript $\e$ in $G_\e,k_\e,\lambda_{x,\e}$ and we simply write $G,k,\lambda_x$. Hence we are given the monotone operator
\begin{equation*}
    G: C(\overline{\Omega},\R)\longrightarrow C(\overline{\Omega},\R)
\end{equation*}
 mapping $f\in C(\overline{\Omega},\R)$ into the unique function $G(f)\in
 C(\overline{\Omega},\R)$
  satisfying
 \begin{gather}\label{DIRIEQGreenBIS}
    \left\{
      \begin{array}{ll}
        \LL_\e (G(f))=-f & \hbox{on $\Omega$\quad  (in the weak sense of distributions),} \\
        G(f)=0 & \hbox{on $\de\Omega$\quad (point-wise).}
      \end{array}
    \right.
 \end{gather}
 We also know that the (Riesz) representation
\begin{gather}\label{DIRIEQkernelBIS}
  G(f)(x)=\int_{\overline{\Omega}} f(y)\,\d\lambda_x(y)\quad \text{for
  every $f \in C(\overline{\Omega},\R)$ and every $x\in\overline{\Omega}$}
 \end{gather}
 holds true, with a unique Radon measure
 $\lambda_x$ defined on $\overline{\Omega}$ (which we called
 the Green measure related to $\LL_\e$, $\Omega$ and $x$).

 Finally, we set $\d \nu(x):=V(x)\,\d x$
 and we observe that (as in \eqref{autoagg})
  \begin{equation}\label{autoaggBIS}
    \int \varphi\,\LL_\e\psi\,\d\nu=\int \psi\,\LL_\e\varphi\,\d\nu,\quad\text{
    for every $\varphi,\psi\in C_0^\infty(\RN,\R)$.}
  \end{equation}

 \textsc{Step I.}
 We fix $x\in \Omega$.
 We begin by proving that $\lambda_x$
 is absolutely continuous with respect to the Lebesgue measure on $\overline{\Omega}$.
 To this end, let $\varphi\in C_0^\infty(\Omega,\R)$;
 by \eqref{DIRIEQGreenBIS} it is clear that $G(\LL_\e\varphi)=-\varphi$, so that (see
 \eqref{DIRIEQkernelBIS})
\begin{gather*}
  -\varphi(x)=\int_{\overline{\Omega}} \LL_\e\varphi(y)\,\d\lambda_x(y),\quad \text{for
  every $\varphi \in C_0^\infty(\overline{\Omega},\R)$.}
 \end{gather*}
 If we consider $\lambda_x$ as a distribution on $\Omega$ in the standard way, this identity
 boils down to
\begin{equation}\label{DIRIIII}
 (\LL_\e)^* \lambda_x=-\textrm{Dir}_x\quad \text{in $\mathcal{D}'(\Omega)$},
\end{equation}
 where $\textrm{Dir}_x$ denotes the Dirac mass at $x$, and $(\LL_\e)^*$
 is the classical adjoint operator of $\LL_\e$. It is noteworthy to observe
 that, in general, $(\LL_\e)^*$ is neither equal to $\LL_\e$ nor of the form $\widetilde{\LL}-\e$
 for any $\widetilde{\LL}$ a divergence operator as in \eqref{mainLL}.

 However, the following crucial property of $(\LL_\e)^*$ is fulfilled:
\begin{remark}\label{rem.hyopoLadj}
 \emph{The operator $(\LL_\e)^*$ is hypoelliptic on every open subset of $\RN$.}\medskip

 Indeed, let $U\subseteq W$ be open sets and let $u\in \mathcal{D}'(W)$ be
 such that $(\LL_\e)^* u=h$ in $\mathcal{D}'(U)$, where $h\in C^\infty(U,\R)$. This gives the following chain of identities
 (here $\psi\in C_0^\infty(U,\R)$ is arbitrary)
\begin{align*}
    \int h\,\psi&= \lan u,\LL_\e\psi\ran= \lan u,\LL\psi-\e\psi\ran
  \stackrel{\eqref{autoaggBISSS}}{=}
 \Big\lan u,\frac{\LL^*(V\psi)}{V}-\e\psi\Big\ran\\
& =
 \Big\lan \frac{u}{V},\LL^*(V\psi)-\e\psi\,V\Big\ran
=\Big\lan \frac{u}{V},(\LL_\e)^*(V\psi)\Big\ran.
\end{align*}
 If we write
 $\int h\,\psi=\int \frac{h}{V}\,(\psi\,V)$, and if we observe that
 $C_0^\infty(U,\R)=\{\psi\,V\,:\,\psi\in C_0^\infty(U,\R)\}$,
 the above computation shows that $\LL_\e(u/V)=h/V$ in $\mathcal{D}'(U)$.
 The hypoellipticity of $\LL_\e$ now gives $u/V\in C^\infty(U,R)$ whence
 $u\in C^\infty(U,R)$, as $V$ is smooth and positive.\qed
\end{remark}
  Identity \eqref{DIRIIII} gives in particular
 $(\LL_\e)^* \lambda_x= 0$ in $\mathcal{D}'(\Omega\setminus\{x\})$;
 thanks to Remark \ref{rem.hyopoLadj},
 this ensures the existence of $g_x\in C^\infty(\Omega\setminus\{x\},\R)$
 such that the distribution $\lambda_x$ restricted to
 $\Omega\setminus\{x\}$ is the function-type distribution associated
 with the function $g_x$; equivalently
\begin{equation}\label{cerntarel16volte}
    \int \varphi(y)\,\d\lambda_x(y) = \int \varphi(y)\,g_x(y)\,\d y,\quad
\text{for every $\varphi\in C_0^\infty (\Omega\setminus\{x\},\R)$.}
\end{equation}
 Clearly $g_x\geq 0$ on $\Omega\setminus\{x\}$
 and $(\LL_\e)^* g_x=0$ in $\Omega\setminus\{x\}$.
 This temporarily proves that $\lambda_x$ coincides with
 $g_x(y)\,\d y$ on $\Omega\setminus\{x\}$. We claim that this is also true
 throughout $\Omega$. This will follow if we show that $C:=\lambda_x(\{x\})=0$.
 Clearly, by the definition of $C$,  on $\Omega$ we have
 $$\lambda_x=C\,\textrm{Dir}_x+(\lambda_x)|_{\Omega\setminus\{x\}}=
 C\,\textrm{Dir}_x+g_x(y)\,\d y.$$
 Treating this as an identity between distributions on $\Omega$,
 we apply the operator $(\LL_\e)^*$ to get
 $$C\,(\LL_\e)^*\textrm{Dir}_x= -\textrm{Dir}_x-
 (\LL_\e)^*(g_x(y)\,\d y).$$
 Here we used \eqref{DIRIIII}.
 We now proceed as follows:
\begin{enumerate}[-]
  \item we multiply both sides by a $C^\infty$ function $\chi$
  compactly supported in $\Omega$ and $\chi\equiv 1$ near $x$;

  \item  we compute the Fourier transform of the tempered distributions
 obtained as above;

  \item on the left-hand side we obtain a function-type distribution
 associated with function
 $$y\mapsto C\,e^{-i\lan x,y\ran}\Big(-\sum_{i,j} a_{i,j}(x)\,y_iy_j
 +\{\text{polynomial in $y$ of degree $\leq 1$}\}\Big),$$
 where $(a_{i,j})$ is the principal matrix of $\LL$;

 \item on the right-hand side
 we obtain a function-type distribution
 associated with a function which is the sum of $y\mapsto -e^{-i\lan x,y\ran}$
 with a function of the form
 $$y \mapsto -\sum_{i,j} \alpha_{i,j}(x,y)\,y_iy_j
 +\{\text{polynomial in $y$ of degree $\leq 1$}\},$$
 where
 $$\alpha_{i,j}(x,y)=-\int g_x(\xi)\,\chi(\xi)\,a_{i,j}(\xi)\,
 e^{-i\lan \xi,y\ran}\,\d \xi.$$
 By the Riemann-Lebesgue Theorem one has $\alpha_{i,j}(x,y)\longto 0$ as $|y|\to \infty$.
 This implies that $C=0$, since at least one of the entries of
 $(a_{i,j}(x))$ is non-vanishing, due to the (NTD) hypothesis on $\LL$.
\end{enumerate}
We have therefore proved that,
 for any $x\in\Omega$,
\begin{equation}\label{suomegamiusss}
 \text{$\d\lambda_x(y)=g_x(y)\,\d y$ on $\Omega$.}
\end{equation}
 Since $\lambda_x$ is a finite measure (recalling that $\overline{\Omega}$ is compact),
 from \eqref{suomegamiusss} we get $g_x\in L^1(\Omega)$ for every
 $x\in \Omega$. \medskip

 \textsc{Step II.}
 We next show that $\lambda_x(\de \Omega)=0$ for any $x\in\overline{\Omega}$.
 For small $\delta>0$, we let $D_\delta$ denote the closed $\delta$-neighborhood
 of $\de\Omega$ of the points in $\RN$ having distance from $\de\Omega$
 less than or equal to $\delta$; we then choose a function $F\in C(\RN,[0,1])$ which is identically
 $1$ on $\de\Omega$ and is supported in the interior of $D_\delta$.
 We denote by $f$ the restriction of $F$ to $\overline{\Omega}$.
 From \eqref{DIRIEQkernelBIS} we have
\begin{gather}\label{DIRIEQkernelBISiii}
  0\leq  G(f)(x)=\int_{\overline{\Omega}} f(y)\,\d\lambda_x(y)
 \leq \int_{\overline{\Omega}} \d\lambda_x(y)=G(1)(x),
\quad \text{for every $x\in \overline{\Omega}$.}
 \end{gather}
  For any $x\in \overline{\Omega}$ we have
\begin{align*}
 \lambda_x(\de\Omega)&=\int_{\de\Omega}\d\lambda_x(y)=
 \int_{\de\Omega}f(y)\,\d\lambda_x(y)\leq \int_{\overline{\Omega}} f(y)\,\d\lambda_x(y)=
  G(f)(x)\\
 &\leq \sup_{\overline{\Omega}} G(f)
 =\max\bigg\{\sup_{\overline{\Omega}\cap {D_\delta}} G(f),
 \sup_{\overline{\Omega}\setminus D_\delta} G(f)\bigg\}
 =:\max\{\textrm{I},\textrm{II}\}.
\end{align*}
 We claim that $\textrm{I}$ and $\textrm{II}$ in the above right-hand side are
 bounded from above by $\sup_{\overline{\Omega}\cap D_\delta} G(1)$. This is true of $\textrm{I}$, due
 to \eqref{DIRIEQkernelBISiii}; as for $\textrm{II}$ we invoke
 the last assertion in Remark \ref{PMFancheconc2} applied to:
\begin{enumerate}[-]
  \item the hypoelliptic operator $\LL_\e=\LL-\e$,
  \item the bounded open set $\Omega_1:=\overline{\Omega}\setminus D_\delta$,
  \item the nonnegative function $G(f)$, which satisfies $\LL_\e G(f)=-f=0$ on $\Omega_1$ both weakly
  and strongly due to the hypoellipticity of $\LL_\e$.
\end{enumerate}
 The mentioned Remark \ref{PMFancheconc2} then ensures that the values of $G(f)$ on
 $\overline{\Omega}\setminus D_\delta$ are bounded from above by the values
 of $G(f)$ on the boundary of this set, so that $\textrm{II}\leq \textrm{I}$.
 Summing up,
\begin{align*}
 \lambda_x(\de\Omega)&\leq
 \max\{\textrm{I},\textrm{II}\}\leq
 \sup_{\overline{\Omega}\cap D_\delta} G(1).
\end{align*}
 As $\delta$ goes to $0$, the right-hand side tends to
  $\sup_{\de \Omega} G(1)=0$ by \eqref{DIRIEQGreenBIS}. This gives the desired
  $\lambda_x(\de\Omega)=0$, for any $x\in \overline{\Omega}$.
 By collecting together \eqref{suomegamiusss} and $\lambda_x(\de\Omega)=0$, we infer that (for
  every $f \in C(\overline{\Omega},\R)$ and $x\in {\Omega}$)
\begin{gather*}
  G(f)(x)\stackrel{\eqref{DIRIEQkernelBIS}}{=}
  \int_{\overline{\Omega}} f(y)\,\d\lambda_x(y)
 =
 \int_{\Omega} f(y)\,\d\lambda_x(y)
 \stackrel{\eqref{suomegamiusss}}{=}
  \int_{\Omega} f(y)\,g_x(y)\,\d y.
 \end{gather*}
 This proves the identity
\begin{gather}\label{DIRIEQkernelBISTERQ}
  G(f)(x)=\int_{\Omega} f(y)\,g_x(y)\,\d y,\quad \text{for
  every $f \in C(\overline{\Omega},\R)$ and every $x\in  \Omega$.}
\end{gather}
 If $\varphi\in C_0^\infty(\Omega,\R)$, since we know that $G(\LL_\e \varphi)=-\varphi$, we get
\begin{gather}\label{DIRIEQkernelBISTERQuater}
  -\varphi(x)=\int_{\Omega} \LL_\e\varphi(y)\,g_x(y)\,\d y,\quad \text{for
  every $x\in  \Omega$.}
\end{gather}
 This is equivalent to
\begin{gather}\label{DIRIEQkernelBISTERQuater55}
 (\LL_\e)^* g_x=-\text{Dir}_x\quad \text{for every $x\in\Omega$.}
\end{gather}

 \textsc{Step III.}
  If $g_x$ is as in Step I,
  we are ready to set
\begin{equation*}
  g:\Omega\times\Omega \longrightarrow [0,\infty],\qquad
 g(x,y):=\left\{
           \begin{array}{ll}
             g_x(y) & \hbox{if $x\neq y$} \\
             \infty & \hbox{if $x=y$.}
           \end{array}
         \right.
\end{equation*}
 Hence the representation
 \eqref{DIRIEQkernelBISTERQ}
 becomes
\begin{gather}\label{quasimoltk}
  G(f)(x)=\int_{\Omega} f(y)\,g(x,y)\,\d y,\quad \text{for
  every $f \in C(\overline{\Omega},\R)$ and every $x\in  \Omega$.}
\end{gather}
 We aim to prove that $g$ is smooth outside the diagonal of $\Omega\times\Omega$.
\begin{remark}\label{rem.topolocinfyt}
 Let $O$ be any open subset of $\RN$.
 \emph{The hypoellipticity of a general PDO $L$ as in \eqref{Lgenerale} ensures the equality of the topologies
 on $\H_{L}(O)$ inherited by the Fréchet spaces $C^\infty(O)$
 and $L^1_{\mathrm{loc}}(O)$.}\medskip

 Indeed, let $\mathcal{X}$ and $\mathcal{Y}$ denote respectively the topological space
 $\H_{L}(O)$ with the topologies inherited by $C^\infty(O)$
 and $L^1_{\mathrm{loc}}(O)$.
 Then  $\mathcal{X}$ and $\mathcal{Y}$ are Fréchet spaces, since, if a sequence $u_n\in \H_{L}(O)$
 converges to $u$ uniformly on the compact sets of $\Omega$ or, more generally
 in $L^1_{\textrm{loc}}$,
 $$0=\int u_n\,L^*\varphi \xrightarrow{n\to\infty} \int u\, L^*\varphi,\qquad
 \forall\,\,\varphi \in C_0^\infty(O,\R).$$
 Now, the identity map $\iota:\mathcal{X}\to \mathcal{Y}$ is
 trivially linear, bijective and continuous, whence, by
 the Open Mapping Theorem, $\iota$ is a homeomorphism, whence the mentioned topologies coincide.\qed
\end{remark}
 We next resume our main proof. The set $\{g_x\}_{x\in\Omega}$ is
 bounded in $L^1(\Omega)$, since
  $$0\leq \int_\Omega g_x(y)\,\d y= G(1)(x)\leq \max_{\overline{\Omega}} G(1).$$
 A fortiori, the set  $\{g_x\}_{x\in\Omega}$ is also bounded
 in the topological vector space $L^1_{\mathrm{loc}}(\Omega)$.
 We next fix two disjoint open sets $U,W$
 with closures contained in $\Omega$.
 The family of the restrictions
 $$\Big\{(g_x)\big|_U\Big\}_{x\in W} $$
 is contained in the space of the $(\LL_\e)^*$-harmonic functions
 on $U$.
 By Remark \ref{rem.topolocinfyt},
 the set $\mathcal{G}$ is also bounded in the
 topological vector space 
 $$\mathcal{H}_{\displaystyle (\LL_\e)^*}(U),\quad \text{endowed with the $C^\infty$-topology.}$$
 This means that, for every
 compact set $K\subset U$ and for every $m\in \mathbb{N}$, there exists
 a constant $C(K,m)>0$ such that
\begin{equation}\label{staellatoplo}
    \sup_{|\alpha|\leq m}\,\,\sup_{y\in K}
 \bigg| \Big(\frac{\de}{\de y}\Big)^\alpha g(x,y) \bigg| \leq C(K,m),
\quad \text{uniformly for $x\in W$.}
\end{equation}
 Following Bony \cite[Section 6]{Bony}, we introduce the operator
 $F$ transforming any distribution $T$ compactly supported in $U$ into the function
 on $W$ defined by
 $$F(T):W\longrightarrow \R,\qquad F(T)(x):=\lan  T, g_x\ran\quad (x\in W).  $$
 The definition  is well-posed since $g_x\in C^\infty (U,\R)$ (and $T$ is compactly
 supported in $U$). We claim that $F(T)\in C^\infty(W,\R)$. Once this is proved,
 by the Schwartz Kernel Theorem (see e.g., \cite[Section 11]{Dieudonne} or
 \cite[Chapter 50]{Treves}),
 we can conclude that $g(x,y)$ is smooth on $W\times U$.
 By the arbitrariness of the disjoint open sets $U,W$ this proves that
 $g(x,y)$ is smooth out of the diagonal of $\Omega\times \Omega$, as desired.

 As for the proof of the claimed $F(T)\in C^\infty(W,\R)$, we can take (say, by some appropriate convolution)
  a sequence of continuous functions $f_n$, supported in $U$, converging
 to $T$ in the weak sense of distributions; due to the compactness of the supports (of the $f_n$ and of $T$),
 $$\lim_{n\to \infty}\int_U f_n\,\varphi=\lan T,\varphi\ran,\quad \text{for every $\varphi\in C^\infty(U,\R)$.} $$
 We are hence entitled to take $\varphi=g_x$ (for any fixed $x\in W$). From
 \eqref{quasimoltk} we get
\begin{equation}\label{numerata}
 \lim_{n\to \infty} G(f_n)(x)=\lan T, g_x\ran=F(T)(x),\quad \text{for any $x\in W$.}
\end{equation}
 We now prove that $F(T)\in L^\infty(W)$; this follows from the next calculation (here
 $C>0$ and $m\in \mathbb{N}$ are constants depending on $T$ and on the compact set $\overline{U}$)
\begin{align*}
    \|F(T)\|_{L^\infty}=\sup_{x\in W} |\lan T, g_x\ran|\leq
   \sup_{x\in W} C\sum_{|\alpha|\leq m}
  \sup_{y\in \overline{U}}\bigg| \Big(\frac{\de}{\de y}\Big)^\alpha g(x,y) \bigg|
\stackrel{\eqref{staellatoplo}}{\leq } \widetilde{C}(\overline{U},m)<\infty.
\end{align*}
 We finally prove that $\LL_\e (F(T))=0$ in the weak sense of distributions on $W$;
 by the hypoellipticity of $\LL_\e$ this will yield the smoothness of $F(T)$ on $W$.
 We aim to show that,
 $$\int_W F(T)(x)\,(\LL_\e)^*\varphi(x)\,\d x=0\qquad
\text{for any $\varphi\in C_0^\infty (W)$}. $$
 Now, the left-hand side is (by \eqref{numerata})
 $$\int \lim_{n\to \infty} G(f_n)(x)\,(\LL_\e)^*\varphi(x)\,\d x.$$
 If a dominated convergence can be applied, this is equal to
  $$\lim_{n\to \infty} \int_W  G(f_n)(x)\,(\LL_\e)^*\varphi(x)\,\d x
 \eqref{DIRIEQGreenBIS}{=}
 -\lim_{n\to \infty} \int_W f_n(x)\,\varphi(x)\,\d x=0,
$$
 the last equality descending from the fact that the $f_n$
 are supported in $U$ for every $n$.
 We are then left with showing that the dominated convergence
 is fulfilled: this is a consequence of
\eqref{staellatoplo}, of the boundedness of $F(T)$  on $W$, and of
 the fact that the convergence in \eqref{numerata} is indeed uniform w.r.t.\,$x\in W$
 (a general result of distribution theory: the uniform
  convergence for sequences of distributions on bounded sets).\medskip

 \textsc{Step IV.}
 We are finally ready to introduce our kernel
\begin{equation}\label{kkkkkkkkk}
  k:\Omega\times\Omega \longrightarrow [0,\infty),\qquad
 k(x,y):=\frac{g(x,y)}{V(y)}.
\end{equation}
 Clearly, from \eqref{quasimoltk} and \eqref{autoagg} we immediately have
\begin{gather}\label{quasimoltkkkkkk}
  G(f)(x)=\int_{\Omega} f(y)\,k(x,y)\,\d \nu(y),\quad \text{for
  every $f \in C(\overline{\Omega},\R)$ and every $x\in  \Omega$.}
\end{gather}
 This gives the representation \eqref{gprororfondam} whilst
 \eqref{gprororfondam2PROPR2}
 follows from \eqref{DIRIEQkernelBISTERQuater}.

 The integrability of $k(x,\cdot)$ in $\Omega$ is a consequence of
 $g_x\in L^1(\Omega)$ (and the positivity of the continuous function $V$ on $\RN$).
 Moreover, $k$ is smooth on $\Omega\times \Omega$ deprived of the diagonal by Step III.
 Also,  the nonnegative function $k$ is integrable on $\Omega\times \Omega$ as this computation shows:
 $$0\leq \int_{\Omega\times \Omega} k(x,y)\, \d x \d y=
 \int_{\Omega} \Big(\int_{\Omega} \frac{1}{V(y)}\,k(x,y)\,\d\nu(y)\Big) \d x
 \stackrel{\eqref{quasimoltkkkkkk}}{=} \int_{\Omega} G(1/V)(x)\,\d x<\infty, $$
 the last inequality following from the continuity of $G(1/V)$ on the compact set
 $\overline{\Omega}$.

 For fixed $x\in \Omega$, the $\LL_\e$-harmonicity of the function $k(x,\cdot)$
 in $\Omega\setminus\{x\}$ is a consequence of the following computation
 $$0\stackrel{\eqref{DIRIEQkernelBISTERQuater55}}{=}(\LL_\e)^* g_x
   \stackrel{\eqref{autoaggBISSS}}{=} V\,\LL_\e\Big(\frac{g_x}{V}\Big)
 \stackrel{\eqref{kkkkkkkkk}}{=}
 V\,\LL_\e(k(x,\cdot)).$$
 The fact that $V$ is positive then gives
 $\LL_\e(k(x,\cdot))=0$ in  $\Omega\setminus\{x\}$.
 From the SMP for $\LL_\e=\LL-\e$ in Remark \ref{PMFancheconc},
 we deduce that the nonnegative function
 $k(x,\cdot)$ (which is $\LL_\e$-harmonic in $\Omega\setminus\{x\}$)
 cannot attain the (minimal) value $0$; therefore
 $k(x,\cdot)>0$  on the connected open set $\Omega\setminus\{x\}$.

 A crucial step consists in proving the symmetry
 property \eqref{gprororfondam2}.
 We take any nonnegative $\varphi\in C_0^\infty(\Omega,\R)$ and
 we set (note the reverse order of $x$ and $y$, if compared to $G(\varphi)$)
 $$\Phi(x)=\int_\Omega \varphi(y)\,k(y,x)\,\d\nu(y),\qquad x\in\Omega. $$
 We claim that $\Phi\geq G(\varphi)$ on $\Omega$; once the claim is proved,
 from \eqref{quasimoltkkkkkk}
 we infer that
  $$\int_{\Omega} \varphi(y)\,k(x,y)\,\d \nu(y) \leq \int_\Omega \varphi(y)\,k(y,x)\,\d\nu(y),\qquad x\in\Omega. $$
 The arbitrariness of $\varphi$ will then give $k(x,y)\leq k(y,x)$
 (recalling that $\d\nu=V(y)\,\d y$ with positive $V$) for every $y\in\Omega$;
 since $x,y\in\Omega$ are arbitrary, we get $k(x,y)=k(y,x)$
 on $\Omega\times \Omega$. We prove the claim. We observe that
 $\Phi$ is continuous on $\Omega$ and that $\LL_\e\Phi=-\varphi$
 in $\mathcal{D}'(\Omega)$, as the following computation shows
 ($\psi\in C_0^\infty(\Omega,\R)$ is arbitrary):
\begin{align*}
    &\int_\Omega \Phi(x)\,(\LL_\e)^*\psi(x)\,\d x=
  \int_\Omega
\varphi(y)\,\Big(
     \int_\Omega
  k(y,x)\,
  \,(\LL_\e)^*\psi(x)\,\d x
  \Big)
 \d\nu(y)\\
 &\,\,\,\,=
  \int_\Omega
\varphi(y)\,\Big(
     \int_\Omega
  k(y,x)\,
  \,\frac{(\LL_\e)^*\psi(x)}{V(x)}\,\d \nu(x)
  \Big)
 \d\nu(y)\\
 &\stackrel{\eqref{autoaggBISSS}}{=}
   \int_\Omega
\varphi(y)\,\Big(
     \int_\Omega
  k(y,x)\,
  \,\LL_\e\Big(\frac{\psi(x)}{V(x)}\Big)\,\d \nu(x)
  \Big)
 \d\nu(y)\\
 &\stackrel{\eqref{gprororfondam2PROPR2}}{=}
  -  \int_\Omega
\varphi(y)\,\frac{\psi(y)}{V(y)}\,
 \d\nu(y)=
 -  \int_\Omega
 \varphi(y)\,\psi(y)\,\d y.
\end{align*}
 From the hypoellipticity of $\LL_\e$ we get
 $\Phi\in C^\infty(\Omega,\R)$ and
 $\LL_\e\Phi =-\varphi$ point-wise.
 We now apply the WMP in Remark \ref{PMFancheconc2} to
 the operator $\LL_\e=\LL-\e$
 and to the function
 $G(\varphi)-\Phi$: this function is smooth and $\LL_\e$-harmonic on $\Omega$,
 and $G(\varphi)-\Phi\leq G(\varphi)$ on $\Omega$ (since $\Phi$ is nonnegative), so that
 $$\limsup_{x\to x_0}(G(\varphi)-\Phi)(x)\leq \limsup_{x\to x_0} G(\varphi)(x)=0\quad
\text{for every $x_0\in\de\Omega$}.$$
 Therefore $G(\varphi)-\Phi\leq 0$ on $\Omega$ as claimed.

 We finally prove \eqref{gprororfondam2PROPR23}. Due to the symmetry property of $k$,
 \eqref{gprororfondam2PROPR23} will follow if we show that, given $x_0\in \Omega$
 and  $y_0\in\de\Omega$, one has
\begin{equation}\label{invertitag}
 \lim_{n\to \infty} k(y_n,x_0)=0,
\end{equation}
 for every sequence $y_n$ in $\Omega$ converging to $y_0$.
 To this end, we fix an open set $\Omega'$ containing $x_0$ and with closure contained in $\Omega$, and
 it is non-restrictive to suppose that $y_n\notin \Omega'$ for every $n$.
 The functions
 $$k_n:\Omega'\longrightarrow \R,\qquad k_n(x):=k(y_n,x),\quad x\in\Omega'$$
 are smooth and $\LL_\e$-harmonic in $\Omega'$.
 We also have $k_n\longto 0$ in $L^1(\Omega')$, as it follows from
\begin{align*}
    0&\leq \int_{\Omega'} k_n(x)\,\d x\leq \int_\Omega k(y_n,x)\,\d x=
  \int_\Omega \frac{g(y_n,x)}{V(x)}\,\d x\\
 & \leq \sup_{\Omega} \frac{1}{V}\,
 \int_\Omega g(y_n,x)\,\d x=
 \sup_{\Omega} \frac{1}{V}\, G(1)(y_n)\xrightarrow{n\to\infty} 0.
\end{align*}
 From Remark \ref{rem.topolocinfyt} we get that
 $k_n\longto 0$ in the Fréchet space $\H_{\LL_\e}(\Omega')$
 with the $C^\infty$-topology, so that
 $k_n\longto 0$ uniformly on the compact sets of $\Omega'$ and in particular
 point-wise on $\Omega'$.\qed
\section{The Harnack inequality}\label{sec:Harnackvera}
 We begin by proving the next crucial lemma. This is the first time that, broadly
 speaking, the PDOs $\LL$ and the perturbed $\LL-\e$ clearly interact.
\begin{lemma}\label{lem.crustimabasso}
 Let $\LL$ be as in \eqref{mainLL} and let it satisfy \emph{(NTD)} and \emph{(HY)$_\e$}.
 Let $\Omega$ be an open set in $\RN$ as in the thesis
 of Lemma \ref{th.localDiri}, and let $\Omega'$ be an open set containing
 $\overline{\Omega}$.
 Finally, we denote by $k_\e$ the Green kernel related to $\LL_\e$ and to the set $\Omega$
 (as in Theorem \ref{th.greeniani}).

 Then we have the estimate
\begin{equation}\label{crustimabasso.EQ1}
    u(x)\geq \e \int_{\Omega} u(y)\,k_\e(x,y)\,\d\nu(y),\quad \forall\,x\in\Omega,
\end{equation}
 holding true for every smooth nonnegative $\LL$-harmonic function $u$ in $\Omega'$.
\end{lemma}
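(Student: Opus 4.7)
The plan is to apply the Weak Maximum Principle for $\LL_\e := \LL - \e$ (which is available via Remark \ref{PMFancheconc2}, thanks to hypothesis (HY)$_\e$) to the auxiliary function
$$
v(x) \,:=\, u(x) - \e \int_\Omega u(y)\,k_\e(x,y)\,\d\nu(y) \,=\, u(x) - \e\,G_\e\bigl(u|_{\overline{\Omega}}\bigr)(x), \qquad x \in \overline{\Omega},
$$
where $G_\e$ is the Green operator for $\LL_\e$ on $\Omega$ introduced in Definition \ref{defi.Green}. The desired estimate \eqref{crustimabasso.EQ1} is then precisely the statement $v \geq 0$ on $\Omega$.

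First I would check boundary regularity and sign. Since $u$ is smooth on the open set $\Omega' \supseteq \overline{\Omega}$, its restriction to $\overline{\Omega}$ lies in $C(\overline{\Omega})$; hence by Lemma \ref{th.localDiri}, $G_\e(u|_{\overline{\Omega}}) \in C(\overline{\Omega})$ and vanishes identically on $\de\Omega$. Consequently $v \in C(\overline{\Omega})$ with $v|_{\de\Omega} = u|_{\de\Omega} \geq 0$. Next I would compute $\LL_\e v$ on $\Omega$: on one hand $\LL u = 0$ gives $\LL_\e u = -\e u$; on the other hand, the defining property of $G_\e$ (see \eqref{DIRIEQGreen} in Definition \ref{defi.Green}) yields $\LL_\e G_\e(u) = -u$ in $\mathcal{D}'(\Omega)$. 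Combining these,
$$
\LL_\e v \,=\, -\e u - \e (-u) \,=\, 0 \qquad \text{in $\mathcal{D}'(\Omega)$,}
$$
and the hypoellipticity assumption (HY)$_\e$ promotes this to $v \in C^\infty(\Omega)$ with $\LL_\e v = 0$ in the classical sense.

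To conclude, I would apply the Weak Maximum Principle of Remark \ref{PMFancheconc2} to the function $-v \in C^\infty(\Omega) \cap C(\overline{\Omega})$, which satisfies $\LL_\e(-v) = 0 \geq 0$ on $\Omega$ and $-v = -u \leq 0$ on $\de\Omega$; this forces $-v \leq 0$ on $\Omega$, i.e.\ $v \geq 0$, which is exactly \eqref{crustimabasso.EQ1}. No step in this scheme presents a substantive obstacle: the whole argument is a direct comparison encoded in the construction of $G_\e$. The only mildly delicate point is that one uses $\LL_\e G_\e(u) = -u$ for the (merely continuous, non-compactly supported) boundary datum $u|_{\overline{\Omega}}$ rather than for a test function $\varphi \in C_0^\infty(\Omega)$, but this extension is already built into the statement of Lemma \ref{th.localDiri}, so nothing extra is required.
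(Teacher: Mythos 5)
Your proposal is correct and follows essentially the same route as the paper: identify the integral with $G_\e(u)$ via Lemma \ref{th.localDiri} and the representation \eqref{gprororfondam}, observe that $\LL_\e$ annihilates the difference $u-\e\,G_\e(u)$ (using $\LL u=0$ and $\LL_\e G_\e(u)=-u$), and conclude by the Weak Maximum Principle of Remark \ref{PMFancheconc2} together with $u\geq 0$ on $\de\Omega$. The paper phrases the same comparison in terms of $\e\,v-u$ with $v=G_\e(u)$, so the two arguments coincide up to a sign convention.
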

\begin{proof}
 We consider the function $v(x)=\int_{\Omega} u(y)\,k_\e(x,y)\,\d\nu(y)$ on
 $\Omega$. From \eqref{gprororfondam} (and the definition of Green operator)
 we know that $v=G_\e(u)$, where $G_\e$ is the Green operator related to $\LL_\e$
 (and to the open set $\Omega$); moreover, since $u$ is smooth (by assumption) on $\overline{\Omega}$,
 we know from Lemma \ref{th.localDiri} (and the hypoellipticity of $\LL_\e$)
 that $v\in C^\infty(\Omega)\cap C(\overline{\Omega})$ is the solution of
 \begin{gather}\label{DIRIEQHARNACK}
    \left\{
      \begin{array}{ll}
        \LL_\e v=-u & \hbox{on $\Omega$}, \\
        v=0 & \hbox{on $\de\Omega$.}
      \end{array}
    \right.
 \end{gather}
 This gives $\LL_\e(\e\,v-u)=-\e\,u-(\LL-\e)u=-\e\,u+\e\,u=0$ on $\Omega$;
 moreover, on $\de\Omega$, $\e\,v-u=-u\leq 0$, by the nonnegativity of $u$.
 By the WMP in Remark \ref{PMFancheconc2}, we get $\e\,v-u\leq 0$
 on $\Omega$ which is equivalent to
 \eqref{crustimabasso.EQ1}.
\end{proof}
 We are ready for the proof of the Weak Harnack Inequality (for higher order
 derivatives).\medskip
\begin{proof}[Proof (of the Weak Harnack Inequality for derivatives, Theorem \ref{lem.crustimabassoTHEO})]
 We distinguish two ca\-ses: $y_0\notin K$ and $y_0\in K$.
 The second case can be reduced to the former. Indeed,
 let us assume we have already proved the theorem
 in the former case, and let $y_0\in K$.
 If we take any $y_0'\in O\setminus K$, and we
 consider the inequality
 $$  u(y_0') \leq C'\,u(y_0),$$
 resulting from
 \eqref{HarnackdeboleEQ1} by considering  $m=0$ and the compact set $\{y_0'\}$, we get
\begin{equation*}
 \sum_{|\alpha|\leq m}
 \sup_{x\in K} \Big| \frac{\de^\alpha u(x)}{\de x^\alpha}\Big|
 \stackrel{\eqref{HarnackdeboleEQ1}}{\leq} C\,u(y_0')
 \leq C\,C'\,u(y_0).
\end{equation*}
 We are therefore entitled to assume that $y_0\notin K$.
 By the aid of a classical argument (with a chain
 of suitable small open sets $\{\Omega_n\}_{n=1}^p$
 covering a connected compact set
 containing $K\cup\{y_0\}$), it is not restrictive to assume that $K\cup\{y_0\}\subset \Omega\subset
 \overline{\Omega}\subset O$, where $\Omega$ is
 one of the basis open sets constructed in Lemma
 \ref{th.localDiri}.

 Let $x_0\in K$ be arbitrarily fixed.
 The function $k_\e(x_0,\cdot)$  (the Green kernel related to $\LL_\e$ and $\Omega$)
 is \emph{strictly positive} in $\Omega\setminus\{x_0\}$ (this is a consequence
 of the SMP applied to the $\LL_\e$-harmonic function $k_\e(x_0,\cdot)$; see
 Theorem \ref{th.greeniani}). In particular,  since $y_0\notin K$, we infer that
 $k_\e(x_0,y_0)>0$. Hence, there exist a neighborhood $W$ of $x_0$
 (contained in $\Omega$)
 and a constant $\mathbf{c}=\mathbf{c}(\e,y_0,x_0)>0$ such that
\begin{equation}\label{HarnackdeboleEQ2}
 \inf_{z\in W} k_\e(z,y_0)\geq \mathbf{c}>0.
\end{equation}
 Our assumptions allow us to apply
 Lemma \ref{lem.crustimabasso}: hence, for every nonnegative
 $u\in \mathcal{H}_\LL(O)$, we have the following chain of inequalities
\begin{align*}
   u(y_0) &\stackrel{\eqref{crustimabasso.EQ1}}{\geq}
  \e \int_{\Omega}    u(z)\,k_\e(y_0,z)\,\d\nu(z)
 \geq
 \e \int_{W} u(z)\,k_\e(y_0,z)\,\d\nu(z)\\
 &\stackrel{\eqref{gprororfondam2}}{=}
 \e \int_{W} u(z)\,k_\e(z,y_0)\,\d\nu(z)
 \stackrel{\eqref{HarnackdeboleEQ2}}{\geq}
 \e\,\mathbf{c} \int_{W} u(z)\,\d\nu(z)\geq
 \e\,\mathbf{c}\,\inf_W V\, \int_{W} u(z)\,\d z.
\end{align*}
 Summing up, for every $x_0\in K$ there exist
 a neighborhood $W$ of $x_0$ and
 a constant $\mathbf{c}_1>0$ (also depending on $x_0$
 but independent of $u$) such that
\begin{equation}\label{HarnackdeboleEQ3}
    u(y_0) \geq
  \mathbf{c}_1\int_{W} u(z)\,\d z,
\end{equation}
 for every nonnegative $u\in \mathcal{H}_\LL(O)$.

 Next, from
 Remark \ref{rem.topolocinfyt}, we know that the
 hypothesis (HY) for $\LL$ ensures the equality of the topologies
 on $\H_{\LL}(W)$ inherited by the Fréchet spaces $C^\infty(W)$
 and $L^1_{\mathrm{loc}}(W)$.
 In particular, to any chosen open neighborhood
 $U$ of $x_0$ (with $\overline{U}\subset W$) we are given
 a positive constant $\mathbf{c}_2=\mathbf{c}_2(U,W,m)$ such that
  \begin{equation}\label{HarnackdeboleEQ4}
 \sum_{|\alpha|\leq m}
 \sup_{x\in U} \Big| \frac{\de^\alpha u(x)}{\de x^\alpha}\Big|
 \leq \mathbf{c}_2\,\int_{W} u(z)\,\d z,
\end{equation}
  for every nonnegative $u\in \mathcal{H}_\LL(O)$.
 Gathering together \eqref{HarnackdeboleEQ3}
 and \eqref{HarnackdeboleEQ4}, we infer that,
 for every $x_0\in K$ there exist
 a neighborhood $U$ of $x_0$ and
 a constant $\mathbf{c}_3>0$ (again depending on $x_0$
 but independent of $u$) such that
\begin{equation*}
    u(y_0) \geq
  \mathbf{c}_3
 \sum_{|\alpha|\leq m}
 \sup_{x\in U} \Big| \frac{\de^\alpha u(x)}{\de x^\alpha}\Big|,
\end{equation*}
 for every nonnegative $u\in \mathcal{H}_\LL(O)$.
 The compactness of $K$ allows us to derive
 \eqref{HarnackdeboleEQ1} from the latter inequality, and a covering argument.
\end{proof}

 We now present a proof of Theorem \ref{th.mokobre}, crucially based on \cite[Chapter I]{Brelot}.
\begin{proof}[Proof (of Theorem \ref{th.mokobre})]
 As anticipated in the Introduction, the proof is based in an essential way on the ideas
 by Mokobodzki-Brelot in \cite[Chapter I]{Brelot}, ensuring the equivalence
 of the Strong Harnack Inequality with a series of properties comprising
 the Weak Harnack Inequality, provided some assumptions are fulfilled.
 We furnish some details in order to
 be oriented through these equivalent properties.

 We denote by $\mathcal{H}_L$ the harmonic sheaf on $\RN$ defined by
 $O\mapsto \mathcal{H}_L(O)$ (here $O\subseteq\RN$ is any open set).
 Under the assumptions of (Regularity) and (Weak Harnack Inequality),
 Brelot proves that (see \cite[pp.22--24]{Brelot}),
 for any connected open set $O\subseteq\RN$, and any
 $x_0\in O$, the set
 \begin{equation}\label{fix0}
    \Phi_{x_0}:=\Big\{h\in \mathcal{H}_L(O)\,:\,h\geq 0,\quad h(x_0)=1\Big\}
 \end{equation}
 is equicontinuous at $x_0$. The proof of this fact rests on
 some results of Functional Analysis related to the family
 of the so-called harmonic measures $\{\mu^\Omega_x\}_{x\in \de\Omega}$
 associated with $L$
 (and on basic properties of the harmonic sheaf $\mathcal{H}_L$).
 Next, we show how to prove \eqref{SHIvera} starting from
 the equicontinuity of $\Phi_{x_0}$ at $x_0$. Indeed,
 let $K\subset O$, where $K$ is compact and $O$ is an open and connected subset
 of $\RN$. By possibly enlarging $K$, we can suppose that $K$ is connected
 as well. Let $u\in \mathcal{H}_L(O)$ be nonnegative.
 If $u\equiv 0$ then \eqref{SHIvera} is trivial;
 if $u$ is not identically zero then
 (from the Weak Harnack Inequality) one has
 $u>0$ on $O$.
 For every $x\in K$, the equicontinuity of $\Phi_x$ ensures the existence
 of $\delta(x)>0$ such that (with the choice $h=u/u(x)$ in \eqref{fix0})
\begin{equation}\label{fix0EQ1}
 \frac{1}{2}\,u(x)\leq u(\xi)\leq \frac{3}{2}\,u(x),\quad
 \text{for all $\xi\in B_x:=B(x,\delta(x))$.}
\end{equation}
 From the open cover $\{B_x\}_{x\in K}$ we can extract a finite
 subcover $B_{x_1},\ldots,B_{x_p}$ of $K$. It is also non-restrictive
 (since $K$ is connected)
 to assume that the elements of this subcover are chosen in such a way that
 $$B_{x_1}\cap B_{x_2}\neq \varnothing,\quad
 (B_{x_1}\cup B_{x_2})\cap B_{x_3}\neq \varnothing,\quad\ldots\quad
 (B_{x_1}\cup\cdots \cup B_{x_{p-1}})\cap B_{x_p}\neq \varnothing.   $$
 From \eqref{fix0EQ1} it follows
 \eqref{SHIvera} with $K$ replaced by $B_{x_1}$
 (with $M=3$); since $B_{x_1}$ intersects $B_{x_2}$, one can use
 again \eqref{fix0EQ1} in order to prove
 \eqref{SHIvera} with $K$ replaced by $B_{x_1}\cup B_{x_2}$
 (with $M=3^2$); by proceeding in an inductive way, one can prove
 \eqref{SHIvera} with $K$ replaced by $B_{x_1}\cup\cdots\cup B_{x_p}$ (and $M=3^p$), and this finally proves
 \eqref{SHIvera}, since $B_{x_1}\cup\cdots\cup B_{x_p}$ covers $K$.
\end{proof}
\begin{remark}\label{rem.precisiamo}
 Following Brelot \cite[pp.14--17]{Brelot},
 it being understood that axiom (Regularity) in
 Theorem \ref{th.mokobre} holds true,
 the axiom (Weak Harnack Inequality) can be replaced by any of the following
 equivalent assumptions (see also Constantinescu and Cornea \cite{ConstantinescuCornea}):
\begin{description}
  \item[(Brelot Axiom)]
  For every connected open set $O\subseteq \RN$,
  if $\mathcal{F}$ is an up-directed\footnote{$\mathcal{F}$
 is said to be up-directed if for any $u,v\in \mathcal{F}$ there exists $w\in \mathcal{F}$
 such that $\max\{u,v\}\leq w$.}
 family of $L$-harmonic functions in $O$,
 then $\sup\limits_{u\in \mathcal{F}}u$ is either $+\infty$
 or it is $L$-harmonic in $O$.

  \item[(Harnack Principle)]
   For every connected open set $O\subseteq \RN$,
  if $\{u_n\}_n$ is a non-de\-crea\-sing sequence
 of $L$-harmonic functions in $O$,
 then $\lim\limits_{n\to \infty} u_n$ is either $+\infty$
 or it is an $L$-harmonic function in $O$.
\end{description}
\end{remark}
 \noindent We are ready to derive our main result for this section: due to all our preliminary results,
 the proof is now a few lines argument.
\begin{proof}[Proof (of Harnack Inequality, Theorem \ref{lem.crustimabassoTHEOFORTE})]
 Due to Theorem \ref{th.mokobre},
 it suffices to prove that our operator $\LL$ as in the statement of Theorem \ref{lem.crustimabassoTHEOFORTE}
 satisfies the properties named (Regularity) and (Weak Harnack Inequality)
 in Theorem \ref{th.mokobre}:
 the former is a consequence of Lemma \ref{th.localDiri}
 (with $f=0$), whilst the latter
 follows from Theorem \ref{lem.crustimabassoTHEO}.
\end{proof}

 \section{Appendix: The Dirichlet problem for $\LL$}\label{sec:Dirichlet}
 The aim of this appendix is to prove Lemma \ref{th.localDiri}
 under the following more general form in Theorem \ref{th.localDiriMIGLIO}:
 our slightly more general framework (we indeed deal with general
 hypoelliptic operators which are non-totally degenerate at every point)
 compared to the one considered by Bony in \cite{Bony} (where
 H\"ormander operators are concerned) does not present much more difficulties than the one in \cite[Section 5]{Bony},
 and the proof is given for the sake of completeness only.
\begin{theorem}\label{th.localDiriMIGLIO}
 Suppose that $L$ is an operator on $\RN$ of the form
\begin{equation}\label{mainLLMIGLIO}
 L=\sum_{i,j=1}^N\alpha_{i,j}\frac{\de^2}{\de x_i\de x_j}+
  \sum_{i=1}^N \beta_i\frac{\de}{\de x_i}+\gamma,
\end{equation}
 with  $\alpha_{i,j},\beta_i,\gamma\in C^\infty (\RN,\R)$,
 with $(\alpha_{i,j})$ symmetric and
 positive semi-definite. We assume that $L$ is non-totally
 degenerate at every $x\in\RN$ and that $L$ is $C^\infty$-hypoelliptic in every open set.\medskip

 Then there exists a basis for the Euclidean topology of $\RN$ made of open sets
 $\Omega$ with the following properties:
 for every continuous function $f$ on $\overline{\Omega}$ and for every continuous
 function $\varphi$ on $\de\Omega$, there exists one and only one solution
 $u\in C(\overline{\Omega},\R)$ of the Dirichlet problem
 \begin{gather}\label{DIRIEQMIGLIO}
    \left\{
      \begin{array}{ll}
        L u=-f & \hbox{on $\Omega$  (in the weak sense of distributions),} \\
        u=\varphi & \hbox{on $\de\Omega$ (point-wise).}
      \end{array}
    \right.
 \end{gather}
 Furthermore, if $f,\varphi\geq 0$ then $u\geq 0$ as well. Finally, if $f$ belongs to $C^\infty(\Omega,\R)
 \cap C(\overline{\Omega},\R)$, then the same is true of $u$, and $u$
is a classical solution of \eqref{DIRIEQMIGLIO}.

 Finally, if the zero-order term $\gamma$ of $L$ is non-positive on $\R$, the
 above basis $\{\Omega\}$ does not depend on $\gamma$. If $\gamma<0$,
 the basis  $\{\Omega\}$ only depends on the principal matrix $(\alpha_{i,j})$
 of $L$.
\end{theorem}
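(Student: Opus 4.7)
My plan is to follow the scheme of \cite[Section 5]{Bony} almost verbatim, replacing the H\"ormander hypothesis by our (NTD) plus $L$-hypoellipticity. The proof splits into four stages: (1) building a topology basis of bounded open sets $\Omega$ each equipped with a local Hopf-type barrier at every boundary point; (2) establishing the Weak Maximum Principle for $L$ on each such $\Omega$; (3) solving the Dirichlet problem by a Perron envelope method; (4) deducing monotonicity, smoothness, and the dependence-on-$\gamma$ claims.

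For stage (1), given $x_0\in\mathbb{R}^N$, (NTD) supplies a unit vector $\nu_0$ with $\langle\alpha(x_0)\nu_0,\nu_0\rangle=c>0$, and continuity of the $\alpha_{i,j}$ extends this to a Euclidean neighborhood $U$ of $x_0$ and an open cone of directions around $\pm\nu_0$ where $\langle\alpha(y)\nu,\nu\rangle\geq c/2$ for $y\in U$. I then build, for every small $r>0$, a Lipschitz-boundary open set $\Omega\subset U$ with $x_0\in\Omega$ and $\mathrm{diam}(\Omega)<r$, designed so that at each $y\in\partial\Omega$ there exists a supporting ball $B(z,\rho)\supset\overline{\Omega}$ tangent at $y$ with $(y-z)/\rho$ in the non-degenerate cone; the classical function $w(x)=e^{-\lambda|x-z|^2}-e^{-\lambda\rho^2}$, with $\lambda$ large, is then a local barrier ($w(y)=0$, $w>0$ on $\overline{\Omega}\setminus\{y\}$, $Lw\leq -1$ near $y$), by exactly the computation used in the proof of the Hopf Lemma \ref{lem_Hopf}. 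For stage (2), I prove the WMP on $\Omega$ without any sign assumption on $\gamma$ via the standard substitution $u=e^{\mu\langle x,\nu_0\rangle}v$: the conjugated operator has zero-order term $\gamma-\mu^2\langle\alpha\nu_0,\nu_0\rangle+O(\mu)$, which is strictly negative on $\Omega$ for $\mu$ large (exploiting $\langle\alpha\nu_0,\nu_0\rangle\geq c/2$ on $U$), so the classical WMP for operators with negative zero-order term applies. Uniqueness and monotonicity in \eqref{DIRIEQMIGLIO} follow at once.

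For stage (3), the barriers (stage 1) and comparison (stage 2) together make the Perron method work as in \cite[Section 5]{Bony}: the upper and lower Perron envelopes of the homogeneous problem $Lu=0$, $u=\varphi$ on $\partial\Omega$, coincide (by the WMP) and attain $\varphi$ continuously at $\partial\Omega$ (by the barriers), producing a classical $L$-harmonic solution; for the inhomogeneous problem I construct a particular distributional solution $u_f$ of $Lu_f=-f$ on $\Omega$ by convolving $f$ against a local parametrix for $L$ and invoking hypoellipticity to regularize, then solve the homogeneous problem with boundary data $\varphi-u_f|_{\partial\Omega}$. Stage (4) is then immediate: the smoothness claim is just hypoellipticity of $L$ applied to $Lu=-f\in C^\infty(\Omega)$; monotonicity is the stage-(2) WMP; and since the stage-(1) construction uses only the principal matrix, while the stage-(2) conjugation is unnecessary when $\gamma\leq 0$, the basis depends only on $(\alpha_{i,j})$ in that case. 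The main obstacle is stage (1): the outward-normal set of a bounded smooth domain covers $S^{N-1}$, yet (NTD) only secures a \emph{proper} open cone of non-degenerate directions at each point, so one cannot enforce the supporting-ball condition through a single convex shape. The geometric adaptation of Bony's construction to our non-subelliptic setting — using non-convex or polyhedral shapes, or multiple supporting balls at corner points, so that every $y\in\partial\Omega$ admits a center direction in the non-degenerate cone — is the technical heart of the argument, although, as the authors remark, it introduces no essentially new difficulty beyond Bony's original scheme.
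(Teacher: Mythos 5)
Your overall scheme (basis with exterior-ball condition in non-degenerate directions, a WMP on the basis sets, then solvability, then regularity via hypoellipticity) is the right skeleton, but the proposal has three concrete gaps, the first two of which concern precisely the points you defer or get the signs wrong on. First, the construction of the basis is not an unresolved ``technical heart'' to be waved at with polyhedral shapes: it is Bony's explicit lens construction, which the paper reproduces in Lemma \ref{lem.regol}. One takes $\Omega(\e):=B(x_0+\e^{-1}h_0,\e^{-1}+\e^2)\cap B(x_0-\e^{-1}h_0,\e^{-1}+\e^2)$, where $h_0$ is a direction with $\lan A(x_0)h_0,h_0\ran>0$; this is a \emph{single convex} (merely non-smooth) set, every boundary point lies on one of the two huge spheres, and the corresponding exterior tangent ball has direction $\nu_\e(y)\to\pm h_0$ as $\e\to0$, so \eqref{normaleest} holds by continuity. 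Your claim that ``one cannot enforce the supporting-ball condition through a single convex shape'' is therefore false (the obstruction you cite applies to \emph{smooth} domains, whose Gauss map is onto), and since the existence of this basis is the substance of the theorem, leaving it unproved is a genuine gap. Second, your reduction to a negative zero-order term is wrong as stated: conjugating by $u=e^{\mu\lan x,\nu_0\ran}v$ produces the zero-order coefficient $\gamma+\mu^2\lan \alpha\nu_0,\nu_0\ran+\mu\lan\beta,\nu_0\ran$, which is strictly \emph{positive} for large $\mu$, not negative; similarly, the Hopf bump $e^{-\lambda|x-z|^2}-e^{-\lambda\rho^2}$ attached to a ball containing $\overline\Omega$ satisfies $Lw>0$ near $y$ for large $\lambda$ (that is the content of \eqref{lem_Hopf.EQ1ccc}), so your barrier with ``$w>0$ on $\overline\Omega\setminus\{y\}$ and $Lw\le-1$'' has incompatible signs. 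The paper instead sets $\widetilde L u:=w\,L(wu)$ with $w=1-M|x-x_0|^2$, uses $\mathrm{trace}(A(x_0))>0$ to choose $M$ so that $\widetilde L(1)<0$ on a small ball (this also explains the final dependence-on-$\gamma$ statements), and must separately check that $\widetilde L$ inherits hypoellipticity (Remark \ref{rem-ipoLP}); your route can be repaired along these lines, but not with the multiplier you wrote.

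Third, and most seriously, your existence argument does not work in this degenerate setting. The Perron method is circular here: harmonic lifting/Poisson modification requires solvability of the Dirichlet problem on a basis of regular sets, which is exactly what is being proved; and ``convolving $f$ against a local parametrix for $L$'' is not available — mere $C^\infty$-hypoellipticity of a variable-coefficient, possibly infinitely degenerate operator gives no parametrix (and convolution is meaningless for non-constant coefficients). The paper's (and Bony's) existence proof is elliptic regularization: solve the classical Dirichlet problems for $P_n=\widetilde L+\frac1n\Delta$ (Gilbarg--Trudinger, using the exterior ball condition from Lemma \ref{lem.regol} and the negative zero-order term), obtain sup-norm bounds $\|u_n\|_\infty\le\|f\|_\infty/c_0$ uniform in $n$ from the WMP, pass to a weak-$*$ limit $u$ solving $\widetilde Lu=-f$ distributionally, use hypoellipticity for interior smoothness, and use the Hopf-type bump with estimates \emph{uniform in $n$} (inequality \eqref{doppiastimadel}) to prove $u\to0$ at the boundary; general smooth $\varphi$ is handled by subtracting an extension, and continuous $f,\varphi$ by Stone--Weierstrass/Tietze approximation plus the WMP estimate (with mollification preserving the sign of $f$, which is how the monotonicity assertion survives the limit). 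Without replacing your stage (3) by an argument of this type, the existence claim — and hence the theorem — is not established.
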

 The key step is to construct a basis for the Euclidean topology of $\RN$ as follows:
\begin{lemma}\label{lem.regol}
 Let $A(x)=(a_{i,j}(x))$ be a matrix with real-valued continuous entries on $\RN$,
 which is symmetric, positive semi-definite and non-vanishing at a point $x_0\in\RN$.

 Then, there exists a basis of connected open neighborhoods $\mathcal{B}_{x_0}$ of $x_0$
 such that any $\Omega\in \mathcal{B}_{x_0}$ satisfies the following property:
  for every $y\in \de\Omega$ there exists $\nu\in \RN\setminus\{0\}$
   such that $\overline{B(y+\nu,|\nu|)}$ intersects $\overline{\Omega}$ at $y$ only, and such that
  \begin{equation}\label{normaleest}
    \lan A(y)\,\nu , \nu \ran >0.
  \end{equation}
\end{lemma}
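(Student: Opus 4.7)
The plan is to exhibit $\mathcal{B}_{x_0}$ as a one-parameter family of \emph{lens-shaped} sets, each obtained as the intersection of two Euclidean balls of equal radii placed symmetrically about $x_0$ along a direction $v_0$ that is non-characteristic for $A(x_0)$. The key geometric observation is that at every boundary point $y$ of such a lens, one of the two defining spheres supplies an externally tangent ball — giving the required exterior ball condition for free — whose center direction is forced to be arbitrarily close to $\pm v_0$ as the thickness of the lens shrinks; by continuity of $A$ this yields $\lan A(y)\nu,\nu\ran>0$.

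To select $v_0$: since $A(x_0)$ is symmetric, positive semi-definite and non-zero, it has a strictly positive eigenvalue $\lambda>0$, and I take a corresponding unit eigenvector $v_0$. By continuity of the $a_{i,j}$ there exist $\eta\in(0,1)$ and an open neighborhood $W$ of $x_0$ with
\[
\lan A(y)\nu,\nu\ran>\tfrac{\lambda}{2}|\nu|^2
\qquad\text{for every }y\in W\text{ and every }\nu\neq 0\text{ with }\bigl|\tfrac{\nu}{|\nu|}\mp v_0\bigr|<\eta.
\]
Then, for $\delta>0$, I set
\[
\Omega_\delta:=B(x_0-v_0,\,1+\delta)\cap B(x_0+v_0,\,1+\delta),
\]
which is open, convex (hence connected), and contains $x_0$. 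A short computation on the two defining inequalities gives $|x-x_0|^2\leq 2\delta+\delta^2$ for every $x\in\overline{\Omega_\delta}$, so $\{\Omega_\delta\}_{0<\delta<\delta_0}$ is indeed a basis of connected open neighborhoods of $x_0$, and I can fix $\delta_0$ small enough that $\overline{\Omega_{\delta_0}}\subset W$.

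Next I would verify the exterior ball property. Pick $y\in\de\Omega_\delta$; by construction $y$ lies in $\de B(x_0-v_0,1+\delta)$ or in $\de B(x_0+v_0,1+\delta)$ (possibly in both, at edge points). If $y\in\de B(x_0-v_0,1+\delta)$, I set $\nu:=y-x_0+v_0$, so $|\nu|=1+\delta$. The distance between the center $y+\nu=2y-x_0+v_0$ of $\overline{B(y+\nu,|\nu|)}$ and the center $x_0-v_0$ of $\overline{B(x_0-v_0,1+\delta)}$ equals $2|y-x_0+v_0|=2(1+\delta)$, which is exactly the sum of the two radii. Hence those closed balls are externally tangent at $y$; combined with $\overline{\Omega_\delta}\subseteq\overline{B(x_0-v_0,1+\delta)}$ this forces
\[
\overline{B(y+\nu,|\nu|)}\cap\overline{\Omega_\delta}=\{y\}.
\]
The symmetric choice $\nu:=y-x_0-v_0$ handles the case $y\in\de B(x_0+v_0,1+\delta)$, and either is admissible at edge points.

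To conclude, observe that with $\nu=(y-x_0)\pm v_0$ and $|\nu|=1+\delta$, the triangle inequality gives $|\nu\mp v_0|=|y-x_0|$ and $\bigl||\nu|-1\bigr|\leq|\nu\mp v_0|=|y-x_0|$, whence
\[
\Bigl|\tfrac{\nu}{|\nu|}\mp v_0\Bigr|
\leq \frac{|\nu\mp v_0|+\bigl||\nu|-1\bigr|}{|\nu|}
\leq \frac{2|y-x_0|}{1+\delta}
\leq \frac{2\sqrt{2\delta+\delta^2}}{1+\delta}\xrightarrow[\delta\to 0]{}0,
\]
uniformly in $y\in\de\Omega_\delta$. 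After possibly shrinking $\delta_0$ so that this ratio stays below $\eta$, the choice of $W$ in Step~1 gives $\lan A(y)\nu,\nu\ran>\frac{\lambda}{2}|\nu|^2>0$, and $\mathcal{B}_{x_0}:=\{\Omega_\delta:0<\delta<\delta_0\}$ is the required basis. The only mildly delicate point is this final uniform estimate on $\nu/|\nu|$, which however collapses to the two elementary inequalities above, so no substantive obstacle arises.
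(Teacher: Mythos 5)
Your proof is correct and follows essentially the same route as the paper: both construct a basis of lens-shaped neighborhoods obtained by intersecting two balls placed symmetrically about $x_0$ along a direction where the quadratic form of $A(x_0)$ is positive, use the defining sphere to produce the externally tangent ball, and conclude via the uniform convergence of $\nu/|\nu|$ to $\pm v_0$ plus continuity of $A$. The only difference is cosmetic: the paper (following Bony) sends the centers to infinity ($x_0\pm\e^{-1}h_0$, radius $\e^{-1}+\e^2$) while you keep the centers fixed at $x_0\pm v_0$ and shrink the radius to $1$, which is an equivalent parametrization of the same construction.
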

\begin{proof}
 By the assumptions on $A(x_0)$ there exists a unit vector $h_0$
 such that
  \begin{equation}\label{normaleestEQ1}
 \lan A(x_0) h_0, h_0\ran >0.
  \end{equation}
 Following the idea of Bony \cite{Bony},
 we choose the neighborhood basis $\mathcal{B}_{x_0}=\{\Omega(\e)\}$ as follows:
 $$\Omega(\e):=B(x_0+\e^{-1}\,h_0,\e^{-1}+\e^2)\cap B(x_0 - \e^{-1}\,h_0, \e^{-1}+\e^2) .$$
 It suffices to show that there exists $\overline{\e}>0$ such that
 every  $\Omega(\e)$ with $0<\e\leq \overline{\e}$
 satisfies the requirement of the lemma. Now,
 the set $\Omega(\e)$ (which is
 trivially an open neighborhood of $x_0$) shrinks to $\{x_0\}$
 as $\e$ shrinks to $0$. Moreover, every
 $y\in \de \Omega(\e)$ belongs to one at least of the
 spheres $\de B(x_0\pm\e^{-1}\,h_0,\e^{-1}+\e^2)$; accordingly,
 we choose
 $$\nu=\nu_\e(y):=\frac{y-(x_0\pm\e^{-1}\,h_0)}{\e^{-1}+\e^2}$$
 to get the geometric condition $\overline{B(y+\nu,|\nu|)}\cap \overline{\Omega(\e)}=\{y\}$.
 It obviously holds that $\nu_\e(y)$ tends to $h(x_0)$ as $\e\to 0$
 (uniformly for bounded $x_0,y,h_0$), so that
 \eqref{normaleest} follows from \eqref{normaleestEQ1}
 by continuity arguments, for any $0\leq \e\leq \overline{\e}$, with $\overline{\e}$
 conveniently small.
\end{proof}
 We proceed with the proof of
 Theorem \ref{th.localDiriMIGLIO} by constructing, for any given $x_0\in\RN$,
 a basis of neighborhoods of $x_0$ as required.
 The crucial step is to reduce $L$ to some equivalent operator $\widetilde{L}$
 with zero-order term $\widetilde{L}(1)$ which is strictly negative around $x_0$.
 We observe that this procedure is not necessary if $\gamma=L(1)$ is already known to be negative on $\RN$.
 In general, we let
  $$\widetilde{L} u:=w\,L(w\,u),\quad \text{where $w(x)=1-M\,|x-x_0|^2$},$$
 with $M\gg 1$ to be chosen.
 Let us denote by $B(x_0)$ the Euclidean
 ball of centre $x_0$ and radius $1/\sqrt M$.
 It is readily seen that the second order parts of $L$ and $\widetilde{L}$
 are equal, modulo the factor $w^2$.
 This shows that $\widetilde{L}$ is non-totally degenerate at any point of
 $B(x_0)$ and that the principal matrix of $\widetilde{L}$ is
 symmetric and positive semi-definite at any point of $B(x_0)$.
 Since
\begin{align*}
 \widetilde{L}(1)(x)&=w^2(x)\,\gamma(x)-2M\textrm{trace}(A(x))-2M\sum_{i=1}^N \beta_i(x)\,(x-x_0)_i,
\end{align*}
 if we choose $M$ so large that $M>\gamma(x_0)/(2\,\textrm{trace}(A(x_0)))$
 (we recall that $\textrm{trace}(A(x))>0$ at any $x$ since $L$ is non-totally degenerate
 at any point), then $\widetilde{L}(1)(x_0)<0$. By continuity,
 there exists $r>0$ small enough such that $B'(x_0):=B(x_0,r)\subseteq B(x_0)$
 and such that  $\widetilde{L}(1)<0$ on the closure of $B'(x_0)$.
 We explicitly remark (and this will prove the final statement of the theorem)
 that the condition $\gamma\leq 0$ allows us to take $M=1$ for all $x_0$
 and to use the bound
\begin{align*}
 \widetilde{L}(1)(x)&\leq -2\textrm{trace}(A(x))-2\sum_{i=1}^N \beta_i(x)\,(x-x_0)_i,
\end{align*}
 in order to chose $r$ independently of $\gamma$.
\begin{remark}\label{rem.WMPsullabase}
 Classical arguments, \cite{lanco_maxprinc}, show that, due to the
 strict negativity of $\widetilde{L}(1)$ on $B'(x_0)$, the operator $\widetilde{L}$
 satisfies the Weak Maximum Principle on every open subset of $B'(x_0)$, that is:
\begin{equation}\label{WMPB'}
 \left\{
   \begin{array}{ll}
     \Omega\subset B'(x_0),\,\,u\in C^2(\Omega,\R)\\
     \widetilde{L} u\geq 0\,\,\text{on $\Omega$} \\
     \limsup\limits_{x\to y} u(x)\leq 0\,\,\text{for every $y\in\de\Omega$}
   \end{array}
 \right.
 \qquad\Longrightarrow\qquad
 u\leq 0\,\,\text{on $\Omega$.}
\end{equation}
\end{remark}
 The rest of the proof consists
 in demonstrating the following statement:
\begin{description}
  \item[(S)]
 \emph{there exists a basis ${\mathcal{B}}_{x_0}$ of neighborhoods $\Omega$ of $x_0$ all contained in $B'(x_0)$
 with the properties required in
 Theorem \ref{th.localDiriMIGLIO} relative to $\widetilde{L}$ (in place of $L$)}.
\end{description}
 Once this is proved,
 given any  $\Omega\in {\mathcal{B}}_{x_0}$, any $f\in C(\overline{\Omega},\R)$ and any
 $\varphi\in C(\de\Omega,\R)$, we obtain the solution $\widetilde{u}$ of the problem
 \begin{gather}\label{problema.tildato}
    \left\{
      \begin{array}{ll}
        \widetilde{L} \widetilde{u}=-w\,f & \hbox{on $\Omega$  (in the weak sense of distributions),} \\
        \widetilde{u}=\varphi/w & \hbox{on $\de\Omega$ (point-wise);}
      \end{array}
    \right.
 \end{gather}
 then we set $u:=w\,\widetilde{u}$, and a simple verification shows that
 $u$ solves \eqref{DIRIEQMIGLIO}, so that existence is proved.
 As for uniqueness, it suffices to observe that for any fixed $\Omega\in{\mathcal{B}}_{x_0}$,
 to any solution $u$ of
 \eqref{DIRIEQMIGLIO} on $\Omega$, there corresponds a solution
 $\widetilde{u}=u/w$ of \eqref{problema.tildato} (which is unique, as it is claimed in (S)).
 Finally all the other requirements
 on $u$ in the statement of Theorem \ref{th.localDiriMIGLIO}
 are satisfied, since $w$ is positive and smooth on $\Omega\subseteq B(x_0)$.
\begin{remark}\label{rem-ipoLP}
 \emph{We remark that the operator $\widetilde{L}$ is $C^\infty$-hypoelliptic on every open subset of
 $B(x_0)$.}\medskip

 Indeed, for any open sets $V,V'$ such that $V\subseteq V'\subseteq B(x_0)$,
 a distribution $u\in \mathcal{D}'(V')$
 such that $\widetilde{L}u =f\in C^\infty(V,\R)$ satisfies
 $L(w\,u)=f/w\in C^\infty(V,\R)$; thus, by the hypoellipticity of
  $L$, we infer that $w\,u\in C^\infty(V,\R)$ so that $u\in C^\infty(V,\R)$
 (recalling that $w\neq 0$ on $B(x_0)$).
\end{remark}
 We are then left to prove statement (S).
 From now on we choose a neighborhood basis $\mathcal{B}_{x_0}$ of $x_0$ consisting of open sets
 (contained in $B'(x_0)$)
 as in Lemma \ref{lem.regol} relative to the principal matrix $\widetilde{A}$ of the operator $\widetilde{L}$
 (the matrix $\widetilde{A}(x_0)$ is symmetric, positive semi-definite and non vanishing, as already discussed).
 We will show that any $\Omega\in \mathcal{B}_{x_0}$ has the requirements in statement (S).
 For the uniqueness part, it suffices to use in a standard way
 the WMP in Remark \ref{rem.WMPsullabase} jointly with the
 hypoellipticity condition in Remark \ref{rem-ipoLP}.
 As for existence, we split the proof in several steps and, to simplify the notation, we write
 $P$ instead of $\widetilde{L}$.\medskip

 (I): \emph{$f$ smooth and $\varphi\equiv 0$}.
 We fix $\Omega$ as above, $f\in C^\infty(\Omega,\R)\cap C(\overline{\Omega},\R)$
 and $\varphi\equiv 0$. We use a standard elliptic approximation argument.
 For every $n\in \mathbb{N}$ we set
$$P_n:=P+\frac{1}{n}\sum_{j=1}^N \Big(\frac{\de}
 {\de x_j}\Big)^2.$$ We observe that:
\begin{itemize}
  \item[-] $P_n$ is uniformly elliptic on $\RN$;
  \item[-] the zero-order term $P_n(1)=P(1)\,\,(=\widetilde{L}(1))$ is (strictly) negative on $\Omega$;
  \item[-] $\Omega$ satisfies an exterior ball condition, due to Lemma \ref{lem.regol};
  \item[-] $f\in C^\infty(\Omega,\R)$.
\end{itemize}
 These conditions imply the existence (see e.g., Gilbarg and Trudinger \cite{GilbargTrudinger}) of a classical solution $u_n\in C^\infty(\Omega,\R)\cap C(\overline{\Omega},\R)$
 of the Dirichlet problem
\begin{gather*}
    \left\{
      \begin{array}{ll}
        P_n u_n=-f & \hbox{on $\Omega$} \\
        u_n=0 & \hbox{on $\de\Omega$.}
      \end{array}
    \right.
 \end{gather*}
  Let $c_0>0$ be such that $P(1)<-c_0$ on the closure of $B'(x_0)$.
  With this choice, we observe that (setting $\|f\|_\infty=\sup_{\overline{\Omega}}|f|$)
 $$
 \left\{
   \begin{array}{ll}
    P_n \Big(\pm u_n-\dfrac{\|f\|_\infty}{c_0}\Big)=\mp f-\dfrac{\|f\|_\infty}{c_0}\,P(1)\geq
 \mp f+  \dfrac{\|f\|_\infty}{c_0}\,c_0\geq 0 & \hbox{on $\Omega$} \\
    \pm u_n-\dfrac{\|f\|_\infty}{c_0}=-\dfrac{\|f\|_\infty}{c_0}\leq 0 & \hbox{on $\de\Omega$.}
   \end{array}
 \right.$$
 Arguing as in Remark \ref{rem.WMPsullabase}, the Weak Maximum Principle for $P_n$
 proves that
\begin{equation}\label{stimaunnnn}
   \|u_n\|_\infty= \sup_{x\in \overline{\Omega}} |u_n(x)|\leq  \dfrac{\|f\|_\infty}{c_0}\quad
\text{uniformly for every $n\in \mathbb{N}$}.
\end{equation}
 This provides us with a subsequence of $u_n$ (still denoted by $u_n$)
 and a function $u\in L^\infty(\Omega)$ such that $u_n$ tends to $u$
 in the weak$^*$ topology, that is
\begin{equation}\label{stimaunnnnEQ1}
 \lim_{n\to \infty} \int_\Omega u_n\,h= \int_\Omega u\,h,\quad
\text{for all $h\in L^1(\Omega)$.}
\end{equation}
 Moreover one knows that
\begin{equation}\label{stimaunnnnEQ1bis}
  \|u\|_{L^\infty(U) }\leq \limsup_{n\to \infty} \|u_n\|_{L^\infty(U) },\quad
\text{for all $U\subseteq \Omega$.}
\end{equation}
 From \eqref{stimaunnnnEQ1} it easily follows that
\begin{equation*}
 \int_\Omega u\,P^*\psi= -\int_\Omega f\,\psi,\quad
\text{for all $\psi\in C_0^\infty(\Omega,\R)$.}
\end{equation*}
  This means that $Pu=-f$ in the weak sense of distributions.
 As $P$ is hypoelliptic on every open set (Remark \ref{rem-ipoLP}),
 we infer that $u$ can be modified on a null set in such a way that
 $u\in C^\infty(\Omega,\R)$. Thus $P u=-f$ in the classical sense on $\Omega$.
 We aim to prove that $u$ can be continuously prolonged to $0$ on $\de\Omega$.
 To this end, given any $y\in \de\Omega$, in view of
 Lemma \ref{lem.regol} (and the choice of $\Omega$),
 there exists $\nu\in \RN\setminus\{0\}$
 such that $\overline{B(y+\nu,|\nu|)}$ intersects $\overline{\Omega}$ at $y$ only,
 and such that (see \eqref{normaleest})
  \begin{equation}\label{normaleestBBO}
    \lan \widetilde{A}(y)\,\nu , \nu \ran >0.
  \end{equation}
 As in the Hopf-type Lemma \ref{lem_Hopf}, we consider the function
 $$w(x) := e^{-\lambda|x - (y+\nu)|^2} - e^{-\lambda|\nu|^2},$$
 where $\lambda$ is a positive real number chosen in a moment.
 For every $n$ and for every $x$ one has
\begin{gather}\label{stimaintornidel}
\begin{split}
 P_n w(x) &=  Pw(x)+\frac{1}{n}\,e^{-\lambda|x - (y+\nu)|^2}\Big(4\lambda^2|x-(y+\nu)|^2
 -2\lambda N\Big)\\
 &\geq
 Pw(x)-2\lambda N e^{-\lambda|x - (y+\nu)|^2}.
\end{split}
\end{gather}
 If we set $P=\sum_{i,j}\widetilde{a}_{i,j}\de_{i,j}+\sum_j \widetilde{b}_j\de_j+\widetilde{c}$,
 a simple computation (similar to \eqref{lem_Hopf.EQ1ccc}) shows that
\begin{align*}
 &\Big( Pw(x)-2\lambda N e^{-\lambda|x - (y+\nu)|^2}\Big)\Big|_{x=y}\\
 &=
 e^{-\lambda|\nu|^2}\bigg(4\lambda^2
 \langle\widetilde{A}(y)\nu,\nu\rangle
 - 2\lambda\sum_{j = 1}^N\big(\widetilde{a}_{j,j}(y) - \widetilde{b}_j(y)\nu_j\big)
 -2\,\lambda\,N\bigg).
 \end{align*}
 Thanks to \eqref{normaleestBBO}, there exists $\lambda \gg 1$
 such that the above right-hand side is strictly  positive.
 Therefore, due to \eqref{stimaintornidel}
 there exist $\e>0$ and an open ball $V=B(y,\delta)$ (with $\e$ and $\delta$ \emph{independent of} $n$)
 such that
\begin{equation}\label{doppiastimadel}
 P_nw(x)\geq \e\quad \text{for every $x\in V$ and every $n\in\mathbb{N}$.}
\end{equation}
 We are willing to apply the Weak Maximum Principle for the operator $P_n$
 on the open set $\Omega\cap V$, and for the functions $M\,w\pm u_n$, where
 $M\gg 1$ is chosen as follows. First we have
 $$P_n (M\,w\pm u_n)=M\,P_n w\pm P_n u_n=M\,P_n w \mp f\geq M\,\e \mp f\geq M\,\e-\|f\|_\infty,\quad
 \text{in $\Omega\cap V$}. $$
 Consequently we first chose $M>\|f\|_\infty/\e$. Then we study the behavior of  $M\,w\pm u_n$
 on $$\de(\Omega\cap V)=[V\cap\de \Omega] \cup [\overline{\Omega}\cap \de V]=:\Gamma_1\cup \Gamma_2.$$
 Firstly, on $\Gamma_1$ we have $M\,w\pm u_n=M\,w\leq 0$ since $\Gamma_1\subseteq \RN\setminus B(y+\nu,|\nu|)$.
 Secondly, on $\Gamma_2$,
 $$M\,w\pm u_n \leq M\,\max_{\Gamma_2}w+\|u_n\|_\infty
 \stackrel{\eqref{stimaunnnn}}{\leq }
 M\,\max_{\Gamma_2}w+\dfrac{\|f\|_\infty}{c_0}.$$
 Since $\Gamma_2$ is a compact set on which $w$ is strictly negative, we have
 $\max_{\Gamma_2}w<0$ and the further choice
 $M\geq - \|f\|_\infty/(c_0\max_{\Gamma_2}w)$ yields
 $M\,w\pm u_n\leq 0$ on $\Gamma_2$. Summing up,
 $$\left\{
   \begin{array}{ll}
    P_n (M\,w\pm u_n)\geq 0 & \hbox{on $\Omega\cap V$} \\
    M\,w\pm u_n \leq 0 & \hbox{on $\de(\Omega\cap V)$.}
   \end{array}
 \right.$$
   The Weak Maximum Principle  yields $M\,w\pm u_n\leq 0$ on $\Omega\cap V$, that is
 (since $w<0$ on $\Omega$)
 $$ |u_n(x)|\leq M\,|w(x)|\quad \text {for every $x\in \Omega\cap V$ and for every $n\in \mathbb{N}$.}$$
 Since $w(y)=0$, for every $\sigma>0$ there exists an open neighborhood $W\subset V$ of $y$
 such that $\|w\|_{L^\infty(W)}<\sigma$; the above inequality then gives
 $\|u_n\|_{L^\infty(W\cap \Omega)}\leq M\,\sigma$. Jointly with
 \eqref{stimaunnnnEQ1bis} we deduce that
  $\|u\|_{L^\infty(W\cap \Omega)}\leq M\,\sigma$, so that $\lim_{\Omega\ni x\to y}u(x)=0$.
 From the arbitrariness of $y$, we obtain that $u$ prolongs to be $0$ on $\de\Omega$ with
 continuity.

 In order to complete the proof of (S), we are left to show that
 if $f\in C^\infty(\Omega,\R)\cap C(\overline{\Omega},\R)$
 is nonnegative, then the unique solution $u\in C(\overline{\Omega},\R)$ of
\begin{gather*}
    \left\{
      \begin{array}{ll}
        P u=-f & \hbox{on $\Omega$  (in the weak sense of distributions)} \\
        u=0 & \hbox{on $\de\Omega$ (point-wise)}
      \end{array}
    \right.
 \end{gather*}
 is nonnegative as well. From the hypoellipticity of $P$
 (see Remark \ref{rem-ipoLP}), we already know that
 $u\in C^\infty(\Omega,\R)$, and we can apply the WMP to $-u$
 (see Remark \ref{rem.WMPsullabase})
 to get $-u\leq 0$.\medskip

 (II): \emph{$f$ and $\varphi$ smooth}.
  We fix $\Omega$ as above, and $f$ is in $C^\infty(\Omega,\R)\cap C(\overline{\Omega},\R)$
  and $\varphi$ is the restriction to $\de\Omega$ of some function
 $\Phi$ which is smooth and defined on an open neighborhood of $\overline{\Omega}$.
 As in Step (I), we consider the unique solution $v\in C^\infty(\Omega,\R)\cap C(\overline{\Omega},\R)$ of
\begin{gather*}
    \left\{
      \begin{array}{ll}
        P v=-f-P\Phi & \hbox{on $\Omega$} \\
        v=0 & \hbox{on $\de\Omega$,}
      \end{array}
    \right.
 \end{gather*}
 and we observe that $u=v+\Phi$ is the (unique) classical solution of
\begin{gather*}
    \left\{
      \begin{array}{ll}
        P u=-f & \hbox{on $\Omega$} \\
        u=\Phi|_{\de\Omega}=\varphi & \hbox{on $\de\Omega$.}
      \end{array}
    \right.
 \end{gather*}
 If furthermore $f,\varphi\geq 0$, the nonnegativity of $u$ is a consequence of the
 WMP as in Step (I).\medskip

 (III): \emph{$f$ and $\varphi$ continuous}.
 Finally we consider
 $f\in C(\overline{\Omega},\R)$ and
 $\varphi\in C(\de\Omega,\R)$.
 By the Stone-Weierstrass Theorem, there
 exist polynomial functions $f_n,\varphi_n$
 uniformly converging to $f,\varphi$ respectively on $\overline{\Omega},\de\Omega$
as $n\to\infty$. As in Step (II), for every $n\in\mathbb{N}$ we consider the
 unique classical solution $u_n$ of
  \begin{gather*}
    \left\{
      \begin{array}{ll}
        P u_n=-f_n & \hbox{on $\Omega$} \\
        u_n=\varphi_n & \hbox{on $\de\Omega$.}
      \end{array}
    \right.
 \end{gather*}
 From the fact that
 $-c_0:=\max_{\overline{\Omega}} P(1)<0$, we
 can argue as in Step (I), obtaining the estimate
 $$\|u_n-u_m\|_{C(\overline{\Omega})}
 \leq \max\bigg\{\frac{1}{c_0}\,\|f_n-f_m\|_{C(\overline{\Omega})},
 \|\varphi_n-\varphi_m\|_{C({\de\Omega})}\bigg\}. $$
 This proves that there exists
 the uniform limit $u:=\lim_{n\to\infty} u_n$ in $C(\overline{\Omega},\R)$.
 Clearly one has: $u=\varphi$ point-wise on $\de\Omega$ and
 $Pu=-f$ in the weak sense of distributions on $\Omega$.
 From the hypoellipticity of $P$ (Remark \ref{rem-ipoLP})
 we infer that $f$ smooth implies $u$ smooth. Finally,
 suppose that $f,\varphi\geq 0$.
 By the Tietze Extension Theorem, we prolong $f$ out of $\overline{\Omega}$
 to a \emph{continuous} function $F$ on $\RN$;
 we consider a mollifying sequence $F_n\in C^\infty(\RN,\R)$
 uniformly converging to $F$ on the compact sets of $\RN$.
 Since mollification preserves the sign, the fact that
 $F|_{\overline{\Omega}}\equiv f\geq 0$ on $\overline{\Omega}$ gives that
 $F_n\geq 0$ on $\overline{\Omega}$.
 As above in this Step, we solve the problem
  \begin{gather*}
  \left\{
      \begin{array}{ll}
        P U_n=-F_n & \hbox{on $\Omega$} \\
        U_n=\varphi & \hbox{on $\de\Omega$,}
      \end{array}
    \right.
\qquad \text{with}\quad   U_n\in C^\infty(\Omega,\R)\cap C(\overline{\Omega},\R),
 \end{gather*}
 and we get that $U_n$ uniformly converges on $\overline{\Omega}$ to the unique
 continuous solution $u$ of
  \begin{gather*}
    \left\{
      \begin{array}{ll}
        P u=-f & \hbox{in $\mathcal{D}'(\Omega)$} \\
        u=\varphi & \hbox{on $\de\Omega$.}
      \end{array}
    \right.
 \end{gather*}
 From the WMP for $-U_n$ (recalling that $F_n\geq 0$ and $\varphi\geq0$), we derive
 $U_n\geq 0$ on $\overline{\Omega}$ ; this gives $u(x)=\lim_{n\to \infty} U_n(x)\geq 0$
 for all $x\in \overline{\Omega}$. This completes the proof.\qed




\begin{thebibliography}{SK}

\bibitem{AbbondanzaBonfiglioli}
 Abbondanza, B., Bonfiglioli, A.:
\emph{On the Dirichlet problem and the inverse mean value theorem
 for a class of divergence form operators},
  J. London Math. Soc., 1--26
  (2012); doi: 10.1112/jlms/jds050.


\bibitem{Amano}
 Amano, K.:
 \emph{A necessary condition for hypoellipticity of degenerate elliptic-parabolic operators},
 Tokyo J. Math. \textbf{2} (1979) 111--120.




\bibitem{BattagliaBonfiglioli}
 Battaglia, E., Bonfiglioli, A.:
 \emph{Normal families of functions for subelliptic operators
  and the theorems of Montel and Koebe}
  J. Math. Anal. Appl. \textbf{409} (2014), 1--12.




\bibitem{BellMohammed}
 Bell, D.R., Mohammed, S.-E. A.:
 \emph{An extension of H\"ormander's theorem for infinitely degenerate second-order operators},
 Duke Math. J. \textbf{78} (1995), 453--475.


 \bibitem{BonfLancJEMS}
  Bonfiglioli, A., Lanconelli, E.:
  \emph{Subharmonic functions in sub-Riemannian settings},
 J. Eur. Math. Soc. \textbf{15}  (2013), 387--441.
%
 \bibitem{BonfLancTommas}
  Bonfiglioli, A., Lanconelli, E., Tommasoli, A.:
 \emph{Convexity of average operators
 for subsolutions to subelliptic equations}
 Analysis \& PDE, \textbf{7} (2014), 345--373.


\bibitem{BLUlibro}
 Bonfiglioli, A., Lanconelli, E., Uguzzoni, F.:
 \emph{Stratified Lie Groups and Potential Theory for their sub-Laplacians},
  Springer Monographs in Mathematics, New York, NY, Springer 2007.


\bibitem{Bony}
 Bony, J.-M.:
 \emph{Principe du maximum, in{\'e}galit{\'e} de {H}arnack et unicit{\'e}
 du probl{\`e}me de {C}auchy pour les op{\'e}rateurs elliptiques
 d{\'e}g{\'e}n{\'e}r{\'e}s},
 Ann. Inst. Fourier (Grenoble),
 \textbf{19} (1969),
 277--304.
%

 \bibitem{Brelot}
 Brelot, M.:
 \emph{Axiomatique des fonctions harmoniques},
 Séminaire de Mathématiques Supérieures, \textbf{14} (Été, 1965),
 Les Presses de l'Université de Montréal, Montréal, 1969.



%



%
 \bibitem{Christ}
  Christ., M.:
  \emph{Hypoellipticity in the infinitely degenerate regime},
  in: Complex Analysis and Geometry (J.D. McNeal, ed.), Ohio State Univ.
  Math Res. Inst. Publ. \textbf{9}, Walter de Gruyter, Berlin, 2001,
  pp. 59--84.

%
\bibitem{ConstantinescuCornea}
 Constantinescu, C., Cornea, A.:
 \emph{On the axiomatic of harmonic functions. I},
 Ann. Inst. Fourier (Grenoble) \textbf{13} (1963), 373--388.

\bibitem{DeCiccoVivaldi}
 De Cicco, V.,  Vivaldi, M.A.:
 \emph{Harnack inequalities for Fuchsian type weighted elliptic equations},
 Comm. Partial Differential Equations \textbf{21} (1996), 1321--1347.


\bibitem{Dieudonne}
 Dieudonné, J.:
 \emph{Éléments d'analyse}. Tome VII. Chapitre XXIII. Première partie. Cahiers Scientifiques,
 Fasc. XL. Gauthier-Villars, Paris, 1978.

\bibitem{FabesJerisonKenig}
 Fabes, E.B., Jerison, D., Kenig, C.E.:
 \emph{The Wiener test for degenerate elliptic equations},
  Ann. Inst. Fourier (Grenoble) \textbf{32} (1982), 151--182.

\bibitem{FabesKenigJerison}
 Fabes, E.B., Kenig, C.E., Jerison, D.:
 \emph{Boundary behavior of solutions to degenerate elliptic equations},
 Conference on harmonic analysis in honor of Antoni Zygmund, Vol. I, II (Chicago, Ill., 1981), 577--589,
 Wadsworth Math. Ser., Wadsworth, Belmont, CA, 1983.

\bibitem{FabesKenigSerapioni}
 Fabes, E.B., Kenig, C.E., Serapioni, R.P.:
 \emph{The local regularity of solutions of degenerate elliptic equations},
 Comm. Partial Differential Equations \textbf{7} (1982), 77--116.

 \bibitem{Fedii}
 Fedi$\breve{\textrm{\i}}$, V.S.:
 \emph{On a criterion for hypoellipticity}, Math. USSR Sb. \textbf{14} (1971), 15--45.

\bibitem{FeffermanPhong2}
 Fefferman, C., Phong, D.H.:
 \emph{Subelliptic eigenvalue problems},
 Wadsworth Math. Ser. (Chicago, Ill., 1981), Wadsworth, Belmont, CA, 1983, pp. 590--606.

\bibitem{FeffermanPhong}
 Fefferman, C., Phong, D.H.:
 \emph{The uncertainty principle and sharp Gårding inequalities},
  Comm. Pure Appl. Math. \textbf{34} (1981), 285--331.



\bibitem{FollandStein}
 Folland, G.B., Stein, E.M.:
 \emph{Estimates for the $\bar \partial_b$ complex and analysis on the Heisenberg group},
 Commun. Pure Appl. Math. \textbf{27} (1974), 429--522.


\bibitem{GilbargTrudinger}
 Gilbarg, D., Trudinger, N.S.:
 \emph{Elliptic partial differential equations of second order}.
 Reprint of the 1998 edition. Classics in Mathematics, Springer-Verlag, Berlin, 2001.

\bibitem{Gutierrez}
 Gutiérrez, C.E.:
 \emph{Harnack's inequality for degenerate Schrödinger operators},
 Trans. Amer. Math. Soc. \textbf{312} (1989), 403--419.


%
\bibitem{Hormander}
 H{\"o}rmander, L.:
 \emph{Hypoelliptic second order differential equations},
  Acta Math. \textbf{119} (1967), 147--171.

   \bibitem{HormanderLibro}
 H{\"o}rmander, L.:
 \emph{The analysis of linear partial differential operators. I. Distribution theory and Fourier analysis}. Second edition. Springer Study Edition. Springer-Verlag, Berlin, 1990.
%
\bibitem{JerisonSanchez-Calle}
 Jerison, D.S., S\'anchez-Calle, A.:
 \emph{Subelliptic, second order differential operators},
 in: Complex Analysis III, Proc. Spec. Year, College Park 1985-86, Lect. Notes Math. \textbf{1277} (1987) 46--77.

\bibitem{Jurdjevic}
 Jurdjevic, V.:
 \emph{Geometric control theory},
 Cambridge Studies in Advanced Mathematics, \textbf{52}.
 Cambridge University Press, Cambridge, 1997.


\bibitem{Kohn65}
 Kohn, J.J.:
 \emph{Boundaries of complex manifolds},
 Proc. Conf. Complex Analysis, Minneapolis 1964, Springer-Verlag: New York, 81--94 (1965).


\bibitem{Kohn}
 Kohn, J.J.:
 \emph{Hypoellipticity of some degenerate subelliptic operators},
 J. Funct. Anal. \textbf{159} (1998), 203--216.

\bibitem{KohnNirenberg}
 Kohn, J.J., Nirenberg, L.:
 \emph{Non-coercive boundary value problems},
 Comm. Pure Appl. Math. \textbf{18} (1965), 443--492.




 \bibitem{KusuokaStroock}
 Kusuoka, S., Stroock, D.:
 \emph{Applications of the Malliavin calculus. II},
 J. Fac. Sci. Univ. Tokyo Sect. IA Math. \textbf{32} (1985), 1--76.

\bibitem{lanco_maxprinc}
 Lanconelli, E.:
 \emph{Maximum principles and symmetry results in sub-Riemannian settings}.
  In: Symmetry for elliptic PDEs, 17--33, Contemp. Math., \textbf{528},
  Amer. Math. Soc., Providence, RI, 2010.



\bibitem{LoebWalsh}
 Loeb, P., Walsh, B.:
 \emph{The equivalence of Harnack's principle and Harnack's inequality in the
 axiomatic system of Brelot},
 Ann. Inst. Fourier (Grenoble) \textbf{15} (1965), 597--600.

\bibitem{lopez-Gomez}
 López-Gómez, J.:
 \emph{The strong maximum principle},
 in: Mathematical analysis on the self-organization and self-similarity, 113--123,
 RIMS Kôkyûroku Bessatsu, B15, Res. Inst. Math. Sci. (RIMS), Kyoto, 2009.


%
 \bibitem{Montel}
  Montel, P.:
  \emph{Le\c{c}ons sur les familles normales des fonctions analytiques et leurs applications},
  Gauthier-Villars, Paris, 1927.


\bibitem{Morimoto}
 Morimoto, Y.:
 \emph{A criterion for hypoellipticity of second order differential
 operators}, Osaka J. Math.
 \textbf{24} (1987), 651--675.


\bibitem{OleinikRadkevic}
 Ole\u{\i}nik, O.A.,  Radkevi\v{c}, E.V.:
 \emph{Second Order Differential Equations with Nonnegative Characteristic Form},
 Amer. Math. Soc., RI/Plenum Press, New York (1973).


%
\bibitem{PucciSerrin}
 Pucci, P., Serrin, J.:
 \emph{The Maximum Principle},
 Progress in Nonlinear Differential Equations and their Applications,
 \textbf{73}, Birkhäuser Verlag, Basel, 2007.

\bibitem{RothschildStein}
 Rothschild, L.P., Stein, E.M.:
 \emph{Hypoelliptic differential operators and nilpotent groups},
 Acta Math. \textbf{137} (1977), 247--320.



\bibitem{Stein}
 Stein, E.M.: \emph{An example on the Heisenberg group related to the Lewy operator},
 Invent. Math. \textbf{69} (1982), 209--216.


%
\bibitem{Treves}
 {Treves, F.}:
 \emph{Topological vector spaces, distributions and kernels},
 Academic Press, London, 1967.

 \bibitem{Zamboni}
  Zamboni, P.:
  \emph{Hölder continuity for solutions of linear degenerate elliptic equations under minimal assumptions},
 J. Differential Equations \textbf{182} (2002), 121--140.
%
\end{thebibliography}
\end{document}